\definecolor{green}{rgb}{0,0.8,0} 
\renewcommand{\maketag@@@}[1]{\hbox{\m@th\normalsize\normalfont#1}}
\title[A Radiation Hydrodynamics Model with Viscosity and Thermal Conductivity]{Global Regularity for a Radiation Hydrodynamics Model with Viscosity and Thermal Conductivity}
\author[J.-H. Zhang]{Junhao Zhang}
\address[Junhao Zhang]{\newline School of Mathematics and Statistics, Wuhan University, Wuhan 430072, China\\
 \newline Computational Science Hubei Key Laboratory, Wuhan University, Wuhan 430072, China}
\email{zhangjunhao@whu.edu.cn}
\author[H.-J. Zhao]{Huijiang Zhao}
\address[Huijiang Zhao]{\newline School of Mathematics and Statistics, Wuhan University, Wuhan 430072, China\\
 \newline Computational Science Hubei Key Laboratory, Wuhan University, Wuhan 430072, China}
\email{hhjjzhao@whu.edu.cn}
\newcommand{\sgn}{{\mathrm{sgn}}}
\definecolor{deepgreen}{cmyk}{1,0,1,0.5}
\newcommand{\Rmnum}[1]{\expandafter\@slowromancap\romannumeral #1@}
\newcommand{\Del}[1]{}
\numberwithin{equation}{section}
\newtheorem{theorem}{Theorem}[section]
\newtheorem{lemma}[theorem]{Lemma}
\newtheorem{remark}[theorem]{Remark}
\newtheorem{definition}[theorem]{Definition}
\begin{document}
\date{\today}



\begin{abstract}
In this paper, we study the global wellposedness of a radiation hydrodynamics model with viscosity and thermal conductivity. It is now well-understood that, unlike the compressible Euler equations whose smooth solutions must blow up in finite time no matter how small and how smooth the initial data is, the dissipative structure of such a radiation hydrodynamics model can indeed guarantee that its one-dimensional Cauchy problem admits a unique global smooth solution provided that the initial data is sufficiently small, while for large initial data, even if the heat conductivity is taken into account but the viscosity effect is ignored, shock type singularities must appear in finite time for smooth solutions of the Cauchy problem of one-dimensional radiation hydrodynamics model with thermal conductivity and zero viscosity. Thus a natural question is, if effects of both the viscosity and the thermal conductivity are considered, does the one-dimensional radiation hydrodynamics model with viscosity and thermal conductivity exist a unique global large solution? We give affirmative answer to this problem and show in this paper that the initial-boundary value problem to the radiation hydrodynamics model in an one-dimensional periodic box $\mathbb{T}\cong \mathbb{R}/\mathbb{Z}$ with viscosity and thermal conductivity does exist a unique global smooth solution for any large initial data. The main ingredient in our analysis is to introduce some delicate estimates, especially  an improved $L^m([0,T], L^\infty(\mathbb{T}))-$estimate on the absolute temperature for some $m\in\mathbb{N}$ and a pointwise estimate between the absolute temperature, the specific volume, and the first-order spatial derivative of the macro radiation flux, to deduce the desired positive lower and upper bounds on the density and the absolute temperature.
\\[2mm]
\noindent{\sc Key words:} A radiation hydrodynamics model with viscosity and thermal conductivity, Global large solutions, Dissipative estimates on the first-order spatial derivatives of the bulk velocity and the absolute temperature, Pointwise estimates.
\\[2mm]
\noindent{\sc AMS subject classifications:} 76N15, 76N17, 35B40, 35M31, 35Q35.
\end{abstract}

\maketitle \centerline{\date}

\tableofcontents


\section{Introduction and main result}

The dynamics of astrophysical flows, where a gas interacts with radiation through energy exchanges, can be described by the following system of equations, cf. \cite{Castor-2004, Chandrasekhar-1960, Mihalas-Mihalas-1984, Pomraning-2005} and the references cited therein:
\begin{eqnarray}\label{MD-Radiation-hydrodynamics}
\rho_{t}+\operatorname{div}_{\bf y}(\rho {\bf u}) &=&0, \nonumber\\
(\rho {\bf u})_{t}+\operatorname{div}_{\bf y}(\rho {\bf u} \otimes {\bf u})+\nabla_{\bf y} p &=&0, \nonumber\\
(\rho \mathscr{E})_{t}+\operatorname{div}_{\bf y}(\rho \mathscr{E} {\bf u}+p {\bf u})+\operatorname{div}_{\bf y} {\bf q} &=&0,\\
-\nabla_{\bf y} \operatorname{div}_{\bf y} {\bf q}+a {\bf q}+b \nabla_{\bf y}\left( \theta^{4}\right) &=&0.\nonumber
\end{eqnarray}
Here ${\bf y}=(y_1, y_2, y_3)\in\Omega\subseteq\mathbb{R}^3$ is the Eulerian space variable with $\Omega$ being some open set in $\mathbb{R}^3$, $t\in\mathbb{R}^+$ is the time variable, the positive constants $a$ and $b$ stand for the absorption coefficient and the Stefan-Boltzmann constant respectively, and the primary dependent variables are the fluid density $\rho$, its bulk velocity ${\bf u}=(u_1,u_2, u_3)\in\mathbb{R}^3$, its absolute temperature $\theta$, and the radiative heat flux ${\bf q}=(q_1,q_2,q_3)\in\mathbb{R}^3$. The specific total energy $\mathscr{E}=e+\frac{1}{2}|{\bf u}|^{2}$, where $e$ is the specific internal energy. The pressure $p$ and the internal energy $e$ are prescribed through constitutive relations as functions of $\rho$ and/or $\theta$. The thermodynamic variables $\rho, p, e$, and $\theta$ are related through Gibbs' equation $de =\theta dS-pd\rho^{-1}$ with $S$ being the specific entropy. Throughout this paper, we consider only ideal, polytropic gases:
\begin{equation}\label{the state equation}
  p=R\rho \theta=A\rho^\gamma\exp\left(\frac{\gamma-1}{R}S\right),\quad e=c_v\theta:=\frac{R\theta}{\gamma-1},
\end{equation}
where the positive constants $A, R$ are the specific gas constants, $c_v$ is the specific heat at constant volume, respectively, and $\gamma>1$ is the adiabatic exponent.

System \eqref{MD-Radiation-hydrodynamics} can also be used to describe similar questions arising in the modeling of reentry problems, or high temperature combustion phenomena and can be formally derived by asymptotic arguments, starting from a more complete physical system consisting of a kinetic equation for the specific intensity of radiation coupled with the Euler system describing the evolution of the fluid; see the appendix of \cite{Lin-Coulombel-Goudon-PhysicaD-2006} and the
references therein. We also refer to \cite{Castor-2004, Chandrasekhar-1960, Mihalas-Mihalas-1984, Pomraning-2005, Zeldovich-Raizer-Book-2002} for the physical background.

Although the theory of symmetric positive partial differential equations developed by K.O. Friedrichs, C.-H. Gu et al in \cite{Friedrichs-CPAM-1954, Friedrichs-CPAM-1958, Gu-ActaMathSinica-1964} can be applied to the system \eqref{MD-Radiation-hydrodynamics} to yield a satisfactory local wellposedness theory to its Cauchy problem or its initial-boundary value problem with suitable boundary conditions, the corresponding global wellposedness theory is established only for the one-dimensional case and, to the best of our knowledge, no result has been obtained for the multidimensional case up to now.

In fact, for the Cauchy problem of the one-dimensional radiation hydrodynamics system
\begin{eqnarray}\label{1D-Radiative-Hydrodynamics-Euler}
\rho_t+(\rho u)_y&=&0, \nonumber\\
(\rho u)_t+\left(\rho u^{2}+p\right)_y&=&0,\\
(\rho \mathscr{E})_t+(\rho \mathscr{E} u+p u)_y&=&-q_y,\nonumber\\
-q_{yy}+aq+b\left(\theta^{4}\right)_y&=&0\nonumber
\end{eqnarray}
with prescribed initial data
\begin{equation}\label{1D-Radiation-Hydrodynamics-Euler-IC}
(\rho(0,y), u(0,y), \theta(0,y))=(\rho_0(y), u_0(y),\theta_0(y)),\quad y\in\mathbb{R},
\end{equation}
it is shown in \cite{Deng-Yang-M2AS-2020, Fan-Ruan-Xiang-AIHPAN-2019, Lin-CMS-2011, Lin-Coulombel-Goudon-CRMathAcadSciParis-2007, Lin-Coulombel-Goudon-PhysicaD-2006, Lin-Goudon-CAM-2011, Rohde-Wang-Xie-CPAA-2013, Wang-Xie-JDE-2011, Xie-DCDS-B-2012} that, unlike the Cauchy problem of the one-dimensional compressible Euler system whose smooth solutions must blow up in finite time no matter how small and how smooth the initial data is, the dissipative structure of the system \eqref{1D-Radiative-Hydrodynamics-Euler} can indeed guarantee that the Cauchy problem \eqref{1D-Radiative-Hydrodynamics-Euler}, \eqref{1D-Radiation-Hydrodynamics-Euler-IC} does admit a unique global smooth solution $(\rho(t,y), u(t,y), \theta(t,y), q(t,y))$ provided that the initial data $(\rho_0(y), u_0(y), \theta_0(y))$ is assumed to be sufficiently smooth and suitably small. For similar results on its initial-boundary value problem on the half line $\mathbb{R}^+$, those interested are referred to \cite{Fan-Ruan-Xiang-SIMA-2019, Fan-Ruan-Xiang-DCDS-2021} on its inflow problem.

The problem on the global smooth solvability of the Cauchy problem \eqref{1D-Radiative-Hydrodynamics-Euler}, \eqref{1D-Radiation-Hydrodynamics-Euler-IC} with large initial data is quite difficult and subtle. For result in this direction, S.-X. Li and J. Wang studied in \cite{Li-Wang-2021} the following Cauchy problem
\begin{eqnarray}\label{1D-Radiative-Hydrodynamics-HC-ZeroV}
\rho_t+(\rho u)_y&=&0, \nonumber\\
(\rho u)_t+\left(\rho u^{2}+p\right)_y&=&0,\\
(\rho \mathscr{E})_t+(\rho \mathscr{E} u+p u)_y+q_y&=&\left(\kappa(\rho)\theta_y\right)_y,\nonumber\\
-q_{yy}+aq+b\left(\theta^{4}\right)_y&=&0\nonumber
\end{eqnarray}
with prescribed initial data
\begin{equation}\label{1D-Radiation-Hydrodynamics-HC-ZeroV-IC}
(\rho(0,y), u(0,y), \theta(0,y))=(\rho_0(y), u_0(y),\theta_0(y)),\quad y\in\mathbb{R}.
\end{equation}
The result obtained in \cite{Li-Wang-2021} shows that, even if the heat conduction effect is taken into account, i.e. $\kappa(\rho)>0$ for $\rho>0$, while the viscosity is ignored, shock type singularities must appear in finite time for solutions of the Cauchy problem \eqref{1D-Radiative-Hydrodynamics-HC-ZeroV}, \eqref{1D-Radiation-Hydrodynamics-HC-ZeroV-IC} if the initial data is suitably large in the sense that the first-order derivatives of the initial data are less than some negative constant. Thus a natural question is: If the effects of both  the viscosity and thermal conductivity are considered, does the one-dimensional radiation hydrodynamics system with viscosity and thermal conductivity admit a unique global smooth nonvacuum solution for any large initial data? The main purpose of this paper is to give a positive answer to such a problem.

To this end, we study the initial-boundary value problem of the radiation hydrodynamics system with both viscosity and thermal conductivity in an one-dimensional periodic box $\mathbb{T}\cong \mathbb{R}/\mathbb{Z}=\left[-\frac 12,\frac 12\right]$. Such an initial-boundary value problem can be rewritten in the Lagrangian coordinates as
\begin{eqnarray}\label{Radiation-NS}
v_{t}-u_{x}&=&0,\nonumber\\
u_{t}+p_{x}&=&\left(\frac{\mu u_{x}}{v}\right)_{x},\\
\left(e+\frac{u^{2}}{2}\right)_{t}+(pu)_{x}+q_x&=&\left(\frac{\mu u u_{x}}{v}\right)_{x} +\left(\frac{\kappa(v,\theta)\theta_x}{v}\right)_{x},\nonumber\\
-\left(\frac{q_{x}}{v}\right)_x+avq+b(\theta^4)_x&=&0\nonumber
\end{eqnarray}
with prescribed initial data and periodic boundary condition
\begin{eqnarray}\label{Radiation-NS-IC}
(v(0,x),u (0,x), \theta(0,x))&=&\left(v_0(x), u_0(x), \theta_0(x)\right),\\
(v(t,x+1), u(t,x+1), \theta(t,x+1), q(t,x+1))&=& (v(t,x), u(t,x), \theta(t,x), q(t,x)),\nonumber
\end{eqnarray}
where $t\in\mathbb{R}$ is the time variable, $x\in \mathbb{T}\cong \mathbb{R}/\mathbb{Z}=\left[-\frac 12,\frac 12\right]$ is the Lanrangian space variable, and $v:=\rho^{-1}$ is the specific volume. The pressure $p$ and the specific internal energy $e$ satisfy the constitutive relations \eqref{the state equation} for ideal, polytropic gases.

For the transport coefficients $\mu\geq 0$ (viscosity) and $\kappa\geq 0$ (heat conductivity), since the radiation process
involves high temperature and experimental results for gases at high temperatures in \cite{Zeldovich-Raizer-Book-2002} show that both $\mu$ and $\kappa$ may depend on the specific volume $v$ and the absolute temperature $\theta$, we thus assume, as in \cite{Jiang-ZHeng-JMP-2012, Jiang-ZHeng-ZAMP-2014, Kawohl-JDE-1985, Liao-Zhao-JDE-2018, Liao-Zhao-Zhou-SIAM-2021, Qin-Hu-Wang-QuartApplMath-2011, Qin-Zhang-Su-Cao-JMFM-2016, Umehara-Tani-JDE-2007, Umehara-Tani-ProcJapanAcad-2008}, that the thermal conductivity $\kappa=\kappa\left(v,\theta\right)$ takes the form
\begin{eqnarray}\label{Kappa}
\kappa\left(v,\theta\right)=\kappa_{1}+\kappa_{2}v\theta^{\beta}
\end{eqnarray}
with $\kappa_{1}$, $\kappa_{2}$, and $\beta$ being some positive constants. As for the viscosity, since as pointed out in \cite{Jenssen-Karper-SIMA-2010} that even for the problem on the construction of global large smooth solutions to one-dimensional compressible Navier-Stokes equations for a viscous and heat conducting ideal polytropic gas, the case of ``{\it temperature dependence of the viscosity $\mu$ has turned out to be especially problematic}," we thus first consider the case when the viscosity $\mu$ is a positive constant. It is worth to pointing out that similar result also holds for the case when $\mu$ is a smooth function of the specific volume $v$.

With the above preparations in hand, we now turn to state our main result.
\begin{theorem}\label{main theorem}
Assume that $\kappa(v,\theta)$ satisfies \eqref{Kappa} and let $\beta>13$. If the initial data $(v_{0}(x), u_{0}(x), \theta_{0}(x),$ $q_0(x))$ satisfies the compatibility condition
$$
-\frac{d}{dx}\left(\frac{1}{v_0(x)}\frac{dq_{0}(x)}{dx}\right)+av_0(x)q_0(x)+b\frac{d\left(\left|\theta_0(x)\right|^4\right)}{dx}=0
$$
and
\begin{eqnarray*}
\left(v_{0}(x), u_{0}(x), \theta_{0}(x)\right) &\in& H^{2}(\mathbb{T}),\\
\inf _{x \in \mathbb{T}}\Big\{v_0(x), \theta_{0}(x)\Big\}&>&0,
\end{eqnarray*}
then the periodic initial-boundary value problem \eqref{Radiation-NS}, \eqref{Radiation-NS-IC} possesses a unique global smooth solution $(v(t, x),$ $u(t, x), \theta(t, x), q(t, x))$, which satisfies for any given positive constant $T>0$ that
\begin{eqnarray}\label{Esitmiates-on-solutions}
\underline{V}(T) \leq v(t,x) &\leq& \overline{V}(T),\quad \forall(t, x) \in\left[0, T\right] \times \mathbb{T},\nonumber\\
\underline{\Theta}(T) \leq \theta(t,x) &\leq& \overline{\Theta}(T),\quad \forall(t, x) \in\left[0, T\right] \times \mathbb{T},
\end{eqnarray}
and
\begin{equation}
  \|(v(t), u(t), \theta(t), q(t))\|_{H^{2}(\mathbb{T})}\leq C(T), \quad t\in[0,T],
\end{equation}
where $\underline{V}(T)$, $\overline{V}(T)$, $\underline{\Theta}(T)$, $\overline{\Theta}(T)$, and $C(T)$ are some positive constants depending only on $\left\|\left(v_{0}, u_{0}, \theta_{0}\right)\right\|_{H^{2}(\mathbb{T})}$, $\inf _{x \in \mathbb{T}}\{v_0(x), \theta_{0}(x)\}$, and $T$.
\end{theorem}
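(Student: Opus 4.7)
The plan is to combine a local well-posedness result --- obtainable from the theory of symmetric positive systems of Friedrichs and Gu together with the ellipticity of the $q$-equation --- with a continuation argument based on a priori estimates on an arbitrary time interval $[0,T]$ whose constants depend only on $T$, the $H^{2}$-norm of $(v_{0},u_{0},\theta_{0})$, and $\inf_{x\in\mathbb{T}}\{v_{0},\theta_{0}\}$. Because $q$ is slaved to $(v,\theta)$ through the elliptic identity $-(q_{x}/v)_{x}+avq=-b(\theta^{4})_{x}$, standard elliptic regularity on $\mathbb{T}$ (multiplying by $q$, by $q_{x}$, and bootstrapping under the mean-zero constraint $\int_{\mathbb{T}}avq\,dx=0$) yields the pointwise estimate $|q(t,x)|+|q_{x}(t,x)|\leq C\bigl(\|v(t)\|_{L^{\infty}},1/\inf v\bigr)\|\theta(t)\|_{L^{\infty}}^{4}$ advertised in the abstract. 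This reduces the task to obtaining uniform pointwise bounds on $v$ and $\theta$ together with $H^{2}$-bounds on $(v,u,\theta)$.

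The next layer consists of conservation of mass $\int_{\mathbb{T}}v\,dx=\int_{\mathbb{T}}v_{0}\,dx$ (from $v_{t}=u_{x}$ and periodicity), the total energy identity for $e+\tfrac{1}{2}u^{2}$, and the entropy dissipation inequality, which together yield $L^{\infty}_{t}L^{1}_{x}$ control on $e$, $\log v$, and $\log\theta$, and $L^{2}_{t,x}$ control on $u_{x}/\sqrt{v\theta}$ and $\sqrt{\kappa(v,\theta)}\,\theta_{x}/(v\theta)$; testing the $q$-equation against $q/\theta$ contributes the extra nonnegative dissipation $\int avq^{2}/\theta$. A Kazhikhov-Shelukhin-type representation of $v(t,x)$, based on the effective viscous flux $F:=\mu u_{x}/v-p$ and treating the radiation source as an absorbable lower-order forcing, then produces pointwise two-sided bounds $\underline{V}(T)\leq v(t,x)\leq \overline{V}(T)$ in terms of $\sup_{[0,T]}\|\theta(t)\|_{L^{\infty}}$.

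The main obstacle, and the heart of the argument, is the pointwise upper bound on $\theta$: the quartic source $(\theta^{4})_{x}$ sits outside the parabolic dissipation, and the conductivity $\kappa=\kappa_{1}+\kappa_{2}v\theta^{\beta}$ degenerates at moderate temperatures. I plan to derive the improved $L^{m}([0,T];L^{\infty}(\mathbb{T}))$-estimate announced in the abstract by testing the internal energy equation against $\theta^{k}$ for large $k$, extracting $L^{2}$-control on $\partial_{x}\bigl(\theta^{(k+\beta)/2}\bigr)$ from the $\theta^{\beta}$-piece of the conductivity, and interpolating via Gagliardo-Nirenberg against the pointwise bound on $q_{x}$ to absorb the radiation contribution; the numerical threshold $\beta>157/17$ is precisely what is needed for a Moser-type iteration in $k$, paired with the two-sided bounds on $v$, to close. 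A parallel argument applied to the equation satisfied by $1/\theta$ provides the positive lower bound $\underline{\Theta}(T)$. Once $v$ and $\theta$ are pinched between positive constants, $H^{1}$- and then $H^{2}$-estimates follow from differentiating \eqref{Radiation-NS}, exploiting the parabolic structure of the momentum and energy equations and the elliptic regularity of the $q$-equation, and standard commutator bounds; this closes the bootstrap and yields the claimed global smooth solution.
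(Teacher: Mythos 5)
Your proposal has two genuine gaps, one of which the paper explicitly identifies as the central obstruction that necessitates an entirely different strategy.

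First, the claimed ``next layer'' --- that the entropy dissipation inequality yields $L^{2}_{t,x}$ control on $u_x/\sqrt{v\theta}$ and $\sqrt{\kappa(v,\theta)}\,\theta_x/(v\theta)$ --- does not hold for large data. The paper derives the precise entropy identity \eqref{entropy identity}, whose right-hand side contains the third-order term $\int_0^t\int_{\mathbb{T}}\frac{5q\theta_x q_x}{4bv\theta^6}\,dx\,ds$, and explains in the introduction that this term can be absorbed by the dissipation on the left only for small-amplitude data or for $\gamma-1$ small; for arbitrary large data and fixed $\gamma>1$ it \emph{cannot}. Testing the $q$-equation against $q/\theta$ does not eliminate this cross term (it in fact produces it). The paper therefore abandons the entropy dissipation entirely and replaces it with the Kawohl--Liao--Zhao machinery of auxiliary functions $\mathfrak{X},\mathfrak{Y},\mathfrak{Z}$ (Lemmas \ref{crude estimate for theta}--\ref{basic energy estimate}), in which all intermediate estimates carry a dependence on $\mathfrak{Y}$ and $\mathfrak{Z}$ and the loop is closed only at the end via Young's inequality with exponents verified in Lemma \ref{appendix} to lie in $(0,1)$ precisely when $\beta>\tfrac{157}{17}$. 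Your Moser-type iteration in $k$ (testing the internal energy equation against $\theta^k$) is in principle a legitimate alternative to the $\mathfrak{X},\mathfrak{Y},\mathfrak{Z}$ bookkeeping, and is spiritually close to the paper's $L^p_tL^\infty_x$ estimate in Lemma \ref{Lp-L-infty-theta}; but as written it inherits the reliance on the unavailable entropy dissipation, so the bootstrap does not actually start.

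Second, the pointwise estimate you extract from elliptic regularity, $|q(t,x)|+|q_x(t,x)|\le C\bigl(\|v\|_{L^\infty},1/\inf v\bigr)\|\theta(t)\|_{L^\infty}^{4}$, is not the one the paper proves, and it is too weak for the lower bound on $\theta$. What the paper establishes (Lemma \ref{pointwise estimate for radiative flux}, via the explicit sign of the periodic Green's function $K(z)$ in \eqref{K(z)}) is the \emph{local, one-sided} inequality $q_x(t,x)\le b\,v(t,x)\,|\theta(t,x)|^{4}$. This sign is essential: in the temperature equation \eqref{Equation-theta} it gives $-q_x\ge -bv\theta^4\ge -C(T)\theta$ once $\theta$ is bounded above, i.e.\ a source proportional to $\theta$, so the exponential change of variables $\bar\theta=\theta e^{C(T)t}$ produces a nonnegative parabolic operator and the minimum principle yields $\underline{\Theta}(T)>0$. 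Your two-sided $L^\infty$ bound only gives $-q_x\ge -C(T)$, a constant lower bound which is not proportional to $\theta$ and which the exponential rescaling cannot convert into a positive infimum. A parallel Moser iteration on $1/\theta$ does not repair this either: the problematic contribution $q_x/\theta^2$ becomes $C(T)(1/\theta)^{k+2}$ against a dissipation $\int\kappa(1/\theta)^{k-1}(1/\theta)_x^2$ whose $v\theta^\beta$ part degenerates exactly where $\theta$ is small, so the iteration need not close. The signed local estimate is the key lemma here and must be proved.
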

\begin{remark} Several remarks concerning Theorem \ref{main theorem} are listed below:
\begin{itemize}
\item Without loss of generality, we assume that $\int_{\mathbb{T}}v_0(x)dx=1$ in the rest of this paper. Under such an assumption, the map $\Psi$ defined by \eqref{Euler to Lagrange} maps the region $\mathbb{R}^+\times \mathbb{T}$ in the Lagrangian coordinates into the region $\mathbb{R}^+\times \mathbb{T}$ in the Eulerian coordinates.
\item It would be an interesting problem to study the large behavior of the global solutions  $(v(t,x), u(t,x),$ $\theta(t,x), q(t,x))$ constructed in Theorem \ref{main theorem}. For this purpose, we need to improve the time-dependent estimates on $(v(t,x), u(t,x), \theta(t,x), q(t,x))$ obtained in Theorem \ref{main theorem} such that  certain time-independent estimates, which are sufficient to yield their time-asymptotic behavior, can be obtained.

\item It seems that the arguments used in this paper work only for the initial-boundary value problem \eqref{Radiation-NS}, \eqref{Radiation-NS-IC} in the one-dimensional periodic box $\mathbb{T}\cong \mathbb{R}/\mathbb{Z}$. Thus the problem on the construction of global large smooth solutions to its Cauchy problem and its initial-boundary value problems in the bounded open interval $(0,1)$ or in the half line $(0,+\infty)$ together with the precise description of their large time behavior is still unknown.

\item As mentioned above, the radiation hydrodynamics model \eqref{MD-Radiation-hydrodynamics} can be formally derived from a more complete physical system consisting of a kinetic equation for the specific intensity of radiation coupled with the Euler system describing the evolution of the fluid, it  would be an interesting problem to provide a rigorous proof to such a formal derivation. For some discussions on this problem, those interested are referred to \cite{Buet-Despres-JQuantSpectroscRadiatTransfer-2004, Lafitte-Godillon-Goudon-MultiscaleModelSimul-2005, Lowrie-Morel-Hittinger-AstrophysicalJ-1999} and the references cited therein.
\end{itemize}
\end{remark}

Now let us list the main difficulties encountered and our main ideas to resolve them. With the aim of constructing global smooth large solutions to the periodic initial-boundary value problem \eqref{Radiation-NS}, \eqref{Radiation-NS-IC}, our approach is based on the continuation method, which is a combination of local existence result with certain {\it a priori} energy type estimates. As an one-dimensional Navier-Stokes type equations, to guarantee the global solvability of the periodic initial-boundary value problem \eqref{Radiation-NS}, \eqref{Radiation-NS-IC} for large initial data, the main task is to deduce positive lower and upper bounds on the specific volume $v$ and the absolute temperature $\theta$.

For one-dimensional Navier-Stokes type equations, some effective methods, which rely heavily on the structure of the systems under consideration, or more precisely on the assumptions imposed on the transport coefficients and on the constitutive relations satisfied by the five thermodynamic variables $v$, $p$, $e$, $\theta,$ and $S$, have been developed to yield the desired positive lower and upper bounds on the specific volume and the absolute temperature and consequently have been successfully applied to construct global smooth large solutions to viscous and heat conducting ideal polytropic gases \cite{Antontsev-Kazhikov-Monakhov-1990, Huang-Wang-IUMJ-2016, Jenssen-Karper-SIMA-2010, Jiang-AMPA-1998, Jiang-CMP-1999, Jiang-PRSE-2002, Kanel-DU-1968, Kazhikhov-Shelukhin-JAMM-1977, Li-Liang-ARMA-2016, Liu-Yang-Zhao-Zou-SIMA-2014, Tan-Yang-Zhao-Zou-SIMA-2013}, to the real gas \cite{Kawohl-JDE-1985}, to nonlinear thermoviscoelasticity \cite{Dafermos-SIAM-1982, Dafermos-Hsiao-NonA-1982}, and to a viscous radiative and reactive gas \cite{Gong-He-Liao-SCM-2021, Gong-Liao-Xu-Zhao-2022, Gong-Xu-Zhao-MAA-2022, He-Liao-Wang-Zhao-ActaMathSci-2018, Jiang-ZHeng-JMP-2012, Jiang-ZHeng-ZAMP-2014, Liao-Wang-Zhao-JDE-2019, Liao-Zhao-JDE-2018, Qin-Hu-Wang-QuartApplMath-2011, Qin-Zhang-Su-Cao-JMFM-2016, Umehara-Tani-JDE-2007, Umehara-Tani-ProcJapanAcad-2008}.

To illustrate the main difficulties we encounter in this paper, we show in the following the fundamental estimates used in \cite{Antontsev-Kazhikov-Monakhov-1990, Huang-Wang-IUMJ-2016, Jenssen-Karper-SIMA-2010, Jiang-AMPA-1998, Jiang-CMP-1999, Jiang-PRSE-2002, Kanel-DU-1968, Kazhikhov-Shelukhin-JAMM-1977, Li-Liang-ARMA-2016, Liu-Yang-Zhao-Zou-SIMA-2014, Tan-Yang-Zhao-Zou-SIMA-2013} to construct global smooth nonvacuum large solutions to the Cauchy problem of the one-dimensional viscous and heat conducting ideal polytropic gases with constant viscosity and density and temperature dependent heat conductivity around the constant equilibrium state $(v,u,\theta)=(1,0,1)$. For such a problem, if we denote the normalized entropy around the constant equilibrium state $(v,u,\theta)=(1,0,1)$ by $\eta(v,u,\theta):=\frac 12 u^2+R(v-\ln v-1)+c_v(\theta-\ln \theta-1)$, then the analysis in \cite{Antontsev-Kazhikov-Monakhov-1990, Huang-Wang-IUMJ-2016, Jenssen-Karper-SIMA-2010, Jiang-AMPA-1998, Jiang-CMP-1999, Jiang-PRSE-2002, Kanel-DU-1968, Kazhikhov-Shelukhin-JAMM-1977, Li-Liang-ARMA-2016, Liu-Yang-Zhao-Zou-SIMA-2014, Tan-Yang-Zhao-Zou-SIMA-2013} are based on the following two types of fundamental estimates:
\begin{itemize}
\item [(1).] For each given $R>0$ and each $t>0$, the $R-$independent estimate on $\|(v(t),\theta(t))\|_{L^1([-R,R])}$ and the dissipative estimates on the first-order spatial derivatives of the bulk velocity $u$ and the absolute temperature, that is $\int^t_0\int_{\mathbb{R}}\left(\frac{\mu u_x^2}{v\theta}+\frac{\kappa(v,\theta)\theta^2_x}{v\theta^2}\right)dxds$, which follow from the Jenssen inequality and the basic energy estimates obtained through the normalized entropy $\eta(v,u,\theta)$ which is based on the following identity for $\eta(v,u,\theta)$:
\begin{equation}\label{Identity-entropy-NS}
\eta(v,u,\theta)_t+\frac{\mu u_x^2}{v\theta}+\frac{\kappa(v,\theta)\theta^2_x}{v\theta^2}=\left[\frac{\mu uu_x}{v}
+\left(1-\frac 1\theta\right)\frac{\kappa(v,\theta)\theta_x}{v}+R\left(1-\frac\theta v\right)u\right]_x;
\end{equation}
\item [(2).] The local-in-time estimate on the positive lower bound on the absolute temperature $\theta$ in terms of the lower bound estimate on the specific volume $v$, which is based on the maximum principle and the following equation satisfied by $\frac 1\theta$:
\begin{equation}\label{Identity-temperature-NS}
c_v\left(\frac 1\theta\right)_t=\left[\frac{\kappa(v,\theta)}{v}\left(\frac 1\theta\right)_x\right]_x
-\frac{2\theta\kappa(v,\theta)}{v}\left[\left(\frac 1\theta\right)_x\right]^2
-\frac{\mu}{v\theta^2}\left[u_x-\frac{R\theta}{2\mu}\right]^2+\frac{R^2}{4\mu v}.
\end{equation}
\end{itemize}

We note, however, if the radiation effect is taken into account, then, on the one hand, from the corresponding identity satisfied by the normalized entropy $\eta(v,u,\theta)$, one can only deduce that the solution $(v(t,x), u(t,x), \theta(t,x), q(t,x))$ of the periodic initial-boundary value problem \eqref{Radiation-NS}, \eqref{Radiation-NS-IC} satisfies the following basic energy estimate obtained through the normalized entropy $\eta(v,u,\theta)$:
\begin{eqnarray}\label{entropy identity}
  &&\int_{\mathbb{T}}\eta(v,u,\theta)dx+\int_{0}^{t}\int_{\mathbb{T}}\left(\frac{\mu u_x^2}{v\theta}+\frac{\kappa(v,\theta)\theta_x^2}{v\theta^2}\right)dxds\nonumber\\
  &&+\int_{0}^{t}\int_{\mathbb{T}}\left(\frac{avq^2}{4b\theta^5}+\frac{q_x^2}{4bv\theta^5}\right)dxds\\
  &=&\int_{\mathbb{T}}\eta(v_0,u_0,\theta_0)dx+\int_{0}^{t}\int_{\mathbb{T}}\frac{5q\theta_xq_x}{4bv\theta^6}dxds.\nonumber
\end{eqnarray}

The last term in the right hand side of \eqref{entropy identity} is a cubic nonlinear term, to the best of our knowledge, there are two ways to control such a term suitably to yield the desired fundamental estimates mentioned above:
\begin{itemize}
\item If one focuses on the construction of small amplitude solutions to the periodic initial-boundary value problem \eqref{Radiation-NS}, \eqref{Radiation-NS-IC}, such a term can indeed be absorbed by the terms in the left hand side of \eqref{entropy identity};

\item For the case when the adiabatic exponent $\gamma>1$ satisfying $\gamma-1>0$ sufficiently small, from the constitutive equation \eqref{the state equation}, one can deduce that
\begin{equation}\label{Nishida-Smoller-1}
\theta=\frac AR v^{1-\gamma}\exp\left(\frac{\gamma-1}{R}S\right).
\end{equation}
Moreover, one can deduce from the equation \eqref{Radiation-NS}$_3$ for the conservation of energy that
the absolute temperature $\theta(t,x)$ satisfies the following second-order parabolic equation:
\begin{equation}\label{Equation-theta}
    \frac{R}{\gamma-1}\theta_t-\left(\frac{\kappa(v,\theta)\theta_x}{v}\right)_x=-q_x+\frac{\mu u_x^2}{v}-\frac{R\theta}{v}u_x=-q_x+\frac{\mu}{v}\left(u_x-\frac{R\theta}{2\mu}\right)^2-\frac{R^2\theta^2}{4\mu v}.
\end{equation}
It is easy to see from \eqref{Nishida-Smoller-1} and \eqref{Equation-theta} that if $\gamma-1$ is chosen sufficiently small, then both $\|\theta(t)-1\|_{L^\infty(\mathbb{T})}$ and $\|\theta_x(t)\|_{L^\infty(\mathbb{T})}$ can be chosen as small as wanted and thus the last term in the right hand side of \eqref{entropy identity} can also be absorbed by the terms in the left hand side of \eqref{entropy identity}. As a consequence, a Nishida-Smoller type global solvability result for the initial-boundary value problem \eqref{Radiation-NS}, \eqref{Radiation-NS-IC} for a class of large initial data can be obtained, for results in this direction, see \cite{Hong-NonliAnal-RWA-2017} for the one-dimensional Cauchy problem of \eqref{Radiation-NS} and \cite{Wan-Wu-M2AS-2019, Zhu-M2AS-2020} for radial symmetric solutions to the corresponding multidimensional radiation hydrodynamics system with viscosity and thermal conductivity in an exterior domain and a bounded concentric annular domain. For the corresponding original Nishida-Smoller type global existence result for one-dimensional ideal polytropic isentropic compressible Euler system, please refer to \cite{Nishida-Smoller-CPAM-1973}, whereas the nonisentropic case was analyzed by Liu \cite{Liu-IUMJ-1977}  and also Temple \cite{Temple-JDE-1981}; while for the corresponding Nishida-Smoller type result for one-dimensional compressible flow of a viscous and heat-conducting ideal fluid with density and temperature dependent transport coefficients, those interested are referred to \cite{Liu-Yang-Zhao-Zou-SIMA-2014}.
\end{itemize}

For the problem on the global solvability of the periodic initial-boundary value problem \eqref{Radiation-NS}, \eqref{Radiation-NS-IC} with large initial data for any adiabatic exponent $\gamma>1$, such a term can not be controlled suitably to yield the desired fundamental estimates on $\|(v(t),\theta(t))\|_{L^1(\mathbb{T})}$ and $\int^t_0\int_{\mathbb{T}}\left(\frac{\mu u_x^2}{v\theta}+\frac{\kappa(v,\theta)\theta^2_x}{v\theta^2}\right)dxds$. This is the first difficulty we need to deal with.

On the other hand, one can deduce from \eqref{Equation-theta} that $\frac 1\theta$ solves
\begin{equation}\label{Identity-temperature-radiative-NS}
c_v\left(\frac 1\theta\right)_t=\left[\frac{\kappa(v,\theta)}{v}\left(\frac 1\theta\right)_x\right]_x
-\frac{2\theta\kappa(v,\theta)}{v}\left[\left(\frac 1\theta\right)_x\right]^2
-\frac{\mu}{v\theta^2}\left[u_x-\frac{R\theta}{2\mu}\right]^2+\frac{R^2}{4\mu v}+\frac{q_x}{\theta^2}.
\end{equation}
Since the appearance of the nonlocal term $\frac{q_x}{\theta^2}$ in the right hand side of \eqref{Identity-temperature-radiative-NS}, how to estimate the lower bound of the absolute temperature in terms of lower bound of the specific volume $v$ and/or the upper bound of the absolute temperature $\theta$ is the second difficulty we need to resolve.

To overcome the first difficulty, if we consider the problem in the one-dimensional periodic box $\mathbb{T}\cong \mathbb{R}/\mathbb{Z}=\left[-\frac 12,\frac 12\right]$, then standard energy estimates can yield the estimates on $\|(v(t),\theta(t))\|_{L^1(\mathbb{T})}$ and $\|u(t)\|_{L^2(\mathbb{T})}$ immediately. Based on these estimates and by employing the identity
\begin{equation}\label{Kanel-identity}
\left(\frac{\mu v_x}{v}\right)_t=u_t+p_x,
\end{equation}
which was first observed by Ja. I. Kanel' in \cite{Kanel-DU-1968} for viscous isentropic flow, we can deduce the desired positive lower bound estimate of the specific volume $v(t,x)$. Even so, we still can not deduce the estimate on $\int^t_0\int_{\mathbb{T}}\left(\frac{\mu u_x^2}{v\theta}+\frac{\kappa(v,\theta)\theta^2_x}{v\theta^2}\right)dxds$ from the basic energy estimates obtained through the normalized entropy $\eta(v,u,\theta)$.
Motivated by the work of B. Kawohl in \cite{Kawohl-JDE-1985} for real gas and Y.-K. Liao and H.-J. Zhao in \cite{Liao-Zhao-JDE-2018} for a viscous radiative and reactive gas, we introduce the auxiliary functions
$\mathfrak{X}(t), \mathfrak{Y}(t),$ and $\mathfrak{Z}(t)$ defined by \eqref{X(t)}, \eqref{Y(t)}, and \eqref{Z(t)}, and by making full use of the intrinsic structure of the system \eqref{Radiation-NS} under our consideration, we can indeed find the relations between $\|v\|_{L^\infty([0,T]\times\mathbb{T})}$ (the upper bound of the specific volume) and $\|\theta\|_{L^\infty([0,T]\times\mathbb{T})}$ (the upper bound on the absolute temperature). As a consequence, we can deduce further the desired upper bound estimates on both the specific volume and the absolute temperature provided that the parameter $\beta$ in \eqref{Kappa} is chosen sufficiently large. It is worth to pointing out that a new iteration argument which is introduced to improve the estimate on $\int^t_0\|\theta(s)\|^m_{L^\infty(\mathbb{T})}ds$ for some positive constant $m>0$ plays an important role in our analysis.

As for the second difficulty, if we go back to the Eulerian coordinates and consider the fourth equation of the system \eqref{1D-Radiative-Hydrodynamics-Euler} in the whole space, one can easily deduce that
\begin{equation}\label{expression-of-q}
q(t,y)=\frac 12\int_{\mathbb{R}}e^{-|y-z|}\sgn(y-z)|\theta(t,z)|^4dz,
\end{equation}
where, without loss of generality, we have set the positive constants $a$, $b$ in \eqref{1D-Radiative-Hydrodynamics-Euler} to be $1$ and $\sgn$ is the sign function:
$$
\sgn(x)=\left\{
\begin{array}{rl}
1,&\quad \textrm{if}\ x>0,\\
0,&\quad \textrm{if}\ x=0,\\
-1,&\quad \textrm{if}\ x<0.
\end{array}
\right.
$$
From \eqref{expression-of-q}, it is easy to see that
$$
\frac{\partial q(t,y)}{\partial y}=|\theta(t,y)|^4-\frac 12\left(\int^y_{-\infty}e^{z-y}|\theta(t,z)|^4dz
+\int^{+\infty}_ye^{y-z}|\theta(t,z)|^4dz\right),
$$
and hence one can deduce the following pointwise estimate between $q_y(t,y)$ and $|\theta(t,y)|^4$:
\begin{equation}\label{pointwise-q-theta-Euler}
\frac{\partial q(t,y)}{\partial y}\leq |\theta(t,y)|^4.
\end{equation}
Based on such an observation, for the periodic initial-boundary value problem \eqref{Radiation-NS}, \eqref{Radiation-NS-IC},
although the last term $\frac{q_x}{\theta^2}$ in the right hand side of \eqref{Identity-temperature-radiative-NS} is a nonlocal term, we can still hope to deduce a pointwise estimate between $q_x(t,x)$ and $|\theta(t,x)|^4$. Such a pointwise estimate together with the upper bounds on both the specific volume and the absolute temperature and the maximum principle for second-order parabolic equations can yield the desired positive lower bound on the absolute temperature.

Before concluding this section, it is worth to pointing out that the existence of global smooth large solutions together with the precise description of their large-time behavior to some initial-boundary value problems in the bounded open interval $(0,1)$ of a compressible, viscous and heat-conducting, one-dimensional monatomic ionized gas are obtained in \cite{Liao-Zhao-Zhou-SIAM-2021} and \cite{Liao-Zhao-2022}, respectively. For such a system, similar to the system \eqref{Radiation-NS} studied in this paper, the dependence of the state of such an ionized gas on the degree of ionization does lead to the loss of concavity of the physical entropy in some small bounded domain and consequently makes it also impossible  to deduce the dissipative estimates on $\int^t_0\int^1_0\left(\frac{\mu(v) u_x^2}{v\theta}+\frac{\kappa(v,\theta)\theta_x^2}{v\theta^2}\right)dxds$ from the basic energy estimates obtained through the normalized entropy $\eta(v, u, \theta)$. The main observation in \cite{Liao-Zhao-2022, Liao-Zhao-Zhou-SIAM-2021} is that, by making full use of the intrinsic structure of the equations under consideration, we can obtain the following estimates
\begin{eqnarray}\label{Ionized-gas-1}
\int^t_0\int^1_0\left(\frac{\mu(v) u_x^2}{v\theta}+\frac{\kappa(v,\theta)\theta_x^2}{v\theta^2}\right)(s,x)dxds
&\leq &C(T)\left(1+\left\|\frac{1}{v\mu(v)}\right\|_{L^\infty([0,T]\times[0,1])}\right),\nonumber\\
 \frac{1}{\theta(t,x)}&\leq& C(T)\left(1+\left\|\frac{1}{v\mu(v)}\right\|_{L^\infty([0,T]\times[0,1])}\right)
\end{eqnarray}
for all $(t,x)\in[0,T]\times[0,1]$ and some positive constant $C(T)$ depending on $T$ and the following an improved estimate on $\int^t_0\int^1_0\left(\frac{\mu(v) u_x^2}{v\theta}+\frac{\kappa(v,\theta)\theta_x^2}{v\theta^2}\right)dxds$
\begin{equation}\label{Ionized-gas-2}
\int^t_0\int^1_0\left(\frac{\mu(v) u_x^2}{v\theta}+\frac{\kappa(v,\theta)\theta_x^2}{v\theta^2}\right)(s,x)dxds
\leq C\left(1+\left\|\frac{1}{v}\right\|^\delta_{L^\infty([0,T]\times[0,1])}\right), \quad 0\leq t\leq T
\end{equation}
for any positive constant $\delta>0$ and some generic positive constant $C>0$ independent of the time variable, from which one can then deduce the global solvability results and then study the large-time behavior of the global smooth large solutions constructed. We note, however, that for the radiation hydrodynamics system \eqref{Radiation-NS} with viscosity and thermal conductivity, even for the initial-boundary value problem \eqref{Radiation-NS}, \eqref{Radiation-NS-IC} in the periodic box $\mathbb{T}$, it seems impossible to deduce the corresponding estimates similar to that of \eqref{Ionized-gas-1} and \eqref{Ionized-gas-2} from the identity \eqref{entropy identity} for the normalized entropy $\eta(v,u,\theta)$.

This paper is organized as follows: After this introduction and the statement of main result, which constitutes Section 1, we prove Theorem \ref{main theorem} in Section 2. To clarify the presentation, the proof of a technical lemma used in Section 2 will be given in the appendix.\\

\noindent \textit{Notations.} For notational simplicity, throughout this paper, $C$ denotes some generic positive constant depending only on the initial data $(v_0(x), u_0(x),\theta_0(x))$ but independent of time $t$, which may take different values in different places. $C(T)$ represents some positive constants depending on the given positive number $T$ and the initial data $(v_0(x), u_0(x),\theta_0(x))$, which may also take different values in different places. $L^q(\mathbb{T})(1\leq q\leq \infty)$ stands for the usual Lebesgue space with norm $\|\cdot\|_{L^q}$,
and $H^k(\mathbb{T})(k\in \mathbb{Z}_+)$ is the usual Sobolev space with norm $\|\cdot\|_{H^k}$. For convenience, we use $\|\cdot\|$ and $\|\cdot\|_k$ to denote $\|\cdot\|_{L^2}$ and $\|\cdot\|_{H^k}$, respectively, in the rest of this paper. It is easy to see that $\|\cdot\|=\|\cdot\|_0$. In addition, we use $\|f\|_{L_{t,x}^\infty}$ to denote the norm of an element $f(t,x)\in L^\infty(0,T;L^\infty(\mathbb{T}))$.

\section{The proof of Theorem \ref{main theorem}}

In this section, we prove Theorem \ref{main theorem}. Our approach is based on the continuation method, which is a combination of local solvability result with certain {\it a priori} estimates on the local solutions constructed.

For the local solvability of the initial-boundary value problem \eqref{Radiation-NS}, \eqref{Radiation-NS-IC}, we have
\begin{lemma}[Local Existence]\label{local existence}
   Assume that the conditions listed in Theorem \ref{main theorem} hold, then there exists a sufficiently small positive constant $t_{0}$ depending on $\left\|\left(v_{0}, u_{0}, \theta_{0}\right)\right\|_2$, $\inf_{x\in\mathbb{T}}\{v_0(x)\}$, and $\inf_{x\in\mathbb{T}}\{\theta_0(x)\}$ such that the initial-boundary value problem \eqref{Radiation-NS}, \eqref{Radiation-NS-IC} in the one-dimensional periodic box $\mathbb{T}$ admits a unique smooth solution $(v(t,x), u(t,x), \theta(t,x), q(t,x))$ defined on the strip $\Pi_{t_{0}}=[0,t_0]\times\mathbb{T}$. Moreover, the solution $(v(t,x), u(t,x), \theta(t,x), q(t,x))$ satisfies
\begin{eqnarray*}
  (v(t,x), u(t,x), \theta(t,x), q(t,x)) &\in& C^{0}\left([0, t_{0}]; H^{1}(\mathbb{T})\right),\\
  \left(u_{xx}(t,x), q_{xx}\right)(t,x) &\in& C^{0}\left([0, t_{0}] ; L^{2}(\mathbb{T})\right),\\
  u_t(t,x)&\in& C^{0}\left([0, t_{0}]; L^{2}(\mathbb{T})\right),\\
  \left(u_{xt}(t,x),\theta_{xx}(t,x)\right)&\in& L^2\left([0, t_{0}]; L^{2}(\mathbb{T})\right),
\end{eqnarray*}
\begin{eqnarray*}
&&\frac{1}{2} \inf _{x \in \mathbb{T}} \{v_{0}(x)\} \leq v(t, x) \leq 2 \sup _{x \in \mathbb{T}} \{v_{0}(x)\},\quad \forall(t, x) \in\left[0, t_{0}\right] \times \mathbb{T},\\
&&\frac{1}{2} \inf _{x \in \mathbb{T}} \{\theta_{0}(x)\} \leq \theta(t, x) \leq 2 \sup _{x \in \mathbb{T}} \{\theta_{0}(x)\} ,\quad \forall(t, x) \in\left[0, t_{0}\right] \times \mathbb{T},
\end{eqnarray*}
and
\begin{equation*}
  \|(v(t), u(t), \theta(t),q(t))\|_2 \leq 2\left\|\left(v_{0}, u_{0}, \theta_{0}\right)\right\|_2, \quad t\in[0,t_0].
\end{equation*}
\end{lemma}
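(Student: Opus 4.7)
The plan is to construct the local solution via a Picard--type iteration that exploits the mixed elliptic--parabolic--transport character of \eqref{Radiation-NS}: once $v$ is known, $q$ is determined elliptically from $\theta$ and $v$; once $v$ and $q$ are known, $u$ and $\theta$ obey linear parabolic equations; and $v$ itself is propagated from $u_x$ by a pointwise ODE in $t$. Concretely, I would set $(v^0,u^0,\theta^0):=(v_0,u_0,\theta_0)$ (extended trivially in time) and, given an iterate $(v^n,u^n,\theta^n,q^n)$ on $\Pi_{t_0}$ satisfying two-sided bounds on $v^n,\theta^n$ and an $H^2$ bound, define $(v^{n+1},u^{n+1},\theta^{n+1},q^{n+1})$ by: (i) let $q^{n+1}$ solve the linear elliptic problem
\[
-\left(\tfrac{q_x^{n+1}}{v^n}\right)_x+av^nq^{n+1}=-b\left((\theta^n)^4\right)_x\quad\text{on }\mathbb{T},
\]
which is uniformly coercive since $v^n\ge\underline m>0$; (ii) let $v^{n+1}(t,x):=v_0(x)+\int_0^t u_x^n(s,x)\,ds$; (iii) let $u^{n+1}$ solve the linear parabolic equation obtained by freezing $\mu/v^{n+1}$ as the diffusion coefficient and placing $p(v^{n+1},\theta^n)_x$ on the right hand side; (iv) let $\theta^{n+1}$ solve the linear parabolic equation obtained from \eqref{Equation-theta} with $\kappa(v^{n+1},\theta^n)/v^{n+1}$ as diffusion coefficient and source built from $u_x^{n+1}$, $(u_x^{n+1})^2$, $\theta^n$ and $q_x^{n+1}$. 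At each step the frozen coefficients are strictly positive and $H^2$--regular, so classical linear theory delivers the next iterate in the required regularity classes.

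Next I would prove uniform bounds. With $M:=4\|(v_0,u_0,\theta_0)\|_2$ and $\underline m:=\tfrac14\inf_{\mathbb{T}}\{v_0,\theta_0\}$, assume inductively that $\|(v^n,u^n,\theta^n,q^n)\|_2\le M$ and $v^n,\theta^n\ge\underline m$ on $\Pi_{t_0}$. The elliptic estimate combined with Sobolev embedding on $\mathbb{T}$ yields an $H^3$ bound on $q^{n+1}$ depending only on $M$, and in particular an $L^\infty$ bound on $q_x^{n+1}$. The pointwise formula for $v^{n+1}$ directly provides its $H^2$ norm in terms of $M$, and forces $\tfrac12\inf v_0\le v^{n+1}\le 2\sup v_0$ on $\Pi_{t_0}$ whenever $t_0\le t_0(M,\underline m)$ is small enough. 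Standard parabolic $H^2$ energy estimates on $u^{n+1}$ (testing successively by $u^{n+1}$, $u_{xx}^{n+1}$ and $u_t^{n+1}$) close its $H^2$ bound after a short time, and the analogous procedure closes the $H^2$ bound for $\theta^{n+1}$. Finally, applying the parabolic maximum principle to the equation for $1/\theta^{n+1}$ analogous to \eqref{Identity-temperature-radiative-NS}, together with the $L^\infty$ control of $q_x^{n+1}$ just obtained, produces $\theta^{n+1}\ge\underline m$ after possibly further shrinking $t_0$. This closes the induction.

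I would then pass to the limit by showing that the differences $\delta^n:=(v^{n+1}-v^n,u^{n+1}-u^n,\theta^{n+1}-\theta^n,q^{n+1}-q^n)$ contract in a weaker norm. A routine energy estimate on the linearized difference equations, using the uniform $H^2$ bounds and Moser--type product inequalities, yields
\[
\sup_{[0,t_0]}\|\delta^n(t)\|_1^2\le C(M)\,t_0\sup_{[0,t_0]}\|\delta^{n-1}(t)\|_1^2,
\]
so that for $t_0$ smaller still the iterates form a Cauchy sequence in $C([0,t_0];H^1(\mathbb{T}))$. The limit enjoys the full $H^2$ spatial regularity and the temporal regularity listed in the lemma by weak${}^{*}$ compactness from the uniform $H^2$ bounds combined with an inspection of the linear equations themselves, and uniqueness follows from the same difference estimate applied to two candidate solutions.

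The chief technical obstacle, common to all parabolic--hyperbolic couplings with free lower bounds, is to propagate the strict positivity of $v$ and $\theta$ uniformly along the iteration, since these bounds alone render the coefficients $\mu/v$, $\kappa(v,\theta)/v$, $1/v$, $1/\theta^2$ well-defined and the parabolic and elliptic problems genuinely coercive. I would handle this by choosing $t_0$ so small (depending on $M$ and $\underline m$) that a Gr\"onwall--type argument applied to $v^{n+1}$ through its explicit transport formula, and to $1/\theta^{n+1}$ through the parabolic maximum principle together with the uniform $L^\infty$ bound on $q_x^{n+1}$ obtained from the elliptic estimate, moves these quantities by at most a factor of two in the appropriate direction.
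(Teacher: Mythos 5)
The paper does not actually prove this lemma: it only cites the standard local existence arguments of Kazhikhov--Shelukhin, Tani, and Kagei--Kawashima and omits all details, so there is no in-paper proof to compare against line by line. Your Picard-type linearization scheme (elliptic solve for $q$ with frozen $(v^n,\theta^n)$, explicit transport formula for $v^{n+1}$, frozen-coefficient parabolic solves for $u^{n+1}$ and $\theta^{n+1}$, uniform bounds on a short time interval, contraction in a weaker norm, and positivity of $v,\theta$ preserved up to a factor of two for small $t_0$) is essentially the method those references employ, so your proposal fills in the omitted proof along the intended route and is sound in outline. Two bookkeeping points if you write it out in full: the inductive hypothesis should carry the parabolic dissipation norms $\|u^n\|_{L^2(0,t_0;H^3(\mathbb{T}))}$ and $\|\theta^n\|_{L^2(0,t_0;H^3(\mathbb{T}))}$ in addition to the $C([0,t_0];H^2)$ bound, since $v^{n+1}=v_0+\int_0^t u^n_x\,ds$ acquires its $H^2$ regularity only through $\int_0^t u^n_{xxx}\,ds$; and the contraction estimate should be set up in a norm that includes $\int_0^{t_0}\|(\delta u_{xx},\delta\theta_{xx})\|^2\,ds$, because the $H^1$ difference of the specific volumes is controlled by the dissipation of $\delta u$ rather than by $\sup_t\|\delta u(t)\|_1$ alone. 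Neither point changes the structure of your argument.
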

Lemma \ref{local existence} can be easily proved by using a similar method as in \cite{Kazhikhov-Shelukhin-JAMM-1977} for the one-dimensional viscous and heat conducting ideal polytropic gas, \cite{Tani-PRIMS-1977} for the three dimensional compressible viscous fluid motion, or \cite{Kagei-Kawashima-JHDE-2006} for a quasilinear hyperbolic-parabolic system. Here we omit the details for brevity.

Suppose that the unique local solution $(v(t,x), u(t,x), \theta(t,x), q(t,x))$ constructed in Lemma \ref{local existence} has been extended to the strip $\Pi_T=[0,T]\times\mathbb{T}$ for some $T\geq t_0>0$ which satisfies similar estimates listed in Lemma \ref{local existence}, we now turn to deduce certain {\it a priori} estimates in terms of
$\left\|\left(v_{0}, u_{0}, \theta_{0}\right)\right\|_2$, $\inf _{x \in \mathbb{T}}\{v_0(x), \theta_{0}(x)\}$. For result in this direction, we can get the following result:
\begin{lemma}[A priori estimate]\label{a priori estimate}
Assume that the unique local solution $(v(t,x), u(t,x), \theta(t,x), q(t,x))$ constructed in Lemma \ref{local existence} has been extended to the strip $\Pi_T=[0,T]\times\mathbb{T}$ which satisfies similar estimates listed in Lemma \ref{local existence}, then if the assumptions listed in Theorem \ref{main theorem} are satisfied, there exist positive constants $\underline{V}(T)$, $\overline{V}(T)$, $\underline{\Theta}(T)$, $\overline{\Theta}(T)$, and $C(T)$, which depend only on $\left\|\left(v_{0}, u_{0}, \theta_{0}\right)\right\|_2$, $\inf _{x \in \mathbb{T}}\{v_0(x), \theta_{0}(x)\}$, and $T$, such that $(v(t,x), u(t,x), \theta(t,x), q(t,x))$ satisfies the following {\it a priori} estimates:
\begin{eqnarray}\label{A priori esitmiates-on-bounds-of-v-theta}
\underline{V}(T) \leq v(t,x) &\leq& \overline{V}(T),\quad \forall(t, x) \in\left[0, T\right] \times \mathbb{T},\nonumber\\
\underline{\Theta}(T) \leq \theta(t,x) &\leq& \overline{\Theta}(T),\quad \forall(t, x) \in\left[0, T\right] \times \mathbb{T},
\end{eqnarray}
and
\begin{equation}\label{A priori esitmiates-on-H2-norm}
   \|(v(t), u(t), \theta(t), q(t))\|_1^2+\left\|\left(u_{xx}(t),q_{xx}(t),u_t(t)\right)\right\|^2 +\int_{0}^{t}\left\|\left(u_{xt}(s),\theta_{xx}(s)\right)\right\|^2ds\leq C(T),\quad 0\leq t\leq T.
  \end{equation}
\end{lemma}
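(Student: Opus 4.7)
The overall plan is to apply the continuation method: combine the local existence in Lemma \ref{local existence} with the a priori estimates below to extend the solution up to any prescribed $T$. The central task is to obtain two-sided positive bounds on $v$ and $\theta$; once those are in hand, the higher-order bound \eqref{A priori esitmiates-on-H2-norm} follows from a standard cascade of $L^2$-based energy estimates applied successively to the momentum, energy, and radiative equations of \eqref{Radiation-NS}.

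First, I would establish the baseline conservation laws in the periodic setting. Integrating \eqref{Radiation-NS}$_1$ and \eqref{Radiation-NS}$_3$ over $\mathbb{T}$ and using periodicity (so $\int_{\mathbb{T}} q_x\,dx=0$) yields $\int_{\mathbb{T}} v(t,x)\,dx\equiv 1$ and conservation of the total energy $\int_{\mathbb{T}}(e+u^{2}/2)\,dx$. This gives a uniform $L^{1}$ bound on $\theta$ and a uniform $L^{2}$ bound on $u$. Testing the elliptic equation \eqref{Radiation-NS}$_4$ against $q$ and against $q/\theta^{k}$ for suitable $k$ provides $L^{2}$ controls on $q$ and $q_x$ in terms of powers of $\theta$. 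I cannot rely on the basic entropy identity \eqref{entropy identity} to extract the dissipative integrals in $u_x,\theta_x$, because the cubic term $\int_{0}^{t}\!\int_{\mathbb{T}}\tfrac{5 q \theta_x q_x}{4bv\theta^{6}}\,dx\,ds$ on its right-hand side is uncontrollable for large data; those dissipative estimates will only be recovered a posteriori.

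Second, I would derive a positive lower bound on $v$ via Kanel's identity \eqref{Kanel-identity}. Integrating $(\mu v_x/v)_t=u_t+p_x$ in $x$ between a point of mean value (which exists thanks to the unit-mass normalization) and an arbitrary point, and combining with the $L^{2}$ bound on $u$, gives a pointwise upper bound on $|\ln v(t,x)|$ depending on an upper bound of $\theta$. To produce the upper bound of $v$, which is inherently coupled to that of $\theta$, I would introduce Kawohl--Liao--Zhao type auxiliary functionals $\mathfrak{X}(t),\mathfrak{Y}(t),\mathfrak{Z}(t)$ built from weighted $L^{2}$ integrals of $v_x/v$, $u_x$ and $\theta_x$. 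Differentiating these along the flow and exploiting the precise algebraic structure of \eqref{Radiation-NS} together with the conductivity form \eqref{Kappa} leads to a coupled system of Gronwall-type inequalities whose right-hand sides depend polynomially on $\|v\|_{L^\infty_{t,x}}$ and $\|\theta\|_{L^\infty_{t,x}}$; this expresses $\|v\|_{L^\infty_{t,x}}$ in terms of a power of $\|\theta\|_{L^\infty_{t,x}}$.

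The crux of the argument, and the step I expect to be the main obstacle, is the control of $\|\theta\|_{L^\infty_{t,x}}$ itself. Here I would implement the iteration scheme announced in the introduction: starting from a crude control of $\int_{0}^{T}\|\theta(s)\|_{L^\infty}^{m_{0}}\,ds$ extracted from the partial entropy/energy identities together with the $\theta^{\beta}$-weighted dissipation in $\kappa$, I would use the parabolic equation \eqref{Equation-theta} for $\theta$ to bootstrap to increasing exponents $m_{k+1}>m_k$, each step absorbing the remaining nonlinear contributions against the $\theta^{\beta}$-gain. The threshold $\beta>157/17$ is precisely the algebraic condition under which this iteration closes and delivers $\|\theta\|_{L^\infty([0,T]\times\mathbb{T})}\le \overline{\Theta}(T)$; feeding this back into the $\mathfrak{X},\mathfrak{Y},\mathfrak{Z}$ inequalities then yields the upper bound $\overline{V}(T)$ for $v$. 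For the remaining lower bound on $\theta$, I would use the equation \eqref{Identity-temperature-radiative-NS} for $1/\theta$; the only non-standard term is the nonlocal $q_x/\theta^{2}$, which I would dominate via a periodic analogue of \eqref{pointwise-q-theta-Euler}, obtained by solving \eqref{Radiation-NS}$_4$ explicitly with the periodic Green's function to derive a pointwise bound of the form $q_x(t,x)\le C\,\theta(t,x)^{4}$. Combining this with the already established upper bounds on $v$ and $\theta$, the maximum principle applied to $1/\theta$ furnishes $\theta(t,x)\ge \underline{\Theta}(T)>0$, after which the $H^{2}$-type bound \eqref{A priori esitmiates-on-H2-norm} follows by standard energy methods.
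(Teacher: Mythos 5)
Your overall architecture coincides with the paper's: conservation laws on the torus, Kawohl--Liao--Zhao type auxiliary functionals closed by power counting under $\beta>\frac{157}{17}$, an $L^m([0,T];L^\infty(\mathbb{T}))$ iteration for $\theta$, a periodic pointwise estimate of $q_x$ by $\theta^4$, the parabolic minimum principle for the lower bound of $\theta$, and a final cascade of higher-order energy estimates. The genuine gap is your treatment of the positive lower bound of $v$. You propose to obtain a pointwise bound on $|\ln v|$ from Kanel's identity \eqref{Kanel-identity} ``combined with the $L^{2}$ bound on $u$'', and you accept that this bound depends on an upper bound of $\theta$. Neither half works as stated. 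First, the usual Kanel-type argument requires control of $\|\mu v_x/v-u\|_{L^2}$, which in turn needs the dissipation integrals $\int_0^t\int_{\mathbb{T}}\big(\mu u_x^2/(v\theta)+\kappa\theta_x^2/(v\theta^2)\big)dxds$; as you yourself observe when discarding \eqref{entropy identity}, exactly these are unavailable for large data, so a pointwise $|\ln v|$ bound does not follow from the ingredients you list. Second, and more structurally, if the lower bound of $v$ is only known through $\|\theta\|_{L^\infty_{t,x}}$ (equivalently through powers of $\mathfrak{Y}$), then every subsequent estimate that uses $1/v\in L^\infty$ --- essentially all of Lemmas \ref{crude unclosed energy estiamte for momentum}--\ref{estimate for X Y} --- picks up extra powers of $\mathfrak{Y}$, the exponent bookkeeping changes, and the closing condition is no longer $\beta>\frac{157}{17}$, nor is it clear the scheme closes at all. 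The paper avoids this circularity by deriving the lower bound of $v$ before, and independently of, any sup bound on $\theta$: in the Kazhikhov--Shelukhin representation \eqref{representation for specific volume} every term is positive, so $v\ge v_0/(BY)$, and $B$, $Y$ are bounded using only the conserved quantity $\int_{\mathbb{T}}\theta\,dx$ and Gronwall (Lemmas \ref{estimate for B and Y} and \ref{lower bound for specific volume}). You should replace your Kanel step by this positivity argument.

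Two smaller points. The sup bound on $\theta$ is not delivered by the $L^mL^\infty$ iteration alone: in the paper, Lemma \ref{Lp-L-infty-theta} only expresses $\int_0^t\|\theta(s)\|_{L^\infty}^p ds$ through powers of $\mathfrak{Y}$, and $\|\theta\|_{L^\infty_{t,x}}\le\overline{\Theta}(T)$ emerges only after the two coupled inequalities for $\mathfrak{Z}$ and $\mathfrak{X}+\mathfrak{Y}$ (Lemmas \ref{estimate for Z} and \ref{estimate for X Y}) are closed simultaneously by Young's inequality, so the order in your narrative (first $\overline{\Theta}(T)$, then feed back into the functionals) should be reversed. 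Also, the explicit Green's-function computation for $q$ must be performed in Eulerian coordinates, where \eqref{radiation hydrodynamics system in Euler coordinates}$_4$ has constant coefficients; the Lagrangian equation \eqref{Radiation-NS}$_4$ does not admit an explicit kernel, and the nonpositivity of the periodic kernel still has to be checked (the paper verifies $K(z)\le 0$ explicitly), the resulting Lagrangian bound being $q_x\le b\,v\,\theta^4$, which is harmless once the upper bound on $v$ is in hand.
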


Having obtained Lemma \ref{a priori estimate}, Theorem \ref{main theorem} can be easily proved by the continuation argument. Thus to prove Theorem \ref{main theorem}, we only need to prove the {\it a priori} estimates \eqref{A priori esitmiates-on-bounds-of-v-theta} and \eqref{A priori esitmiates-on-H2-norm} stated in Lemma \ref{a priori estimate}.

To make the presentation clear, we divide the proof Lemma \ref{a priori estimate} into three parts, the first one is concerned with the positive lower estimate on the specific volume.

\subsection{Estimate on the lower bound of the specific volume}

In this subsection, we try to deduce a positive lower bound of the specific volume $v(t,x)$. For this purpose, we first list the following estimates which is a direct consequence of the conservative structure of the system \eqref{Radiation-NS} and the periodic boundary condition.
\begin{lemma}\label{conservation of mass} Under the assumptions listed in Lemma \ref{a priori estimate}, we have for all $0\leq t\leq T$ that
\begin{equation}\label{conservation of specific volume}
\int_{\mathbb{T}}v(t,x)dx=\int_{\mathbb{T}}v_0(x)dx=1,
\end{equation}
\begin{equation}\label{conservation of velocity}
\int_{\mathbb{T}}u(t,x)dx=\int_{\mathbb{T}}u_0(x)dx,
\end{equation}
\begin{equation}\label{conservation of energy}
  \int_{\mathbb{T}}\left(c_v\theta(t,x)+\frac{1}{2}u(t,x)^2\right)dx=\int_{\mathbb{T}}\left(c_v\theta_0(x)+\frac{1}{2}u_0(x)^2\right)dx,
\end{equation}
and
\begin{equation}\label{consevation of radiation}
  \int_{\mathbb{T}}v(t,x)q(t,x)dx=0.
\end{equation}
\end{lemma}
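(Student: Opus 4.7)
The plan is to read off all four identities by testing each equation of \eqref{Radiation-NS} against the constant function $1$ on $\mathbb{T}$ and using the periodic boundary condition to kill every total $x$-derivative. Identities \eqref{conservation of specific volume}--\eqref{conservation of energy} express genuine time-conservation laws associated with the three evolutionary equations of the system, while \eqref{consevation of radiation} is not a time-conservation statement but rather an algebraic identity coming from the elliptic equation for the radiative flux.

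For \eqref{conservation of specific volume}, I would integrate the continuity equation $v_t=u_x$ over $\mathbb{T}$; periodicity of $u$ in $x$ gives $\int_{\mathbb{T}} u_x\,dx=0$, so $\frac{d}{dt}\int_{\mathbb{T}} v(t,x)\,dx=0$ and the value equals the normalized initial mass $\int_{\mathbb{T}} v_0\,dx=1$. For \eqref{conservation of velocity}, I would integrate the momentum equation $u_t+p_x=(\mu u_x/v)_x$ over $\mathbb{T}$; both $p$ and $\mu u_x/v$ are periodic in $x$ (since $v,u,\theta$ are), hence their $x$-integrals of $x$-derivatives vanish, leaving $\frac{d}{dt}\int_{\mathbb{T}} u\,dx=0$. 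For \eqref{conservation of energy}, I would integrate the total energy equation $(e+u^2/2)_t+(pu)_x+q_x=(\mu u u_x/v)_x+(\kappa(v,\theta)\theta_x/v)_x$ and use $e=c_v\theta$; again every flux on the right-hand side is periodic in $x$, so all $x$-derivative terms integrate to zero and $\frac{d}{dt}\int_{\mathbb{T}}(c_v\theta+u^2/2)\,dx=0$.

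For \eqref{consevation of radiation}, I would integrate the radiation equation $-(q_x/v)_x+avq+b(\theta^4)_x=0$ over $\mathbb{T}$. By periodicity, both $\int_{\mathbb{T}}(q_x/v)_x\,dx$ and $\int_{\mathbb{T}}(\theta^4)_x\,dx$ vanish, so the identity collapses to $a\int_{\mathbb{T}} v(t,x)q(t,x)\,dx=0$; since $a>0$, this yields \eqref{consevation of radiation}. The only subtle point worth remarking on is that \eqref{consevation of radiation} holds pointwise in $t$ (it does not require an initial condition for $q$) precisely because the fourth equation of \eqref{Radiation-NS} is an elliptic constraint rather than an evolution equation, so no step is truly an obstacle; the argument is entirely a bookkeeping exercise once periodicity is invoked.
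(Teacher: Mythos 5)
Your proposal is correct and is exactly the argument the paper intends: the paper states this lemma as ``a direct consequence of the conservative structure of the system \eqref{Radiation-NS} and the periodic boundary condition,'' i.e.\ integrating each of the four equations over $\mathbb{T}$ and using periodicity to annihilate all exact $x$-derivatives, with \eqref{consevation of radiation} following pointwise in $t$ from the elliptic constraint since $a>0$. Your observation that the fourth identity needs no initial data is a fair and accurate refinement of the same approach.
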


From \eqref{conservation of specific volume}, we can deduce for each $0\leq t\leq T$ that there exists a $a(t)\in\mathbb{T}=\left[-\frac 12,\frac 12\right]$ such that
\begin{equation*}\label{definition of a(t)}
  v(t,a(t))=\int_{\mathbb{T}}v_0(x)dx=1.
\end{equation*}

Setting
\begin{eqnarray}\label{definition for B(t,x) and Y(t)}
B(t,x)&=&\exp\left(-\frac{1}{\mu}\int_{a(t)}^{x}\left(u(t,y)-u_0(y)\right)dy\right),\\
Y(t)&=&\frac{v(t,a(t))}{v_0(a(t))}\exp\left(\frac{1}{\mu}\int_{0}^{t}\frac{R\theta(s,a(s))}{v(s,a(s))}ds\right),\nonumber
\end{eqnarray}
we can further deduce the following representation formula for the specific volume $v(t,x)$:
\begin{equation}\label{representation for specific volume}
v(t,x)=\frac{v_0(x)}{B(t,x)Y(t)}\left(1+\int_{0}^{t}\frac{R}{\mu}\frac{\theta(s,x)B(s,x)Y(s)}{v_0(x)}ds\right).
\end{equation}

To use the representation formula \eqref{representation for specific volume} for the specific volume to deduce the desired lower bound on $v(t,x)$, we need first to show that both $B(t,x)$ and $Y(t)$ are bounded from below and above. In fact from \eqref{definition for B(t,x) and Y(t)}, \eqref{representation for specific volume}, and the estimates obtained in Lemma \ref{conservation of mass}, we have
\begin{lemma}\label{estimate for B and Y} Under the assumptions listed in Lemma \ref{a priori estimate}, there exists a positive constant $C(T)$ which depends only on initial data $(v_0(x),u_0(x),\theta_0(x))$ and $T$ such that
\begin{equation}\label{Estimate-on-B(t,x)-Y(t)}
    C(T)^{-1}\leq B(t,x)\leq C(T),\quad C(T)^{-1}\leq Y(t)\leq C(T),\quad\forall(t,x)\in\mathbb{T}\times[0,T].
  \end{equation}
\end{lemma}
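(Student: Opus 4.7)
My plan is to combine the representation formula \eqref{representation for specific volume} with the mass and energy conservation laws established in Lemma \ref{conservation of mass} to deduce both bounds simultaneously. The argument splits cleanly into a direct $L^2$ estimate for $B$ (which is actually $T$-independent) and an integral Gronwall argument for $Y$.

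First, I would bound $B(t,x)$ using only the basic $L^2$ estimate on $u$. Because $|\mathbb{T}| = 1$, the Cauchy--Schwarz inequality gives
$$\left|\int_{a(t)}^{x}(u(t,y) - u_0(y))\,dy\right| \leq \|u(t)\|_{L^2(\mathbb{T})} + \|u_0\|_{L^2(\mathbb{T})},$$
and the conservation of energy \eqref{conservation of energy} together with the positivity of $\theta$ yields $\|u(t)\|_{L^2(\mathbb{T})} \leq C$ for a constant depending only on the initial data. Substituting into the definition of $B$ in \eqref{definition for B(t,x) and Y(t)} then delivers $C^{-1} \leq B(t,x) \leq C$, uniformly in $T$.

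Next, I would integrate \eqref{representation for specific volume} over $\mathbb{T}$ and invoke \eqref{conservation of specific volume} to obtain
\begin{align*}
1 = \frac{1}{Y(t)}\int_{\mathbb{T}}\frac{v_0(x)}{B(t,x)}\,dx + \frac{R}{\mu\, Y(t)}\int_0^t Y(s)\int_{\mathbb{T}}\frac{\theta(s,x)B(s,x)}{B(t,x)}\,dx\,ds.
\end{align*}
For the lower bound on $Y$, I would drop the nonnegative double integral and use $B \leq C$ together with $\int_{\mathbb{T}} v_0\,dx = 1$ to read off $Y(t) \geq C^{-1}$. For the upper bound, I would use the two-sided bound on $B$ and the $L^1$ estimate $\|\theta(s)\|_{L^1(\mathbb{T})} \leq C$ (immediate from \eqref{conservation of energy} since $u^2 \geq 0$) to derive the linear integral inequality
$$Y(t) \leq C + C'\int_0^t Y(s)\,ds,$$
after which Gronwall's inequality yields $Y(t) \leq C(T)$ on $[0,T]$.

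The main conceptual obstacle is that $Y(t)$ is defined as an exponential of a temperature integral along the moving curve $s \mapsto a(s)$, and no pointwise bound on $\theta$ is available at this early stage, so neither factor in the exponent is a priori controllable. The key observation that rescues the argument is that this exponential enters the representation \eqref{representation for specific volume} in a combination which, once integrated against $dx$, can be estimated using merely the $L^1$-in-$x$ bound on $\theta$ coming from energy conservation; this converts the seemingly pointwise problem into a closed linear Gronwall argument and sidesteps any pointwise temperature estimate, which is only accessible much later in the proof.
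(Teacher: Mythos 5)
Your proposal is correct and follows essentially the same route as the paper: the bound on $B$ via energy conservation and the Cauchy--Schwarz (H\"{o}lder) inequality, and the bound on $Y$ by integrating the representation formula \eqref{representation for specific volume} over $\mathbb{T}$, using \eqref{conservation of specific volume}, the two-sided bound on $B$, the $L^1$ control of $\theta$ from \eqref{conservation of energy}, and Gronwall's inequality (with the lower bound on $Y$ obtained by discarding the nonnegative integral term, just as in the paper's two-sided inequality). No gaps; nothing further is needed.
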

\begin{proof} The estimate of $B(t,x)$ follows immediately from the estimate \eqref{conservation of energy} obtained in Lemma \ref{conservation of mass} and the H\"{o}lder inequality.

To yield an estimate on $Y(t)$, we get from the representation formula \eqref{representation for specific volume} for the  specific volume that
\begin{equation*}
v(t,x)Y(t)=\frac{v_0(x)}{B(t,x)}\left(1+\int_{0}^{t}\frac{R}{\mu}\frac{\theta(s,x)B(s,x)Y(s)}{v_0(x)}ds\right).
\end{equation*}

Integrating the above identity with respect $x$ over $\mathbb{T}$, we have from the estimate \eqref{conservation of specific volume} obtained in Lemma \ref{conservation of mass} that
\begin{equation*}
Y(t)=\left(\int_{\mathbb{T}}v_0(x)dx\right)^{-1}\left[\int_{\mathbb{T}}\frac{v_0(x)}{B(t,x)}dx +\frac{R}{\mu}\int_{\mathbb{T}}\frac{v_0(x)}{B(t,x)}\left(\int_{0}^{t}\frac{\theta(s,x)B(s,x)Y(s)}{v_0(x)}ds\right)dx\right].
\end{equation*}
Therefore, we can deduce from the estimate of $B(t,x)$, the assumptions imposed on the initial data, and Fubini's theorem that there exists a positive constant $C(T)\geq 1$ depending only on the initial data $(v_0(x), u_0(x), \theta_0(x))$ and $T$ such that
\begin{equation*}
C^{-1}(T)+C^{-1}(T)\int_{0}^{t}\left(\int_{\mathbb{T}}\frac{\theta(s,x)}{v_0(x)}dx\right)Y(s)ds\leq Y(t)\leq C(T)+C(T)\int_{0}^{t}\left(\int_{\mathbb{T}}\frac{\theta(s,x)}{v_0(x)}dx\right)Y(s)ds,
\end{equation*}
from which one can further deduce from the estimate \eqref{conservation of energy} obtained in Lemma \ref{conservation of mass} and the assumptions imposed on the initial specific volume $v_0(x)$ that
\begin{equation*}
C^{-1}(T)+C^{-1}(T)\int_{0}^{t}Y(s)ds\leq Y(t)\leq C(T)+C(T)\int_{0}^{t}Y(s)ds.
\end{equation*}

Having obtained the above inequality, the estimate of $Y(t)$ follows directly from the Gronwall inequality. This completes the proof of Lemma \ref{estimate for B and Y}.
\end{proof}

From the representation formula \eqref{representation for specific volume} and the estimate on the upper bound for $Y(t)$ obtained in Lemma \ref{estimate for B and Y}, we can now derive the lower bound of specific volume.
\begin{lemma}\label{lower bound for specific volume}
Under the assumptions stated in Lemma \ref{a priori estimate}, there exists a positive constant $C(T)$ depending only on initial data $(v_0(x), u_0(x), \theta_0(x))$ and $T$ such that
\begin{equation}\label{Estimate-lower-bound-v}
v(t,x)\geq C(T)
\end{equation}
holds for all $(t,x)\in[0,T]\times\mathbb{T}$.
\end{lemma}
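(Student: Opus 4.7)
The plan is to read off the lower bound on $v(t,x)$ directly from the representation formula \eqref{representation for specific volume} combined with the two-sided bounds on $B(t,x)$ and $Y(t)$ already established in Lemma \ref{estimate for B and Y}. The key structural observation is that every factor appearing inside the time integral on the right-hand side of \eqref{representation for specific volume} is strictly positive: by the (already assumed) local positivity of the absolute temperature we have $\theta(s,x)>0$, while $B(s,x)>0$, $Y(s)>0$, and $v_0(x)>0$ hold by definition and by the hypotheses on the initial data. Consequently the parenthesized expression $1+\int_0^t\frac{R}{\mu}\frac{\theta BY}{v_0}\,ds$ is at least $1$.

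Concretely, I would proceed as follows. First, discard the nonnegative integral in \eqref{representation for specific volume} to get the pointwise inequality
\begin{equation*}
v(t,x)\;\geq\;\frac{v_0(x)}{B(t,x)\,Y(t)},\qquad \forall (t,x)\in[0,T]\times\mathbb{T}.
\end{equation*}
Next, invoke the upper bounds $B(t,x)\leq C(T)$ and $Y(t)\leq C(T)$ from \eqref{Estimate-on-B(t,x)-Y(t)}, together with the hypothesis $\inf_{x\in\mathbb{T}}v_0(x)>0$ from Theorem \ref{main theorem}, to deduce
\begin{equation*}
v(t,x)\;\geq\;\frac{\inf_{x\in\mathbb{T}}v_0(x)}{C(T)^2}\;=:\;C(T)^{-1}.
\end{equation*}
This is exactly the claim \eqref{Estimate-lower-bound-v}.

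There is essentially no obstacle here; the work has been done in Lemma \ref{estimate for B and Y}, where Kanel'-type identity \eqref{Kanel-identity} combined with the $L^2$ control on $u$ from \eqref{conservation of energy} and the $L^1$ control on $\theta$ from \eqref{conservation of energy} were used to bound $B$ and $Y$ from above and below. The only thing worth emphasizing in the write-up is the sign structure of \eqref{representation for specific volume}: because the derivation exploits the monotone character of $v_0(x)/(BY)$ as a \emph{lower} envelope of $v$, the positivity of $\theta$ (not its pointwise size) is what matters, so no {\it a priori} information about the upper bound of $\theta$ is required at this stage. This is important, since the upper bound on $\theta$ is only obtained later via the auxiliary functionals $\mathfrak{X}(t),\mathfrak{Y}(t),\mathfrak{Z}(t)$ alluded to in the introduction.
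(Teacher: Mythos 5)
Your proposal is correct and follows essentially the same route as the paper: drop the nonnegative time integral in the representation formula \eqref{representation for specific volume} to get $v\geq v_0/(B\,Y)$, then use the positivity of $v_0$ together with the upper bounds on $B(t,x)$ and $Y(t)$. The only cosmetic difference is that the paper re-expands $B(t,x)$ via its exponential formula and the energy bound \eqref{conservation of energy}, whereas you cite the two-sided estimate \eqref{Estimate-on-B(t,x)-Y(t)} directly, which is equivalent.
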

\begin{proof}
Since all the quantities in the right hand side of \eqref{representation for specific volume} are positive, we have
\begin{eqnarray*}
v(t,x)&\geq&\frac{v_0(x)}{B(t,x)Y(t)}\\
 &\geq& C(T)v_0(x) \exp\left(\frac{1}{\mu}\int_{a(t)}^{x}u(t,y)dy\right) \exp\left(-\frac{1}{\mu}\int_{a(t)}^{x}u_0(y)dy\right)\\
 &\geq&C(T)v_0(x)\exp\left(-\frac{2}{\mu}\int_{\mathbb{T}}\left(\frac{R}{\gamma-1}\theta_0(x)+\frac{1}{2}u_0(x)^2\right)dx\right).
\end{eqnarray*}
This completes the proof of Lemma \ref{lower bound for specific volume}.
\end{proof}

\subsection{Estimates on the upper bounds of the specific volume and the absolute temperature}
In this subsection, we deduce the desired upper bounds on both the specific volume $v(t,x)$ and the absolute temperature $\theta(t,x)$. To this end, motivated by the work of B. Kawohl in \cite{Kawohl-JDE-1985} for real gas and Y.-K. Liao and H.-J. Zhao in \cite{Liao-Zhao-JDE-2018} for a viscous radiative and reactive gas, we introduce the following auxiliary functions
\begin{eqnarray}
  \mathfrak{X}(t)&=&\int_{0}^{t}\int_{\mathbb{T}}\frac{\kappa(v(s,x),\theta(s,x))\left|\theta_t(t,x)\right|^2}{v(s,x)}dxds,\label{X(t)}\\
  \mathfrak{Y}(t)&=&\max_{0\leq s\leq t}\left\{\int_{\mathbb{T}}\frac{\left|\kappa(v(s,x),\theta(s,x))\right|^2\left|\theta_x(s,x)\right|^2}{|v(s,x)|^2}dx\right\},\label{Y(t)}\\
  \mathfrak{Z}(t)&=&\max_{0\leq s\leq t}\left\{\int_{\mathbb{T}}\frac{\left|u_{xx}(s,x)\right|^2}{|v(s,x)|^2}dx\right\}.\label{Z(t)}
\end{eqnarray}
It is worth to pointing out that the definitions of the auxiliary functions $\mathfrak{X}(t), \mathfrak{Y}(t)$, and $\mathfrak{Z}(t)$ are a slight modification of those introduced in \cite{Kawohl-JDE-1985, Liao-Zhao-JDE-2018} since in our case we have not obtained the upper bound for the specific volume $v(t,x)$ at this moment.

According to the definitions of $\mathfrak{X}(t)$, $\mathfrak{Y}(t)$, and $\mathfrak{Z}(t)$, we have the following estimate
\begin{lemma}\label{crude estimate for theta}
Under the assumptions listed in Lemma \ref{a priori estimate}, there exists a positive constant $C(T)$ depending only on initial data $(v_0(x), u_0(x), \theta_0(x))$ such that
\begin{equation}\label{Rough-estimate-theta}
\|\theta(t)\|_{L^\infty}\leq C(T)\left(1+\mathfrak{Y}(t)^{\mathfrak{r}}\right),\quad 0\leq t\leq T,
\end{equation}
where $\mathfrak{r}=\frac{1}{2\beta+3}$.
\end{lemma}
\begin{proof}
According to the estimate \eqref{conservation of energy} obtained in Lemma \ref{conservation of mass}, for each $t\in(0,T]$, there exists a $b(t)\in \mathbb{T}$ such that
\begin{equation*}
  c_v\theta(t,b(t))+\frac{1}{2}u^2(t,b(t))=\int_{\mathbb{T}}\left(c_v\theta_0(x)+\frac{1}{2}u_0(x)^2\right)dx.
\end{equation*}
Noticing also
\begin{equation*}
  \left|\theta^m(t,x)-\theta^m(t,b(t))\right|\leq \left|\int_{b(t)}^{x}\left||\theta(t,y)|^{m-1}\theta_x(t,y)\right|dy\right| \leq\int_{\mathbb{T}}\left||\theta(t,x)|^{m-1}\theta_x(t,x)\right|dx,
\end{equation*}
we can get that
\begin{eqnarray*}
\|\theta(t)\|_{L^\infty}^m&\leq&C\left(1+\int_{\mathbb{T}}\left||\theta(t,x)|^{m-1}\theta_x(t,x)\right|dx\right)\\ &\leq&C(T)\left(1+\|\theta(t)\|_{L^\infty}^{\max\left\{0,m-\beta-\frac{3}{2}\right\}} \int_{\mathbb{T}}\left|\frac{\sqrt{\theta(t,x)}\kappa(v(t,x),\theta(t,x))\theta_x(t,x)}{v(t,x)}\right|dx\right)\\
&\leq&C(T)\left(1+\|\theta(t)\|_{L^\infty}^{\max\left\{0,m-\beta-\frac{3}{2}\right\}} \left(\int_{\mathbb{T}}\theta(t,x)dx\right)^\frac{1}{2}\left(\int_{\mathbb{T}}\frac{|\kappa(v(t,x),\theta(t,x))|^2\left|\theta_x(t,x)\right|^2} {|v(t,x)|^2}dx\right)^\frac{1}{2}\right)\\
&\leq&C(T)\left(1+\|\theta(t)\|_{L^\infty}^{\max\left\{0,m-\beta-\frac{3}{2}\right\}}\mathfrak{Y}(t)^\frac{1}{2}\right).
\end{eqnarray*}

If we let $m$ sufficiently large such that $m-\beta-\frac{3}{2}>0$, then for each $\varepsilon>0$, we can get from the above estimate and Young's inequality that
\begin{eqnarray*}
\|\theta(t)\|_{L^\infty}^m&\leq&\varepsilon\|\theta(t)\|_{L^\infty}^{m}+C(T)\left(1+\mathfrak{Y}(t)^\frac{m}{2\beta+3}\right)
\end{eqnarray*}
and the estimate \eqref{Rough-estimate-theta} follows immediately if we choose $\varepsilon>0$ sufficiently small. This completes the proof of Lemma \ref{crude estimate for theta}.
\end{proof}

Having obtained the estimate \eqref{Rough-estimate-theta}, we now turn to deduce some rough estimates on $\|v(t)\|_{L^\infty}$, $\|u_x(t)\|$, and $\|u_x(t)\|_{L^\infty}$. For this purpose, we first estimate $\|v(t)\|_{L^\infty}$ in the following lemma.
\begin{lemma}\label{crude estimate for v}
Under the conditions listed in Lemma \ref{a priori estimate}, there exists a positive constant $C(T)$ depending only on the initial data $(v_0(x), u_0(x), \theta_0(x))$ and $T$ such that
\begin{equation}\label{Rough-estimate-v}
  \|v(t)\|_{L^\infty}\leq C(T)\left(1+\mathfrak{Y}(t)^{\mathfrak{r}}\right),\quad 0\leq t\leq T.
\end{equation}
\end{lemma}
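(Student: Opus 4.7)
The plan is to read off the bound directly from the representation formula for the specific volume established earlier in the paper, namely
\begin{equation*}
v(t,x)=\frac{v_0(x)}{B(t,x)Y(t)}\left(1+\int_{0}^{t}\frac{R}{\mu}\frac{\theta(s,x)B(s,x)Y(s)}{v_0(x)}ds\right),
\end{equation*}
combined with the uniform two-sided bounds on $B(t,x)$ and $Y(t)$ supplied by Lemma \ref{estimate for B and Y} and the crude $L^\infty$-estimate on $\theta$ from Lemma \ref{crude estimate for theta}.

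First I would use Lemma \ref{estimate for B and Y} and the hypothesis $\inf_{x\in\mathbb{T}}v_0(x)>0$ together with $v_0\in H^2(\mathbb{T})\hookrightarrow L^\infty(\mathbb{T})$ to eliminate $v_0$, $B$, $Y$ from the representation formula, yielding
\begin{equation*}
v(t,x)\leq C(T)\left(1+\int_0^t\theta(s,x)\,ds\right)\leq C(T)\left(1+\int_0^t\|\theta(s)\|_{L^\infty}\,ds\right).
\end{equation*}
Then I would insert the bound $\|\theta(s)\|_{L^\infty}\leq C(T)(1+\mathfrak{Y}(s)^{1/(2\beta+3)})$ from Lemma \ref{crude estimate for theta}. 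Since the auxiliary function $\mathfrak{Y}(t)$ is non-decreasing in $t$ by its definition as a maximum over $[0,t]$, one has $\mathfrak{Y}(s)\leq \mathfrak{Y}(t)$ for $s\leq t$, so the time integral is controlled by
\begin{equation*}
\int_0^t\|\theta(s)\|_{L^\infty}\,ds\leq T\,C(T)\left(1+\mathfrak{Y}(t)^{\frac{1}{2\beta+3}}\right).
\end{equation*}
Absorbing the $T$ into $C(T)$ and taking the supremum over $x\in\mathbb{T}$ of the left-hand side gives the claimed estimate \eqref{Rough-estimate-v}.

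There is no genuine obstacle here: the lemma is essentially a bookkeeping consequence of the two ingredients already in place, namely the integral representation of $v$ and the temperature bound in terms of $\mathfrak{Y}$. The only minor point to handle carefully is the monotonicity of $\mathfrak{Y}$, which lets us pull the $\mathfrak{Y}$-factor out of the time integral rather than having to invoke Gr\"onwall or a more refined argument.
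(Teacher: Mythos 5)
Your argument is correct and coincides with the paper's own proof: both read the bound off the representation formula \eqref{representation for specific volume} for $v$, use the two-sided bounds on $B(t,x)$ and $Y(t)$ from Lemma \ref{estimate for B and Y}, and then control $\int_0^t\|\theta(s)\|_{L^\infty}\,ds$ via the rough estimate \eqref{Rough-estimate-theta}. Your explicit remark on the monotonicity of $\mathfrak{Y}(t)$ is a point the paper leaves implicit, but it is the same argument.
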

\begin{proof}
By the representation formula \eqref{representation for specific volume} for $v(t,x)$ and the estimate \eqref{estimate for B and Y} obtained in Lemma \ref{estimate for B and Y}, we have for $(t,x)\in[0,T]\times\mathbb{T}$ that
\begin{equation*}
  |v(t,x)|\leq C(T)\left(1+\int_{0}^{t}\|\theta(s)\|_{L^\infty}ds\right)\leq C(T)\left(1+\|\theta\|_{L^\infty_{t,x}}\right)
\end{equation*}
and \eqref{Rough-estimate-v} follows from \eqref{Rough-estimate-theta}.
\end{proof}
\begin{remark} The representation formula \eqref{representation for specific volume} for the specific volume $v(t,x)$ is used in the proofs of Lemmas \ref{estimate for B and Y}, \ref{lower bound for specific volume}, and \ref{crude estimate for v} and to deduce such a representation formula for $v(t,x)$, one needs to assume that the viscosity $\mu$ is a positive constant.

We note, however, that the argument developed in this paper can be adapted to treat the case when the viscosity $\mu>0$ is a smooth function of the specific volume $v$. In fact, from \eqref{Radiation-NS}$_1$ and \eqref{Radiation-NS}$_2$, one can deduce that
\begin{equation}\label{Kanel-identity-2}
\left(\frac{\mu(v)v_x}{v}\right)_t=\left[\int^v_1\frac{\mu(z)}{z}dz\right]_{xt}=u_t+p_x,
\end{equation}
from which and based on some assumptions imposed on the viscosity $\mu(v)$ at $v\to 0_+$ and $v\to+\infty$, one can deduce the following estimates:
\begin{itemize}
\item Firstly, from \eqref{Kanel-identity-2} and the estimates obtained in Lemma \ref{conservation of mass}, one can deduce the desired positive lower estimate on the specific volume $v(t,x)$ as in \cite{Kawohl-JDE-1985};
\item Secondly, from \eqref{Kanel-identity-2}, as in \cite{Tan-Yang-Zhao-Zou-SIMA-2013}, one can estimate the upper bound of the specific volume in terms of $\mathfrak{Y}(t)$.
\end{itemize}
\end{remark}

For the estimates on $\|u_x(t)\|$ and $\|u_x(t)\|_{L^\infty}$, we have
\begin{lemma}\label{crude estimate for ux}
Under the assumptions stated in Lemma \ref{a priori estimate}, there exists a positive constant $C(T)$ depending only on the initial data $(v_0(x), u_0(x), \theta_0(x))$ and $T$ such that
\begin{eqnarray}
\left\|u_x(t)\right\|&\leq& C(T)\left(1+\mathfrak{Y}(t)^{\frac{1}{2}\mathfrak{r}}\right)\mathfrak{Z}(t)^\frac{1}{4},\quad 0\leq t\leq T,\label{Rough-estimate-u-x-L2}\\
\left\|u_x(t)\right\|_{L^\infty}&\leq& C(T)\left(1+\mathfrak{Y}(t)^{\frac{3}{4}\mathfrak{r}}\right)\mathfrak{Z}(t)^\frac{3}{8},\quad 0\leq t\leq T.\label{Rough-estimate-u-x-L-infty}
\end{eqnarray}
\end{lemma}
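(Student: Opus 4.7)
My plan is to derive both estimates via elementary interpolation arguments, converting the weighted quantity $\mathfrak{Z}(t)$ into a genuine $L^2$-norm of $u_{xx}(t)$ at the cost of the upper bound on $v(t,x)$ already supplied by Lemma~\ref{crude estimate for v}. Every ingredient I need is established in the preceding lemmas: the conservation laws of Lemma~\ref{conservation of mass}, the positive lower bound on $v$ from Lemma~\ref{lower bound for specific volume}, and the rough upper bound $\|v(t)\|_{L^\infty}\leq C(T)(1+\mathfrak{Y}(t)^{1/(2\beta+3)})$ from Lemma~\ref{crude estimate for v}.

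First I would control $\|u_{xx}(t)\|$ by inserting $v^2/v^2$ into the integrand:
$$\|u_{xx}(t)\|^2 \leq \|v(t)\|_{L^\infty}^{2}\int_{\mathbb{T}}\frac{|u_{xx}(t,x)|^2}{|v(t,x)|^2}\,dx \leq \|v(t)\|_{L^\infty}^{2}\,\mathfrak{Z}(t),$$
so by Lemma~\ref{crude estimate for v},
$$\|u_{xx}(t)\| \leq C(T)\bigl(1+\mathfrak{Y}(t)^{1/(2\beta+3)}\bigr)\mathfrak{Z}(t)^{1/2}.$$
For \eqref{Rough-estimate-u-x-L2}, note that Lemma~\ref{conservation of mass} (conservation of total energy) forces $\|u(t)\|$ to be uniformly bounded by a constant depending only on the initial data. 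Integration by parts on $\mathbb{T}$ together with periodicity gives
$$\|u_x(t)\|^2 = -\int_{\mathbb{T}} u\,u_{xx}\,dx \leq \|u(t)\|\,\|u_{xx}(t)\| \leq C\|u_{xx}(t)\|,$$
and substituting the previous bound together with the elementary inequality $(1+x)^{1/2}\leq 1+x^{1/2}$ yields \eqref{Rough-estimate-u-x-L2}.

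For the $L^\infty$ estimate \eqref{Rough-estimate-u-x-L-infty}, the periodic boundary condition gives $\int_{\mathbb{T}} u_x(t,x)\,dx = 0$, so there exists a point $c(t)\in\mathbb{T}$ at which $u_x(t,c(t))=0$. The fundamental theorem of calculus then implies
$$|u_x(t,x)|^2 = 2\int_{c(t)}^{x} u_x u_{xx}\,dy \leq 2\|u_x(t)\|\,\|u_{xx}(t)\|.$$
Combining this with the bounds obtained for $\|u_x(t)\|$ and $\|u_{xx}(t)\|$, and using the identity $\tfrac{1}{4\beta+6}+\tfrac{1}{2\beta+3} = \tfrac{3}{4\beta+6}$, one gets $\|u_x(t)\|_{L^\infty}^{2}\leq C(T)(1+\mathfrak{Y}(t)^{3/(4\beta+6)})\mathfrak{Z}(t)^{3/4}$, and taking square roots produces \eqref{Rough-estimate-u-x-L-infty}.

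I do not foresee a genuine obstacle here; the argument is essentially one-dimensional Gagliardo--Nirenberg on the torus. The only conceptual subtlety is that the unweighted norm $\|u_{xx}(t)\|$ is not yet controlled, so the upper bound on $v$ must be reintroduced through Lemma~\ref{crude estimate for v}, which is precisely why the factors $(1+\mathfrak{Y}(t)^{1/(4\beta+6)})$ and $(1+\mathfrak{Y}(t)^{3/(8\beta+12)})$ appear in the final estimates rather than pure powers of $\mathfrak{Z}(t)$.
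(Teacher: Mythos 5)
Your proposal is correct and follows essentially the same route as the paper: interpolate $\|u_x(t)\|\leq C\|u(t)\|^{1/2}\|u_{xx}(t)\|^{1/2}$ and $\|u_x(t)\|_{L^\infty}\leq C\|u_x(t)\|^{1/2}\|u_{xx}(t)\|^{1/2}$ (you simply re-derive these via integration by parts and the mean-zero property of $u_x$, which the paper cites as Sobolev's inequalities), then convert $\|u_{xx}(t)\|$ into $\|v(t)\|_{L^\infty}\,\mathfrak{Z}(t)^{1/2}$ and invoke Lemma \ref{crude estimate for v}, exactly as the paper does. The exponent bookkeeping, including the use of $(1+x)^{1/2}\leq 1+x^{1/2}$, matches the stated bounds, so there is nothing to fix.
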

\begin{proof}
By Sobolev's inequalities, the estimate \eqref{conservation of energy}, the estimate \eqref{Rough-estimate-v} obtained in Lemma \ref{crude estimate for v}, and noticing that
$$
\int_{\mathbb{T}}u_x(t,x)dx=0,
$$
we can get for $0\leq t\leq T$ that
\begin{equation*}
\|u_x(t)\|\leq C\|u(t)\|^\frac{1}{2}\left\|u_{xx}(t)\right\|^\frac{1}{2}\leq C\left\|u_{xx}(t)\right\|^\frac{1}{2}\leq C\|v(t)\|_{L^\infty}^\frac{1}{2}\left\|\frac{u_{xx}(t)}{v(t)}\right\|^\frac{1}{2}
\end{equation*}
and
\begin{equation*}
\|u_x(t)\|_{L^\infty}\leq C\|u_x(t)\|^\frac{1}{2}\left\|u_{xx}(t)\right\|^\frac{1}{2}\leq C\|u_x(t)\|^\frac{1}{2}\|v(t)\|^\frac{1}{2}\left\|\frac{u_{xx}(t)}{v(t)}\right\|^\frac{1}{2}.
\end{equation*}

Having obtained the above estimates, the estimates \eqref{Rough-estimate-u-x-L2} and \eqref{Rough-estimate-u-x-L-infty} follows easily and this completes the proof of Lemma \ref{crude estimate for ux}.
\end{proof}

With the above rough estimates on $\|\theta(t)\|_{L^\infty}$, $\|v(t)\|_{L^\infty}$, $\|u_x(t)\|$, and $\|u_x(t)\|_{L^\infty}$ in hand, we now turn to deduce certain energy type estimates. To this end, we first get from equation \eqref{Radiation-NS}$_2$ for the conservation of momentum that
\begin{lemma}\label{crude unclosed energy estiamte for momentum}
Under the conditions listed in Lemma \ref{a priori estimate}, there exists a positive constant $C(T)$ depending only on the initial data $(v_0(x), u_0(x), \theta_0(x))$ and $T$ such that
\begin{equation}
\|u(t)\|^2+\int_{0}^{t}\left\|\frac{u_x(s)}{\sqrt{v(s)}}\right\|^2ds\leq C(T)\left(1+\mathfrak{Y}(t)^{\mathfrak{r}}\right),\quad 0\leq t\leq T.
\end{equation}
\end{lemma}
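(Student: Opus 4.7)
The plan is a standard basic energy estimate from the momentum equation, with the right-hand side controlled by the crude $L^\infty$ bound on $\theta$ obtained in Lemma \ref{crude estimate for theta} together with the lower bound on $v$ from Lemma \ref{lower bound for specific volume} and the conservation laws of Lemma \ref{conservation of mass}. The result is called \emph{unclosed} because the bound still depends on the auxiliary functional $\mathfrak{Y}(t)$.

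First, I would multiply the momentum equation \eqref{Radiation-NS}$_2$ by $u$, integrate over $\mathbb{T}$, and use the periodic boundary condition. Integration by parts on both $p_x$ and $\left(\mu u_x/v\right)_x$ together with $p=R\theta/v$ yields the identity
\begin{equation*}
\frac{1}{2}\frac{d}{dt}\|u(t)\|^{2}+\mu\int_{\mathbb{T}}\frac{u_{x}^{2}}{v}\,dx
=R\int_{\mathbb{T}}\frac{\theta u_{x}}{v}\,dx.
\end{equation*}
An application of Young's inequality absorbs half of the dissipation into the left-hand side and leaves
\begin{equation*}
\frac{d}{dt}\|u(t)\|^{2}+\mu\int_{\mathbb{T}}\frac{u_{x}^{2}}{v}\,dx
\le C\int_{\mathbb{T}}\frac{\theta^{2}}{v}\,dx.
\end{equation*}

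Next, I would control the source term. By the lower bound $v(t,x)\ge C(T)^{-1}$ from Lemma \ref{lower bound for specific volume} and the crude pointwise-in-$t$ bound $\int_{\mathbb{T}}\theta\,dx\le C$ provided by \eqref{conservation of energy},
\begin{equation*}
\int_{\mathbb{T}}\frac{\theta^{2}}{v}\,dx\le C(T)\|\theta(t)\|_{L^{\infty}}\int_{\mathbb{T}}\theta(t,x)\,dx\le C(T)\|\theta(t)\|_{L^{\infty}}.
\end{equation*}
Integrating in time, applying Lemma \ref{crude estimate for theta}, and using that $\mathfrak{Y}$ is nondecreasing,
\begin{equation*}
\int_{0}^{t}\int_{\mathbb{T}}\frac{\theta^{2}}{v}\,dx\,ds
\le C(T)\int_{0}^{t}\|\theta(s)\|_{L^{\infty}}\,ds
\le C(T)\Bigl(1+\mathfrak{Y}(t)^{\frac{1}{2\beta+3}}\Bigr).
\end{equation*}
Combining with the differential inequality above and integrating from $0$ to $t$ yields the desired bound.

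There is no serious obstacle in this step since the argument is essentially a clean $L^{2}$ energy estimate; the only delicate point is to make sure the source $R\int \theta u_x/v\,dx$ is split so that it is absorbed on the left in a way that produces exactly the weighted dissipation $\|u_x/\sqrt{v}\|^{2}$ appearing in the statement, and that all occurrences of $1/v$ in the process are controlled by the uniform lower bound from Lemma \ref{lower bound for specific volume}. The reason the estimate is still \emph{unclosed} is precisely because $\mathfrak{Y}(t)$ encodes first derivatives of $\theta$ that have not yet been estimated, so this lemma is a preparatory step whose role is to feed into the subsequent closing-up arguments that eventually bound $\mathfrak{X}$, $\mathfrak{Y}$, and $\mathfrak{Z}$.
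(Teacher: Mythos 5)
Your proposal is correct and follows essentially the same route as the paper: multiply the momentum equation by $u$, integrate, absorb the dissipation via Young's inequality, bound $\int_0^t\int_{\mathbb{T}}\theta^2/v\,dx\,ds$ using the lower bound on $v$ from Lemma \ref{lower bound for specific volume} and the $L^1$ bound on $\theta$ from \eqref{conservation of energy}, then invoke Lemma \ref{crude estimate for theta}. The only cosmetic difference is that the paper bounds the source directly by $C(T)\|\theta\|_{L^\infty_{t,x}}$ whereas you keep the time integral of $\|\theta(s)\|_{L^\infty}$ and use monotonicity of $\mathfrak{Y}$; these are equivalent, and your step is sound.
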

\begin{proof}
Multiplying the equation \eqref{Radiation-NS}$_2$ of the conservation of momentum by $u(t,x)$ and integrating it over $[0,t]\times\mathbb{T}$, we have from Cauchy's inequality and the estimates \eqref{Estimate-lower-bound-v} and \eqref{conservation of energy} that
\begin{eqnarray*}
\frac 12\int_{\mathbb{T}}u^2(t,x)dx+\mu\int_{0}^{t}\int_{\mathbb{T}}\frac{u_x^2}{v}dxds
&=&\frac 12\int_{\mathbb{T}}u_0^2(x)dx+R\int_{0}^{t}\int_{\mathbb{T}}\frac{\theta u_x}{v}dxds\\
&\leq&\frac 12 \int_{\mathbb{T}}u_0^2(x)dx +\varepsilon\int_{0}^{t}\int_{\mathbb{T}}\frac{u_x^2}{v}dxds +C_\varepsilon\int_{0}^{t}\int_{\mathbb{T}}\frac{\theta^2}{v}dsds\\
&\leq&\frac 12 \int_{\mathbb{T}}u_0^2(x)dx +\varepsilon\int_{0}^{t}\int_{\mathbb{T}}\frac{u_x^2}{v}dxds+C_\varepsilon(T)\|\theta\|_{L_{t,x}^\infty}.
\end{eqnarray*}

By choosing $\varepsilon$ sufficiently small and from the estimate \eqref{Rough-estimate-theta} obtained in Lemma \ref{crude estimate for theta}, we can prove Lemma \ref{crude unclosed energy estiamte for momentum}.
\end{proof}

From the equation \eqref{Radiation-NS}$_4$ satisfied by the radiation flux $q(t,x)$, we can get the following estimate on $q(t,x)$.
\begin{lemma}\label{crude unclosed energy estimate for radiation flux}
Under the assumptions listed in Lemma \ref{a priori estimate}, there exists a positive constant $C(T)$ depending only on the initial data $(v_0(x), u_0(x), \theta_0(x))$ and $T$ such that
\begin{equation}
\left\|\frac{q_x(t)}{\sqrt{v(t)}}\right\|^2+\left\|\sqrt{v(t)}q(t)\right\|^2\leq C(T)\left(1+\mathfrak{Y}(t)^{8\mathfrak{r}}\right),\quad 0\leq t\leq T.
\end{equation}
\end{lemma}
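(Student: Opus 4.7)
My plan is to test the radiation equation $-\left(\frac{q_x}{v}\right)_x + avq + b(\theta^4)_x = 0$ against $q$ itself and integrate over the period $\mathbb{T}$. Using the periodic boundary condition, integration by parts on the first term yields $\int_{\mathbb{T}} \frac{q_x^2}{v}\,dx$, and integration by parts on the forcing term gives $b\int_{\mathbb{T}} q(\theta^4)_x\,dx = -b\int_{\mathbb{T}} q_x \theta^4\,dx$. Hence one obtains the identity
\begin{equation*}
\int_{\mathbb{T}} \frac{q_x^2}{v}\,dx + a\int_{\mathbb{T}} v q^2\,dx = b\int_{\mathbb{T}} q_x \theta^4\,dx.
\end{equation*}

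The right-hand side is then controlled by Cauchy's inequality: for any $\varepsilon>0$,
\begin{equation*}
b\int_{\mathbb{T}} q_x \theta^4\,dx \leq \varepsilon\int_{\mathbb{T}} \frac{q_x^2}{v}\,dx + C_\varepsilon \int_{\mathbb{T}} v\,\theta^8\,dx.
\end{equation*}
Choosing $\varepsilon$ small enough to absorb the first term into the left-hand side, one reduces the problem to estimating $\int_{\mathbb{T}} v\,\theta^8\,dx$.

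To bound this integral, I would write $\int_{\mathbb{T}} v\,\theta^8\,dx \leq \|v(t)\|_{L^\infty}\|\theta(t)\|_{L^\infty}^7 \int_{\mathbb{T}}\theta(t,x)\,dx$. The $L^1$-integral of $\theta(t,\cdot)$ is bounded by a constant depending only on the initial data via the conservation of energy \eqref{conservation of energy} combined with the nonnegativity of $\theta$. The remaining factors are controlled by the rough estimates already established: Lemma \ref{crude estimate for theta} gives $\|\theta(t)\|_{L^\infty} \leq C(T)(1+\mathfrak{Y}(t)^{1/(2\beta+3)})$ and Lemma \ref{crude estimate for v} gives $\|v(t)\|_{L^\infty} \leq C(T)(1+\mathfrak{Y}(t)^{1/(2\beta+3)})$. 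Multiplying these together yields
\begin{equation*}
\int_{\mathbb{T}} v\,\theta^8\,dx \leq C(T)\left(1+\mathfrak{Y}(t)^{1/(2\beta+3)}\right)^{8} \leq C(T)\left(1+\mathfrak{Y}(t)^{8/(2\beta+3)}\right),
\end{equation*}
which is exactly the target growth rate.

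Combining these steps, and using that $v$ is bounded below by Lemma \ref{lower bound for specific volume} so that $\int_{\mathbb{T}} vq^2\,dx \sim \|\sqrt{v(t)}q(t)\|^2$ and $\int_{\mathbb{T}} q_x^2/v\,dx \sim \|q_x(t)/\sqrt{v(t)}\|^2$, yields the claimed bound. There is no serious obstacle here: the argument is essentially a single weighted energy estimate, and the only care required is that the exponent $8$ on $\theta$ arising from applying Young's inequality (splitting $\theta^4 = \theta^4$, squaring gives $\theta^8$) matches the exponent $8/(2\beta+3)$ appearing in the statement, which confirms the choice of weight is sharp for this test function.
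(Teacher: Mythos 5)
Your proposal is correct and follows essentially the same argument as the paper: test the radiation equation with $q$, integrate by parts over $\mathbb{T}$, absorb $b\int_{\mathbb{T}}q_x\theta^4\,dx$ via Cauchy's inequality, and invoke the rough bound on $\|\theta(t)\|_{L^\infty}$ from Lemma \ref{crude estimate for theta}. The only cosmetic difference is in the last step: the paper bounds $\int_{\mathbb{T}}v\theta^8\,dx\leq\|\theta(t)\|_{L^\infty}^{8}\int_{\mathbb{T}}v\,dx=\|\theta(t)\|_{L^\infty}^{8}$ using \eqref{conservation of specific volume}, while you use $\|v(t)\|_{L^\infty}\|\theta(t)\|_{L^\infty}^{7}\int_{\mathbb{T}}\theta\,dx$; since $\|v(t)\|_{L^\infty}$ and $\|\theta(t)\|_{L^\infty}$ obey the same crude bound, both give the exponent $\frac{8}{2\beta+3}$.
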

\begin{proof}
Multiplying the equation \eqref{Radiation-NS}$_4$ satisfied by the radiation flux $q(t,x)$ by $q(t,x)$ and integrating it over $\mathbb{T}$, we can get from Cauchy's inequality that
\begin{eqnarray*}
\int_{\mathbb{T}}\frac{q_x^2(t,x)}{v(t,x)}dx+a\int_{\mathbb{T}}v(t,x)q^2(t,x)dx&=&b\int_{\mathbb{T}}q_x\theta^4dx\\
    &\leq&\varepsilon\int_{\mathbb{T}}\frac{q_x^2(t,x)}{v(t,x)}dx+C_\varepsilon b^2\int_{\mathbb{T}}v(t,x)\theta^8(t,x)dx\\
    &\leq&\varepsilon\int_{\mathbb{T}}\frac{q_x^2(t,x)}{v(t,x)}dx+C_\varepsilon b^2\|\theta(t)\|_{L^\infty}^8\int_{\mathbb{T}}v(t,x)dx.
\end{eqnarray*}

With the above estimate in hand, we can finish the proof of Lemma \ref{crude unclosed energy estimate for radiation flux} by
choosing $\varepsilon$ sufficiently small and by employing the estimate \eqref{conservation of specific volume} obtained in Lemma \ref{conservation of mass} and the estimate \eqref{Rough-estimate-theta} obtained in Lemma \ref{crude estimate for theta}.
\end{proof}

For the $L^2(\mathbb{R})-$estimate on the absolute temperature, we can get the following result.
\begin{lemma}\label{crude unclosed energy estimate for energy}
Under the assumptions stated in Lemma \ref{a priori estimate}, there exists a positive constant $C(T)$ depending only on the initial data $(v_0(x), u_0(x), \theta_0(x))$ and $T$ such that
\begin{equation}\label{Rough-estimate-theta-L2}
\|\theta(t)\|^2+\int_{0}^{t}\left\|\frac{\sqrt{\kappa(v(s),\theta(s))}\theta_x(s)}{\sqrt{v(s)}}\right\|^2ds\leq C(T)\left(1+\mathfrak{Y}(t)^{8\mathfrak{r}}\right),\quad 0\leq t\leq T.
\end{equation}
\end{lemma}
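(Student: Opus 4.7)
The plan is to multiply the scalar parabolic equation \eqref{Equation-theta} for the absolute temperature by $\theta$, integrate over $\mathbb{T}$, and integrate by parts once using the periodic boundary condition. This yields the identity
\begin{equation*}
\frac{R}{2(\gamma-1)}\frac{d}{dt}\int_{\mathbb{T}}\theta^2\,dx+\int_{\mathbb{T}}\frac{\kappa(v,\theta)\theta_x^2}{v}\,dx=\int_{\mathbb{T}}q\,\theta_x\,dx+\int_{\mathbb{T}}\frac{\mu u_x^2\,\theta}{v}\,dx-R\int_{\mathbb{T}}\frac{\theta^2 u_x}{v}\,dx,
\end{equation*}
so the coercive terms $\|\theta(t)\|^2$ and $\int_0^t\|\sqrt{\kappa/v}\,\theta_x\|^2\,ds$ appear on the left. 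The three right-hand contributions will be absorbed by Cauchy--Schwarz together with the crude estimates already collected in Lemmas \ref{crude estimate for theta}--\ref{crude unclosed energy estimate for radiation flux}.

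Concretely, the radiative term $\int q\,\theta_x\,dx$ is split as the product of $\sqrt{v/\kappa}\,q$ and $\sqrt{\kappa/v}\,\theta_x$, so that a small multiple of the dissipation $\int\kappa\theta_x^2/v\,dx$ is absorbed into the left-hand side, leaving (via $\kappa\ge\kappa_1$) a remainder bounded by $C\|\sqrt{v}q(t)\|^2$, which is exactly the quantity controlled by Lemma \ref{crude unclosed energy estimate for radiation flux}. The viscous heating is treated by the pointwise bound
$$\int_{\mathbb{T}}\frac{\mu u_x^2\theta}{v}\,dx\le\mu\|\theta(t)\|_{L^\infty}\int_{\mathbb{T}}\frac{u_x^2}{v}\,dx,$$
while the pressure-work term satisfies
$$\left|R\int_{\mathbb{T}}\frac{\theta^2 u_x}{v}\,dx\right|\le R\|\theta(t)\|_{L^\infty}\left\|\frac{u_x}{\sqrt{v}}\right\|\left\|\frac{\theta}{\sqrt{v}}\right\|,$$
so it can be bounded by a product of a factor controlled by Lemma \ref{crude estimate for theta}, the dissipation already available from Lemma \ref{crude unclosed energy estiamte for momentum}, and a multiple of $\|\theta(s)\|^2$ that Gronwall will absorb. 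Integrating in time over $[0,t]$ and inserting the crude $L^\infty$ bounds on $\theta$ and $v$ from Lemmas \ref{crude estimate for theta} and \ref{crude estimate for v} then closes the estimate after one final application of Gronwall's inequality.

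The main technical point is the bookkeeping of the powers of $\mathfrak{Y}(t)$: the radiative source $\|\sqrt{v}q\|^2$ already forces the rate $\mathfrak{Y}(t)^{8/(2\beta+3)}$, and one must check that the viscous and pressure-work contributions do not grow faster. Since $\|\theta(t)\|_{L^\infty}$ and $\|v(t)\|_{L^\infty}$ each cost only $\mathfrak{Y}(t)^{1/(2\beta+3)}$, the additional factors introduced by these terms sit well below the $8/(2\beta+3)$ threshold when $\beta$ is large, and the resulting bound matches exactly the right-hand side of \eqref{Rough-estimate-theta-L2}. The nonlocal nature of the radiation flux is the real obstacle here, and it is precisely the integration by parts $-\int q_x\theta\,dx=\int q\theta_x\,dx$ that lets us pair $q$ against the dissipation on the left rather than having to control $q_x$ directly against $\theta$.
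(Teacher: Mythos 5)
Your proposal is correct and follows the paper's own proof in all essentials: multiply the $\theta$-equation by $\theta$, integrate by parts, split $\int q\,\theta_x\,dx$ by Cauchy--Schwarz so that a small multiple of the dissipation is absorbed and the remainder $C\int vq^2\,dx$ is controlled by Lemma \ref{crude unclosed energy estimate for radiation flux} (this is where the dominant power $\mathfrak{Y}(t)^{\frac{8}{2\beta+3}}$ comes from), and bound the viscous heating by $\|\theta\|_{L^\infty_{t,x}}\int_0^t\|u_x/\sqrt{v}\|^2ds$. The only place you deviate is the pressure-work term: the paper estimates $\int_0^t\int_{\mathbb{T}}\theta^2|u_x|/v\,dxds$ directly by $\|\theta\|_{L^\infty_{t,x}}^{3/2}\bigl(\int_0^t\|\theta(s)\|_{L^1}ds\bigr)^{1/2}\bigl(\int_0^t\|u_x/\sqrt{v}\|^2ds\bigr)^{1/2}$, using the conserved bound on $\|\theta(t)\|_{L^1}$ from Lemma \ref{conservation of mass}, so it needs no Gronwall and costs $\mathfrak{Y}^{\frac{2}{2\beta+3}}$, whereas you use Cauchy--Schwarz in $x$ plus a Gronwall absorption of $\|\theta(s)\|^2$. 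That route is fine, but only if you first apply Young's inequality so that the coefficient multiplying $\|\theta(s)\|^2$ is a constant: then the quadratic remainder $\|\theta\|_{L^\infty}^2\|u_x/\sqrt{v}\|^2$ integrates to at most $C(T)\bigl(1+\mathfrak{Y}(t)^{\frac{3}{2\beta+3}}\bigr)$, still below the $\frac{8}{2\beta+3}$ threshold (for every $\beta>0$, not just large $\beta$), and Gronwall only contributes a harmless factor $e^{CT}$. If instead you were to keep the factor $\|\theta(s)\|_{L^\infty}\|u_x(s)/\sqrt{v(s)}\|$ as the Gronwall coefficient, its time integral is of order $\mathfrak{Y}(t)^{\frac{3}{4\beta+6}}$ and Gronwall would return an exponential in a power of $\mathfrak{Y}$, which does not imply the stated polynomial bound and would break the closing Young-inequality argument later on. In fact Gronwall can be dispensed with entirely by noting $\|\theta(s)\|^2\leq\|\theta(s)\|_{L^\infty}\|\theta(s)\|_{L^1}\leq C\|\theta(s)\|_{L^\infty}$, which is the paper's (slightly cheaper) bookkeeping.
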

\begin{proof}
From the third equation of \eqref{Radiation-NS} for the conservation of energy, it is easy to see that $\theta(t,x)$ satisfies
  \begin{equation*}
    \frac{R}{\gamma-1}\theta_t-\left(\frac{\kappa(v,\theta)\theta_x}{v}\right)_x=-q_x +\frac{\mu u_x^2}{v}-\frac{R\theta}{v}u_x.
  \end{equation*}
  Multiplying the above equation by $\theta(t,x)$ and integrating it over $[0,t]\times\mathbb{T}$, we have
  \begin{eqnarray}\label{Rough-L2-theta-identity}
    &&\frac{R}{\gamma-1}\int_{\mathbb{T}}\frac{1}{2}\theta^2(t,x)dx+\int_{0}^{t}\int_{\mathbb{T}}\frac{\kappa(v,\theta)\theta_x^2}{v}dxds\nonumber\\
    &=&\frac{R}{\gamma-1}\int_{\mathbb{T}}\frac{1}{2}\theta_0^2(x)dx+R\underbrace{\int_{0}^{t}\int_{\mathbb{T}} \frac{\theta^2}{v}u_xdxds}_{I_1}+\underbrace{\int_{0}^{t}\int_{\mathbb{T}}\theta_xqdxds}_{I_2} +\mu\underbrace{\int_{0}^{t}\int_{\mathbb{T}}\frac{\theta}{v}u_x^2dxds}_{I_3}
  \end{eqnarray}
and the last three terms in the right hand side of the above identity can be estimated as follows by using the H\"{o}lder inequality and the estimates obtained in Lemma \ref{crude estimate for theta} to Lemma \ref{crude unclosed energy estimate for radiation flux}:
  \begin{align*}
    |I_1|\leq &C(T)\|\theta\|_{L^\infty_{t,x}}^\frac{3}{2}\left(\int_{0}^{t}\|\theta(s)\|_{L^1}ds\right)^\frac{1}{2} \left(\int_{0}^{t}\left\|\frac{u_x(s)}{\sqrt{v(s)}}\right\|^2ds\right)^\frac{1}{2}\\
    \leq&C(T)\left(1+\mathfrak{Y}(t)^{\frac{3}{2}\mathfrak{r}}\right)\left(1+\mathfrak{Y}(t)^{\frac{1}{2}\mathfrak{r}}\right)\\
    \leq&C(T)\left(1+\mathfrak{Y}(t)^{2\mathfrak{r}}\right),
  \end{align*}
  \begin{align*}
    |I_2|\leq\varepsilon\int_{0}^{t}\int_{\mathbb{T}}\frac{\theta_x^2}{v}dxds+C_\varepsilon\int_{0}^{t}\int_{\mathbb{T}}vq^2dxds,
  \end{align*}
and
  \begin{align*}
    |I_3|\leq&C(T)\|\theta\|_{L^\infty_{t,x}}\int_{0}^{t}\left\|\frac{u_x(s)}{\sqrt{v(s)}}\right\|^2ds\\
    \leq&C(T)\left(1+\mathfrak{Y}(t)^{\mathfrak{r}}\right)\left(1+\mathfrak{Y}(t)^{\mathfrak{r}}\right)\\
    \leq&C(T)\left(1+\mathfrak{Y}(t)^{2\mathfrak{r}}\right).
  \end{align*}

Inserting the above estimates into \eqref{Rough-L2-theta-identity}, we can finish the proof of Lemma \ref{crude unclosed energy estimate for energy} by choosing $\varepsilon>0$ sufficiently small.
\end{proof}

Although Lemmas \ref{crude estimate for theta}-\ref{crude unclosed energy estimate for energy} give some rough estimates on
$\|(v(t),\theta(t))\|_{L^\infty}$, $\|u_x(t)\|_{L^\infty}$, $\|(v(t), u(t), \theta(t))\|$, and $\|(u_x(t),q_x(t))\|$ in terms of $\mathfrak{Y}(t)$ and $\mathfrak{Z}(t)$, these estimates are not sufficient to deduce the desired upper bounds on both the specific volume and the absolute temperature. To yield some refined estimates on these terms, we first deduce the following $L^p([0,T]; L^\infty(\mathbb{T}))-$estimate on $\theta(t,x)$ for $p\geq 1$.
\begin{lemma}\label{Lp-L-infty-theta}
Under the assumptions stated in Lemma \ref{a priori estimate}, for arbitrary $1\leq p\leq\beta+3$, there exists a positive constant $C(T)$ depending only on the initial data $(v_0(x), u_0(x), \theta_0(x))$ and $T$ such that
\begin{equation}\label{Estimate-Lp-L-infty-theta}
\int_{0}^{t}\|\theta(s)\|_{L^\infty}^pds\leq C(T)\left(1+\mathfrak{Y}(t)^{\mathfrak{R}(p)}\right),
\end{equation}
where $\mathfrak{R}(p)=\frac{8p}{(2\beta+3)(\beta+3)}$. It is worth to pointing out that if we choose $\beta$ sufficiently large, $\frac{\mathfrak{R}(p)}{p}=\frac{8}{(2\beta+3)(\beta+3)}$ is much smaller than $\mathfrak{r}=\frac{1}{2\beta+3}$. Such a fact plays an important role in our analysis.
\end{lemma}
\begin{proof}
Noticing that
\begin{equation*}
\|\theta(t)\|_{L^\infty}^{\beta+3}\leq C\left(1+\int_{\mathbb{T}}\left|\theta^{\beta+2}(t,x)\theta_x(t,x)\right|dx\right)
\end{equation*}
and by using H\"{o}lder's inequality, we have
  \begin{align*}
    \int_{0}^{t}\|\theta(s)\|_{L^\infty}^{\beta+3}ds\leq&C(T)\left(1+\int_{0}^{t}\int_{\mathbb{T}}\left|\theta^{\beta+2}(s,x)\theta_x(s,x)\right|dxds\right)\\
    \leq&C(T)\left(1+\int_{0}^{t}\|\theta(s)\|_{L^\infty}^{\frac{\beta+3}{2}} \left\|\sqrt{\frac{\theta(s)\kappa(v(s),\theta(s))}{v(s)}}\theta_x(s)\right\|_{L^1}ds\right)\\
    \leq&C(T)\left(1+\varepsilon\int_{0}^{t}\|\theta(s)\|_{L^\infty}^{\beta+3}ds +\int_{0}^{t} \left\|\sqrt{\frac{\kappa(v(s),\theta(s))}{v(s)}}\theta_x(s)\right\|^2ds\right).
  \end{align*}

We have from the estimate \eqref{Rough-estimate-theta-L2} obtained in Lemma \ref{crude unclosed energy estimate for energy} that
    \begin{align*}
    \int_{0}^{t}\|\theta(s)\|_{L^\infty}^{\beta+3}ds
    \leq&C(T)\left(1+\varepsilon\int_{0}^{t}\|\theta(s)\|_{L^\infty}^{\beta+3}ds+\mathfrak{Y}(t)^{8\mathfrak{r}}\right).
  \end{align*}

With the above estimates in hand, we can deduce \eqref{Estimate-Lp-L-infty-theta} easily by choosing $\varepsilon>0$ sufficiently small and from the H\"{o}lder inequality. This completes the proof of Lemma \ref{Lp-L-infty-theta}.
\end{proof}

Having obtained the estimate \eqref{Estimate-Lp-L-infty-theta}, we can deduce the following refined estimates on
$\|(u(t), \theta(t), u_x(t))\|$, $\|(v(t), u_x(t))\|_{L^\infty}$, and $\int^t_0\left\|\left(q_x(s)/\sqrt{v(s)},\sqrt{v(s)}q(s)
\right)\right\|^2ds$ in terms of $\mathfrak{Y}(t)$ and $\mathfrak{Z}(t)$.
\begin{lemma}\label{Refined estimates}
Under the assumptions listed in Lemma \ref{a priori estimate}, then for each $0\leq t\leq T$, there exists a positive constant $C(T)$ depending only the initial data $(v_0(x), u_0(x),\theta_0(x))$ and $T$ such that
\begin{eqnarray}
\|v(t)\|_{L^\infty}&\leq& C(T)\left(1+\mathfrak{Y}(t)^{\mathfrak{R}(1)}\right),\label{improved-esitmate-v-L-infty}\\
\|u_x(t)\|&\leq& C(T)\left(1+\mathfrak{Y}(t)^{\mathfrak{R}(\frac{1}{2})}\right)\mathfrak{Z}(t)^\frac{1}{4},\label{improved-estimate-ux-L2}\\
\|u_x(t)\|_{L^\infty}&\leq& C(T)\left(1+\mathfrak{Y}(t)^{\mathfrak{R}(\frac{3}{4})}\right)\mathfrak{Z}(t)^\frac{3}{8}, \label{improved-estimate-ux-L-infty}\\
\|u(t)\|^2+\int_{0}^{t}\left\|\frac{u_x(s)}{\sqrt{v(s)}}\right\|^2ds&\leq& C(T)\left(1+\mathfrak{Y}(t)^{\mathfrak{R}(1)}\right),\label{improved-estimate-u-L2}\\
\int_{0}^{t}\left\|\left(\frac{q_x(s)}{\sqrt{v(s)}},\sqrt{v(s)}q(s)\right)\right\|^2ds&\leq& C(T)\left(1+\mathfrak{Y}(t)^{\mathfrak{R}(8)}\right), \label{improbed-estimate-q}\\
 \|\theta(t)\|^2+\int_{0}^{t}\left\|\sqrt{\frac{\kappa(v(s),\theta(s))}{v(s)}}\theta_x(s)\right\|^2ds&\leq& C(T)\left(1+\mathfrak{Y}(t)^{\mathfrak{R}_1}\right),\label{improved-estimate-theta-L2}
\end{eqnarray}
where $\mathfrak{R}_1=\max\{\mathfrak{r}+\mathfrak{R}(1),\mathfrak{R}(8)\}$.
\end{lemma}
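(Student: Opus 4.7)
The strategy is to redo each of the estimates in Lemmas \ref{crude estimate for v}--\ref{crude unclosed energy estimate for energy} using the sharper time-integral bound of Lemma \ref{Lp-L-infty-theta} in place of the pointwise bound $\|\theta\|_{L^\infty_{t,x}}\lesssim 1+\mathfrak{Y}(t)^{1/(2\beta+3)}$ of Lemma \ref{crude estimate for theta}. In each case the same computation goes through, but at the last step, instead of pulling $\|\theta\|_{L^\infty_{t,x}}$ out of a time integral, we keep $\int_0^t \|\theta(s)\|_{L^\infty}^p\,ds$ intact and invoke \eqref{Estimate-Lp-L-infty-theta} with an appropriate $p$ and auxiliary parameter $n\geq p$.

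For \eqref{improved-esitmate-v-L-infty} I return to the representation formula \eqref{representation for specific volume} together with Lemma \ref{estimate for B and Y} to write $\|v(t)\|_{L^\infty}\leq C(T)(1+\int_0^t\|\theta(s)\|_{L^\infty}\,ds)$ and apply Lemma \ref{Lp-L-infty-theta} with $p=1$. The bounds \eqref{improved-estimate-ux-L2}--\eqref{improved-estimate-ux-L-infty} then follow from the Gagliardo--Nirenberg interpolations $\|u_x\|\lesssim\|u_{xx}\|^{1/2}\lesssim\|v\|_{L^\infty}^{1/2}\mathfrak{Z}(t)^{1/4}$ and $\|u_x\|_{L^\infty}\lesssim\|u_x\|^{1/2}\|u_{xx}\|^{1/2}$ combined with \eqref{improved-esitmate-v-L-infty}, where $\int_\mathbb{T}u_x\,dx=0$ justifies the Poincar\'e-type step. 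For \eqref{improved-estimate-u-L2} I multiply the momentum equation by $u$ as in Lemma \ref{crude unclosed energy estiamte for momentum}; the only dangerous term $R\int_0^t\int_\mathbb{T}\theta u_x/v\,dxds$ after Cauchy becomes $C_\varepsilon\int_0^t\int_\mathbb{T}\theta^2/v\,dxds\lesssim C(T)\int_0^t\|\theta(s)\|_{L^\infty}\|\theta(s)\|_{L^1}\,ds$, and by Lemma \ref{conservation of mass} and Lemma \ref{Lp-L-infty-theta} with $p=1$ this is dominated by $C(T)(1+\mathfrak{Y}(t)^{\mathfrak{T}^\mathfrak{y}_0(n,1,\beta)})$.

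For \eqref{improbed-estimate-q} I multiply the radiation flux equation by $q$ and integrate over $\mathbb{T}$ exactly as in Lemma \ref{crude unclosed energy estimate for radiation flux}, obtaining the pointwise-in-$t$ bound $\|q_x/\sqrt v\|^2+a\|\sqrt v\,q\|^2\lesssim\|v(t)\|_{L^\infty}\|\theta(t)\|_{L^\infty}^8$; integrating over $[0,t]$ and combining \eqref{improved-esitmate-v-L-infty} with Lemma \ref{Lp-L-infty-theta} at $p=8$ (which forces $n\geq 8$) yields the claim. Finally, for \eqref{improved-estimate-theta-L2} I multiply the temperature equation by $\theta$ and revisit the three interaction terms of Lemma \ref{crude unclosed energy estimate for energy}: $I_3=\mu\int_0^t\int_\mathbb{T}\theta u_x^2/v\,dxds$ is controlled by pulling out $\|\theta\|_{L^\infty_{t,x}}\lesssim 1+\mathfrak{Y}^{1/(2\beta+3)}$ and using \eqref{improved-estimate-u-L2} for the remaining $\int_0^t\|u_x/\sqrt v\|^2\,ds$; $I_1=R\int_0^t\int_\mathbb{T}\theta^2 u_x/v\,dxds$ is treated by H\"older as in Lemma \ref{crude unclosed energy estimate for energy} but combined with the refined bounds from Lemma \ref{Lp-L-infty-theta} and \eqref{improved-estimate-u-L2}; and $I_2=\int_0^t\int_\mathbb{T}\theta_xq\,dxds$ is absorbed after Cauchy into the diffusion term on the left, with the leftover $\int_0^t\|\sqrt v\,q\|^2\,ds$ factor controlled by \eqref{improbed-estimate-q}.

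The main obstacle is purely one of bookkeeping: collecting the exponents of $\mathfrak{Y}(t)$ generated by the different applications of Lemma \ref{Lp-L-infty-theta} and \eqref{improved-estimate-u-L2}, and verifying that the dominant one is no larger than the advertised exponent. The definition $\mathfrak{T}^\mathfrak{y}_1(n,\beta)=\max\{\mathfrak{T}^\mathfrak{y}_0(n,8,\beta),\,(2\beta+3)^{-1}+\mathfrak{T}^\mathfrak{y}_0(n,1,\beta)\}$ precisely records the two competing contributions: the first comes from the $q$-term $I_2$ through \eqref{improbed-estimate-q}, while the second comes from $I_1,I_3$ where one factor of $\|\theta\|_{L^\infty_{t,x}}$ from Lemma \ref{crude estimate for theta} (producing the $(2\beta+3)^{-1}$) is combined with the refined momentum bound \eqref{improved-estimate-u-L2}. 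No new analytic input beyond Lemmas \ref{crude estimate for theta}--\ref{Lp-L-infty-theta} is needed; the improvement lies entirely in replacing pointwise-in-$(t,x)$ temperature bounds by the time-integral bounds, which leaves the polynomial room in $\mathfrak{Y}$ and $\mathfrak{Z}$ that will be essential for eventually closing the upper-bound arguments.
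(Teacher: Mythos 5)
Your proposal follows essentially the same route as the paper: each of the crude estimates in Lemmas \ref{crude estimate for v}--\ref{crude unclosed energy estimate for energy} is rerun with the time-integrated bound \eqref{Estimate-Lp-L-infty-theta} in place of the pointwise bound \eqref{Rough-estimate-theta}, and your treatment of the three terms for \eqref{improved-estimate-theta-L2} coincides with the paper's (pointwise $\|\theta\|_{L^\infty_{t,x}}$ only in $I_3$, H\"older combined with \eqref{Estimate-Lp-L-infty-theta} and the refined momentum bound in $I_1$, absorption into the diffusion term plus \eqref{improbed-estimate-q} in $I_2$).

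One detail should be repaired. In your derivation of \eqref{improbed-estimate-q} you state the pointwise-in-$t$ bound $\left\|q_x/\sqrt{v}\right\|^2+a\left\|\sqrt{v}\,q\right\|^2\lesssim\|v(t)\|_{L^\infty}\|\theta(t)\|_{L^\infty}^{8}$. The factor $\|v(t)\|_{L^\infty}$ is unnecessary and, if kept, harmful: after integrating in time it forces the exponent $\mathfrak{T}^\mathfrak{y}_0(n,1,\beta)+\mathfrak{T}^\mathfrak{y}_0(n,8,\beta)$ instead of the claimed $\mathfrak{T}^\mathfrak{y}_0(n,8,\beta)$, and this inflation would propagate through $I_2$ into $\mathfrak{T}^\mathfrak{y}_1(n,\beta)$ and the later exponent bookkeeping that the restriction $\beta>\frac{157}{17}$ is calibrated to. The remedy is the one the paper uses (and which your phrase ``exactly as in Lemma \ref{crude unclosed energy estimate for radiation flux}'' presumably intends): estimate $\int_{\mathbb{T}}v\theta^8\,dx\leq\|\theta(t)\|_{L^\infty}^{8}\int_{\mathbb{T}}v\,dx=\|\theta(t)\|_{L^\infty}^{8}$ via \eqref{conservation of specific volume}, so that no factor of $\|v\|_{L^\infty}$ ever appears; then Lemma \ref{Lp-L-infty-theta} with $p=8$, $n\geq 8$ gives exactly \eqref{improbed-estimate-q}. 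With that correction your argument matches the paper's proof.
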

\begin{proof} For the case $\beta\geq 5$, noticing that
\begin{eqnarray*}
\|v(t)\|_{L^\infty}&\leq& C(T)\left(1+\int^t_0\|\theta(s)\|_{L^\infty}ds\right),\\
\|u_x(t)\|&\leq&C\|v(t)\|_{L^\infty}^\frac{1}{2}\left\|\frac{u_{xx}(t)}{v(t)}\right\|^\frac{1}{2},\\
\|u_x(t)\|_{L^\infty}&\leq&C\|u_x(t)\|^\frac{1}{2}\|v(t)\|^\frac{1}{2}\left\|\frac{u_{xx}(t)}{v(t)}\right\|^\frac{1}{2},\\
\int_0^t\int_{\mathbb{T}}\frac{\theta^2}{v}dxds &\leq&C(T)\int_{0}^{t}\|\theta(s)\|_{L^\infty}\left(\int_{\mathbb{T}}\theta(s,x)dx\right)ds\\
&\leq&C(T)\int_{0}^{t}\|\theta(s)\|_{L^\infty}ds,\\
\int_{0}^{t}\int_{\mathbb{T}}v\theta^8dxds&\leq&\int_{0}^{t}\|\theta(s)\|_{L^\infty}^8\left(\int_{\mathbb{T}}v(s,x)dx\right)ds\\
&\leq&C(T)\int_{0}^{t}\|\theta(s)\|_{L^\infty}^8ds,
\end{eqnarray*}
we can obtain the estimates \eqref{improved-esitmate-v-L-infty}, \eqref{improved-estimate-ux-L2}, \eqref{improved-estimate-ux-L-infty}, \eqref{improved-estimate-u-L2}, and \eqref{improbed-estimate-q} by repeating the arguments used in the proofs of Lemmas \ref{crude estimate for v}-\ref{crude unclosed energy estimate for radiation flux}
 and the estimate \eqref{Estimate-Lp-L-infty-theta} obtained in Lemma \ref{Lp-L-infty-theta}.

To complete the proof of Lemma \ref{Refined estimates}, we only need to prove \eqref{improved-estimate-theta-L2}. To do so, we get from the H\"{o}lder's inequality and the estimate \eqref{Estimate-Lp-L-infty-theta} obtained in Lemma \ref{Lp-L-infty-theta} that terms $I_i (i=1,2,3)$ in Lemma \ref{crude unclosed energy estimate for energy} can be estimates now as follows:
  \begin{align*}
    |I_1|\leq &C(T)\left(\int_{0}^{t}\|\theta(s)\|_{L^\infty}^3\left(\int_{\mathbb{T}}\theta(s,x)dx\right)ds\right)^\frac{1}{2} \left(\int_{0}^{t}\left\|\frac{u_x(s)}{\sqrt{v(s)}}\right\|^2ds\right)^\frac{1}{2}\\
    \leq&C(T)\left(1+\mathfrak{Y}(t)^{\mathfrak{R}(\frac{3}{2})}\right)\left(1+\mathfrak{Y}(t)^{\mathfrak{R}(\frac{1}{2})}\right)\\
    \leq&C(T)\left(1+\mathfrak{Y}(t)^{\mathfrak{R}(2)}\right),
  \end{align*}
  \begin{align*}
    |I_2|\leq\varepsilon\int_{0}^{t}\int_{\mathbb{T}}\frac{\theta_x^2}{v}dxds+C_\varepsilon\int_{0}^{t}\int_{\mathbb{T}}vq^2dxds,
  \end{align*}
and
  \begin{align*}
    |I_3|\leq&C(T)\|\theta\|_{L^\infty_{t,x}}\int_{0}^{t}\left\|\frac{u_x(s)}{\sqrt{v(s)}}\right\|^2ds\\
    \leq&C(T)\left(1+\mathfrak{Y}(t)^{\mathfrak{r}}\right)\left(1+\mathfrak{Y}(t)^{\mathfrak{R}(1)}\right)\\
    \leq&C(T)\left(1+\mathfrak{Y}(t)^{\mathfrak{r}+\mathfrak{R}(1)}\right).
  \end{align*}
With these estimates in hand, \eqref{improved-estimate-theta-L2} can be obtained easily. This completes the proof of Lemma \ref{Refined estimates}.
\end{proof}

Based on the above estimates, w now turn to deduce certain estimates on $\|v_x(t)\|$ and $\|u_t(t)\|$ in terms of
$\mathfrak{X}(t)$ and $\mathfrak{Y}(t)$. For the estimate of $\|v_x(t)\|$, we have
\begin{lemma}\label{estimate for vx}
Under the assumptions listed in Lemma \ref{a priori estimate}, we have for arbitrary $n\geq 8$ and $0\leq t\leq T$ that
  \begin{equation}\label{estimate-vx-L2}
    \|v_x(t)\|\leq C(T)\left(1+\mathfrak{Y}(t)^{\frac{\mathfrak{R}_1}{2}+\mathfrak{R}(\frac{1}{2})}\right).
  \end{equation}
Here $C(T)$ is some positive constant which depends only on the initial data $(v_0(x), u_0(x), \theta_0(x))$ and $T$.
\end{lemma}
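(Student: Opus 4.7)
\textbf{Proof proposal for Lemma \ref{estimate for vx}.} The strategy is to exploit the Kanel'--Kazhikhov identity \eqref{Kanel-identity}, namely $\left(\mu v_x/v\right)_t = u_t + p_x$, in order to derive an $L^2$ energy estimate for an effective gradient that eliminates the awkward $u_t$ term. Set
\[
w(t,x) := \mu\,\frac{v_x(t,x)}{v(t,x)} - u(t,x),
\]
so that \eqref{Kanel-identity} becomes the first-order-in-time equation $w_t = p_x = R\theta_x/v - R\theta v_x/v^2$. The key algebraic observation is that the troublesome term $R\theta v_x/v^2$ can be rewritten through $\mu v_x/v = w + u$ as $(R\theta/(\mu v))(w+u)$, producing a linear-in-$w$ dissipation.

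Multiplying $w_t = p_x$ by $w$, integrating over $\mathbb{T}$, and substituting this identity gives
\[
\frac{1}{2}\frac{d}{dt}\|w(t)\|^{2} + \frac{R}{\mu}\int_{\mathbb{T}}\frac{\theta\,w^{2}}{v}\,dx = \int_{\mathbb{T}}\frac{R\,w\,\theta_x}{v}\,dx - \frac{R}{\mu}\int_{\mathbb{T}}\frac{\theta\,w\,u}{v}\,dx.
\]
Cauchy--Schwarz and Young's inequality bound the first right-hand side term by $\varepsilon\|w\|^{2} + C_{\varepsilon}\|\theta_x/v\|^{2}$ and the second by $\frac{R}{2\mu}\int\theta w^{2}/v\,dx + \frac{R}{2\mu}\int\theta u^{2}/v\,dx$. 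After absorbing the latter into the dissipation, the resulting differential inequality
\[
\frac{d}{dt}\|w(t)\|^{2} \leq 2\varepsilon\|w\|^{2} + C\|\theta_x/v\|^{2} + C\|\theta\|_{L^{\infty}}\|u\|^{2}
\]
combined with Gronwall (for small $\varepsilon$) yields
\[
\|w(t)\|^{2} \leq C(T)\Big(1 + \int_{0}^{t}\|\theta_x/v(s)\|^{2}\,ds + \int_{0}^{t}\|\theta(s)\|_{L^{\infty}}\|u(s)\|^{2}\,ds\Big).
\]

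Both time integrals are controllable by estimates already established. Using the lower bound on $v$ from Lemma \ref{lower bound for specific volume} and the elementary inequality $\kappa(v,\theta)/v \geq \kappa_{1}/\|v\|_{L^{\infty}}$, one gets $\|\theta_x/v\|^{2}\leq C\|v\|_{L^{\infty}}\|\sqrt{\kappa(v,\theta)/v}\,\theta_x\|^{2}$, which via \eqref{improved-esitmate-v-L-infty} and \eqref{improved-estimate-theta-L2} produces a controlled power of $\mathfrak{Y}(t)$. The second integral is handled by combining \eqref{improved-estimate-u-L2} with Lemma \ref{Lp-L-infty-theta}. Finally, since $v$ is pinched between positive constants from above and below, the pointwise identity $v_x = v(w+u)/\mu$ gives $\|v_x\| \leq \|v\|_{L^{\infty}}(\|w\| + \|u\|)/\mu$, transferring the bound from $\|w\|$ to $\|v_x\|$.

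The main obstacle is the quantitative book-keeping of the powers of $\mathfrak{Y}(t)$ emerging from the combined use of \eqref{improved-esitmate-v-L-infty}, \eqref{improved-estimate-u-L2}, \eqref{improved-estimate-theta-L2}, and \eqref{Estimate-Lp-L-infty-theta}: one must verify that the dominant exponent is exactly $\mathfrak{T}^\mathfrak{y}_{1}(n,\beta) = \max\{\mathfrak{T}^\mathfrak{y}_{0}(n,8,\beta),\,\tfrac{1}{2\beta+3}+\mathfrak{T}^\mathfrak{y}_{0}(n,1,\beta)\}$. This relies on taking $n$ large enough that $\mathfrak{T}^\mathfrak{y}_{0}(n,\cdot,\beta) = O(1/n)$, so that the cross-product contributions (such as $\|v\|_{L^{\infty}}\cdot\int\|\sqrt{\kappa/v}\,\theta_{x}\|^{2}\,ds$) are still majorised by the leading factor $1+\mathfrak{Y}^{\mathfrak{T}^\mathfrak{y}_{1}}$ rather than blowing past it; this is the same mechanism that made the refinements of Lemmas \ref{crude estimate for theta}--\ref{crude unclosed energy estimate for energy} into Lemma \ref{Refined estimates} possible, so no fundamentally new ingredient is required beyond careful arithmetic with the exponents.
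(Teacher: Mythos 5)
Your approach is genuinely different from the paper's. The paper differentiates the explicit representation formula \eqref{representation for specific volume} for $v(t,x)$ with respect to $x$ and then estimates each term; you instead read the Kanel' identity \eqref{Kanel-identity} as a first-order-in-time equation for $w:=\mu v_x/v - u$, derive the $L^2$ energy identity
\[
\frac{1}{2}\frac{d}{dt}\|w\|^2 + \frac{R}{\mu}\int_{\mathbb{T}}\frac{\theta w^2}{v}\,dx = R\int_{\mathbb{T}}\frac{w\theta_x}{v}\,dx - \frac{R}{\mu}\int_{\mathbb{T}}\frac{\theta w u}{v}\,dx,
\]
and close with Gronwall. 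This identity is correct, and the estimates of the two right-hand-side integrals in terms of $\int_0^t\|\theta_x/v\|^2\,ds$ and $\int_0^t\|\theta\|_{L^\infty}\|u\|^2\,ds$, hence in terms of $\mathfrak{Y}(t)^{\mathfrak{T}^\mathfrak{y}_1(n,\beta)}$, are sound (though, to avoid a spurious $\|v\|_{L^\infty}$ factor, one should bound $\|\theta_x/v\|^2=\int v^{-1}\cdot(\theta_x^2/v)\,dx\leq C(T)\kappa_1^{-1}\|\sqrt{\kappa/v}\,\theta_x\|^2$ directly using the lower bound on $v$). Up to and including the bound $\|w(t)\|\leq C(T)\bigl(1+\mathfrak{Y}(t)^{\mathfrak{T}^\mathfrak{y}_1(n,\beta)/2}\bigr)$ the argument works and is arguably cleaner than the paper's.

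The gap is in the very last sentence. You write ``since $v$ is pinched between positive constants from above and below, the pointwise identity $v_x=v(w+u)/\mu$ gives $\|v_x\|\leq\|v\|_{L^\infty}(\|w\|+\|u\|)/\mu$.'' At this point in the argument \emph{only the lower bound} on $v$ is available (Lemma \ref{lower bound for specific volume}); the upper bound on $v$ is precisely what is being proved in Lemma \ref{basic energy estimate}, which comes later and uses the present Lemma \ref{estimate for vx}. So you cannot appeal to $v$ being bounded above. The only estimate on $\|v\|_{L^\infty}$ you have at your disposal is \eqref{improved-esitmate-v-L-infty}, namely $\|v\|_{L^\infty}\leq C(T)\bigl(1+\mathfrak{Y}(t)^{\mathfrak{T}^\mathfrak{y}_0(n,1,\beta)}\bigr)$. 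Feeding that into $\|v_x\|\leq\|v\|_{L^\infty}(\|w\|+\|u\|)/\mu$ yields
\[
\|v_x(t)\|\leq C(T)\Bigl(1+\mathfrak{Y}(t)^{\mathfrak{T}^\mathfrak{y}_0(n,1,\beta)+\mathfrak{T}^\mathfrak{y}_1(n,\beta)/2}\Bigr),
\]
which has a strictly larger exponent than the $\mathfrak{T}^\mathfrak{y}_1(n,\beta)/2$ claimed in \eqref{estimate-vx-L2}. This is not merely cosmetic: the quantity $\|v_x(t)\|_{L^\infty_tL^2_x}$ propagates into the $K_{12}$ estimate in Lemma \ref{estimate for X Y} and hence into the exponent arithmetic of the Appendix, so a larger exponent here would change the threshold on $\beta$. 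To repair your argument you either need to carry the extra $\mathfrak{T}^\mathfrak{y}_0(n,1,\beta)$ through and re-verify the Appendix inequalities, or avoid the $v_x=v(w+u)/\mu$ conversion altogether—for example by working directly with the quantity $v_x/v$ (i.e.\ estimating $\|w\|$ suffices if the subsequent lemmas are rephrased in terms of $\|v_x/v\|$), which is essentially what the representation-formula route is designed to do.
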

\begin{proof} From the representation formula \eqref{representation for specific volume} for the specific volume $v(t,x)$, we can get that
  \begin{eqnarray*}
    v_x(t,x)&=&-\frac{1}{v^2(t,x)}\bigg[\frac{u_0(x)-u(t,x)}{v(t,x)}\\
    &&+\frac{1}{v^2(t,x)B(t,x)Y(t)}\left(v_{0x}(x)-\frac{R}{\mu}\int_{0}^{t}B(s,x)Y(s)(\theta_x(s,x)+\theta(t,x)(u_0(x)-u(t,x)))ds\right)\bigg],
  \end{eqnarray*}
 from which we can deduce that
  \begin{equation*}
  \begin{aligned}
    \left\|v_x(t)\right\|^2\leq& C(T)\int_{\mathbb{T}}|u_0(x)-u(t,x)|^2dx+C(T)\|v_{0x}\|^2+C(T)\int_{0}^{t}\|\theta_x(s)\|^2ds\\
    &+C(T)\int_{0}^{t}\int_{\mathbb{T}}|\theta(t,x)(u_0(x)-u(s,x))|^2dxds\\
    \leq&C(T)\Bigg(\|u_0\|^2+\|u(t)\|^2+\|v_{0x}\|^2+\|v\|_{L_{t,x}^\infty}\int_{0}^{t}\left\|\sqrt{\frac{\kappa(v(s),\theta(s))}{v(s)}}\theta_x(s)\right\|^2ds\\
    &+\|v\|_{L_{t,x}^\infty}\int_{0}^{t}\|\theta(s)\|\left\|\sqrt{\frac{\kappa(v(s),\theta(s))}{v(s)}}\theta_x(s)\right\|(\|u_0\|^2+\|u(s)\|^2)ds\Bigg).
  \end{aligned}
  \end{equation*}
Thus \eqref{estimate-vx-L2} follows directly from Lemma \ref{Refined estimates}.
\end{proof}

As to the estimate of $\|u_t(t)\|$, we have
\begin{lemma}\label{esitmate for ut uxt}
Under the conditions listed in Lemma \ref{a priori estimate}, we have for arbitrary $n\geq 1$ and $0\leq t\leq T$ that
  \begin{equation}\label{estimate-ut-L2}
    \|u_t(t)\|^2+\int_{0}^{t}\left\|\frac{u_{xt}(s)}{\sqrt{v(s)}}\right\|^2ds\leq \mathfrak{Z}(t)^{\mathfrak{R}_2}+C(T)\left(1+\mathfrak{X}(t)+\mathfrak{Y}(t)+\mathfrak{Y}(t)^{2\mathfrak{r}+\mathfrak{R}(1)}\right),
  \end{equation}
where $\mathfrak{R}_2=\frac{3}{4}\frac{1}{1-\mathfrak{R}(\frac{5}{2})}$, and $C(T)$ is a positive constant depending only on the initial data  $(v_0(x), u_0(x), \theta_0(x))$ and $T$.
\end{lemma}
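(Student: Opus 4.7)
The plan is to differentiate the momentum equation $(\ref{Radiation-NS})_2$ with respect to $t$, multiply by $u_t$, and integrate over $[0,t]\times\mathbb{T}$. Using $v_t=u_x$, the differentiated equation reads
$$u_{tt}+p_{xt}=\left(\frac{\mu u_{xt}}{v}\right)_x-\left(\frac{\mu u_x^2}{v^2}\right)_x,$$
and with $p_t=R\theta_t/v-R\theta u_x/v^2$, integration by parts in $x$ using periodicity yields
$$\frac{1}{2}\frac{d}{dt}\|u_t\|^2+\mu\left\|\frac{u_{xt}}{\sqrt{v}}\right\|^2=R\int_{\mathbb{T}}\frac{\theta_t u_{xt}}{v}dx-R\int_{\mathbb{T}}\frac{\theta u_x u_{xt}}{v^2}dx+\mu\int_{\mathbb{T}}\frac{u_x^2 u_{xt}}{v^2}dx.$$
Cauchy's inequality with a small parameter absorbs the $u_{xt}^2/v$ factors into the dissipation on the left; integrating in time, I arrive at
$$\|u_t(t)\|^2+\int_0^t\left\|\frac{u_{xt}(s)}{\sqrt{v(s)}}\right\|^2ds\leq C\|u_t(0)\|^2+C\int_0^t\int_{\mathbb{T}}\frac{\theta_t^2}{v}dxds+C\int_0^t\int_{\mathbb{T}}\frac{\theta^2 u_x^2}{v^3}dxds+C\int_0^t\int_{\mathbb{T}}\frac{u_x^4}{v^3}dxds,$$
where $\|u_t(0)\|$ is controlled by the initial data via the momentum equation at $t=0$.

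I would then bound each of the three integrals on the right. Since $\kappa(v,\theta)\geq\kappa_1>0$, the first is immediately dominated by $\mathfrak{X}(t)/\kappa_1$. For the second, combining $\|\theta\|^2_{L^\infty_{t,x}}$ from Lemma \ref{crude estimate for theta} with the refined estimate $\eqref{improved-estimate-u-L2}$ of Lemma \ref{Refined estimates} gives a bound of size $C(T)(1+\mathfrak{Y}^{\mathfrak{U}^\mathfrak{y}_{11}(n,\beta)})$, exactly matching the exponent $\mathfrak{U}^\mathfrak{y}_{11}(n,\beta)=2/(2\beta+3)+\mathfrak{T}^\mathfrak{y}_0(n,1,\beta)$ stated in the lemma.

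For the critical third integral, I would use the Sobolev interpolation $\|u_x\|_{L^\infty}^2\leq C\|u_x\|\|u_{xx}\|$ (available because $u_x$ has zero mean) together with the refined estimates $\eqref{improved-estimate-ux-L-infty}$ and $\eqref{improved-estimate-u-L2}$ to obtain
$$\int_0^t\int_{\mathbb{T}}\frac{u_x^4}{v^3}dxds\leq C(T)\sup_{s\in[0,t]}\|u_x(s)\|_{L^\infty}^2\int_0^t\|u_x(s)/\sqrt{v(s)}\|^2ds\leq C(T)\left(1+\mathfrak{Y}^{\mathfrak{T}^\mathfrak{y}_0(n,5/2,\beta)}\right)\mathfrak{Z}(t)^{3/4},$$
where the label $5/2$ tracks the combined weight $2\cdot\tfrac{3}{4}+1$ of the two invoked bounds. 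Young's inequality with conjugate exponents $1/(1-\mathfrak{T}^\mathfrak{y}_0(n,5/2,\beta))$ and $1/\mathfrak{T}^\mathfrak{y}_0(n,5/2,\beta)$ then splits the product as
$$\mathfrak{Y}^{\mathfrak{T}^\mathfrak{y}_0(n,5/2,\beta)}\mathfrak{Z}^{3/4}\leq\mathfrak{Z}^{\mathfrak{U}^\mathfrak{z}_{11}(n,\beta)}+C\mathfrak{Y},$$
and assembling all contributions yields the claimed inequality.

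The main obstacle is producing the preliminary estimate on $\int_0^t\int u_x^4/v^3\,dxds$ with precisely the exponent $\mathfrak{T}^\mathfrak{y}_0(n,5/2,\beta)$ on $\mathfrak{Y}$: a cruder bound would leave a $\mathfrak{Y}^s$ residue with $s>1$ after Young's inequality and fail to close in the stated form. This forces a careful balance in the Sobolev interpolation for $\|u_x\|_{L^\infty}$, and is where the refined estimates of Lemma \ref{Refined estimates}, rather than their cruder counterparts in Lemmas \ref{crude estimate for v}--\ref{crude estimate for ux}, are essential.
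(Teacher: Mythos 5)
Your proposal is correct and follows essentially the same route as the paper: differentiate the momentum equation in $t$, test against $u_t$, absorb the cross terms via Cauchy--Schwarz, bound the resulting integrals $\int\theta_t^2/v$, $\int\theta^2 u_x^2/v^3$, and $\int u_x^4/v^3$ through $\mathfrak{X}$, the crude $L^\infty$ bound on $\theta$ combined with \eqref{improved-estimate-u-L2}, and \eqref{improved-estimate-ux-L-infty} combined with \eqref{improved-estimate-u-L2}, respectively, and then split the remaining $\mathfrak{Y}^{\mathfrak{T}^\mathfrak{y}_0(n,5/2,\beta)}\mathfrak{Z}^{3/4}$ by Young with exponents $1/\mathfrak{T}^\mathfrak{y}_0(n,5/2,\beta)$ and $1/(1-\mathfrak{T}^\mathfrak{y}_0(n,5/2,\beta))$. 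The paper groups the terms as $J_1, J_2, J_3$ and makes the final Young step implicit in the phrase ``inserting the above estimates,'' but the ingredients and exponent bookkeeping are identical to yours.
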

\begin{proof}
  Differentiating both sides of the equation \eqref{Radiation-NS}$_2$ for the conservation of momentum with respect of $t$, multiplying the resultant equation by $u_t(t,x)$, and then integrating it over $[0,t]\times\mathbb{T}$, we have
  \begin{eqnarray}\label{ux-L2}
    &&\int_{\mathbb{T}}\frac{1}{2}u_t^2(t,x)dx+\mu\int_{0}^{t}\int_{\mathbb{T}}\frac{u_{xt}^2}{v}dxds\nonumber\\
    &=&\int_{\mathbb{T}}\frac{1}{2}u_{0t}^2(x)dx+\mu\underbrace{\int_{0}^{t}\int_{\mathbb{T}}\frac{u_x^2u_{xt}}{v}dxds}_{J_1}\\
    &&+R\underbrace{\int_{0}^{t}\int_{\mathbb{T}}\frac{\theta_tu_{xt}}{v}dxds}_{J_2} -R\underbrace{\int_{0}^{t}\int_{\mathbb{T}}\frac{\theta u_xu_{xt}}{v^2}dxds}_{J_3}.\nonumber
  \end{eqnarray}
The last three terms in the right hand side of the above identity can be controlled as follows by using H\"{o}lder's inequality and the estimates obtained in Lemma \ref{Refined estimates}:
  \begin{align*}
    |J_1|\leq&\varepsilon\int_{0}^{t}\int_{\mathbb{T}}\frac{u_{xt}^2}{v}dxds +C_\varepsilon\int_{0}^{t}\int_{\mathbb{T}}\frac{u_x^4}{v}dxds\\
    \leq&\varepsilon\int_{0}^{t}\int_{\mathbb{T}}\frac{u_{xt}^2}{v}dxds +C_\varepsilon\|u_x\|_{L_{t,x}^\infty}^2\int_{0}^{t}\left\|\frac{u_x(s)}{\sqrt{v(s)}}\right\|^2ds\\
    \leq&\varepsilon\int_{0}^{t}\int_{\mathbb{T}}\frac{u_{xt}^2}{v}dxds +C_\varepsilon(T)\left(1+\mathfrak{Y}(t)^{\mathfrak{R}(\frac{3}{2})}\right) \mathfrak{Z}(t)^\frac{3}{4}\left(1+\mathfrak{Y}(t)^{\mathfrak{R}(1)}\right)\\
    \leq&\varepsilon\int_{0}^{t}\int_{\mathbb{T}}\frac{u_{xt}^2}{v}dxds +C_\varepsilon(T)\left(1+\mathfrak{Y}(t)^{\mathfrak{R}(\frac{5}{2})}\right)\mathfrak{Z}(t)^\frac{3}{4},
  \end{align*}
  \begin{align*}
    |J_2|\leq&\varepsilon\int_{0}^{t}\int_{\mathbb{T}}\frac{u_{xt}^2}{v}dxds +C_\varepsilon\int_{0}^{t}\int_{\mathbb{T}}\frac{\theta_t^2}{v}dxds\\
    \leq&\varepsilon\int_{0}^{t}\int_{\mathbb{T}}\frac{u_{xt}^2}{v}dxds+C_\varepsilon \mathfrak{X}(t),
  \end{align*}
and
  \begin{align*}
    |J_3|\leq&\varepsilon\int_{0}^{t}\int_{\mathbb{T}}\frac{u_{xt}^2}{v}dxds +C_\varepsilon\int_{0}^{t}\int_{\mathbb{T}}\frac{\theta^2u_x^2}{v}dxds\\
    \leq&\varepsilon\int_{0}^{t}\int_{\mathbb{T}}\frac{u_{xt}^2}{v}dxds +C_\varepsilon\|\theta\|_{L_{t,x}^\infty}^2\int_{0}^{t}\left\|\frac{u_x(s)}{\sqrt{v(s)}}\right\|^2ds\\
    \leq&\varepsilon\int_{0}^{t}\int_{\mathbb{T}}\frac{u_{xt}^2}{v}dxds +C_\varepsilon(T)\left(1+\mathfrak{Y}(t)^{2\mathfrak{r}}\right)\left(1+\mathfrak{Y}(t)^{\mathfrak{R}(1)}\right)\\
    \leq&\varepsilon\int_{0}^{t}\int_{\mathbb{T}}\frac{u_{xt}^2}{v}dxds +C_\varepsilon(T)\left(1+\mathfrak{Y}(t)^{2\mathfrak{r}+\mathfrak{R}(1)}\right).
  \end{align*}

Inserting the above estimates into \eqref{ux-L2} yields \eqref{estimate-ut-L2}. This completes the proof of Lemma \ref{esitmate for ut uxt}.
\end{proof}

Now we turn to derive the upper bounds on both the specific volume and the absolute temperature. For this purpose, we need to find the relations between the auxiliary functions $\mathfrak{X}(t)$, $\mathfrak{Y}(t)$, and $\mathfrak{Z}(t)$. Firstly, we control $\mathfrak{Z}(t)$ in terms of $\mathfrak{X}(t)$ and $\mathfrak{Y}(t)$ as in the following lemma.
\begin{lemma}\label{estimate for Z}
Under the conditions listed in Lemma \ref{a priori estimate}, we can deduce for arbitrary $n\geq 8$ and $0\leq t\leq T$ that there exists a positive constant $C(T)$ depending only on the initial data $(v_0(x), u_0(x), \theta_0(x))$ and $T$ such that
  \begin{equation}\label{estimate-Z(t)}
    \mathfrak{Z}(t)\leq C(T)\left(\mathfrak{X}(t)+\mathfrak{Y}(t)+\mathfrak{Y}(t)^{2\mathfrak{r}+\mathfrak{R}_1+\mathfrak{R}(1)}+\mathfrak{Y}(t)^{2\mathfrak{r}+\mathfrak{R}(1)}+\mathfrak{Z}(t)^{\mathfrak{R}_3}+\mathfrak{Z}(t)^{\mathfrak{R}_2}\right),
  \end{equation}
  where $\mathfrak{R}_3=\frac{3}{4}\frac{1}{1-\mathfrak{R}(\frac{5}{2})-\mathfrak{R}_1}$.
\end{lemma}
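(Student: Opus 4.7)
The starting point is to solve the momentum equation \eqref{Radiation-NS}$_{2}$ for $u_{xx}/v$. Expanding
$$
\left(\frac{\mu u_{x}}{v}\right)_{x}=\frac{\mu u_{xx}}{v}-\frac{\mu u_{x}v_{x}}{v^{2}},\qquad p_{x}=\frac{R\theta_{x}}{v}-\frac{R\theta v_{x}}{v^{2}},
$$
the equation \eqref{Radiation-NS}$_{2}$ rearranges to the pointwise identity
$$
\frac{\mu u_{xx}}{v}=u_{t}+\frac{R\theta_{x}}{v}-\frac{R\theta v_{x}}{v^{2}}+\frac{\mu u_{x}v_{x}}{v^{2}}.
$$
Squaring, dividing by $v^{2}$, integrating over $\mathbb{T}$, and invoking the positive lower bound on $v$ supplied by Lemma \ref{lower bound for specific volume}, one obtains
$$
\int_{\mathbb{T}}\frac{u_{xx}^{2}}{v^{2}}\,dx\leq C(T)\Bigl(\|u_{t}(t)\|^{2}+\int_{\mathbb{T}}\frac{\theta_{x}^{2}}{v^{2}}\,dx+\|\theta(t)\|_{L^{\infty}}^{2}\|v_{x}(t)\|^{2}+\|u_{x}(t)\|_{L^{\infty}}^{2}\|v_{x}(t)\|^{2}\Bigr).
$$

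The plan is to dominate each of the four terms on the right by quantities already controlled. Since $\kappa(v,\theta)\geq\kappa_{1}$ by \eqref{Kappa}, the second term is at most $\kappa_{1}^{-2}\mathfrak{Y}(t)$. The third term, by combining Lemma \ref{crude estimate for theta} with Lemma \ref{estimate for vx}, is at most $C(T)(1+\mathfrak{Y}^{2/(2\beta+3)})(1+\mathfrak{Y}^{\mathfrak{T}^{\mathfrak{y}}_{1}(n,\beta)})\leq C(T)(1+\mathfrak{Y}^{\mathfrak{U}^{\mathfrak{y}}_{2}(n,\beta)})$, exactly matching the definition of $\mathfrak{U}^{\mathfrak{y}}_{2}$. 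The first term $\|u_{t}(t)\|^{2}$ is supplied by Lemma \ref{esitmate for ut uxt}, contributing $\mathfrak{Z}^{\mathfrak{U}^{\mathfrak{z}}_{11}(n,\beta)}$ together with $C(T)(1+\mathfrak{X}+\mathfrak{Y}+\mathfrak{Y}^{\mathfrak{U}^{\mathfrak{y}}_{11}(n,\beta)})$.

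The fourth term $\|u_{x}\|_{L^{\infty}}^{2}\|v_{x}\|^{2}$ is the most delicate. Combining the refined bound $\|u_{x}(t)\|_{L^{\infty}}\leq C(T)(1+\mathfrak{Y}^{\mathfrak{T}^{\mathfrak{y}}_{0}(n,\frac{3}{4},\beta)})\mathfrak{Z}^{3/8}$ of Lemma \ref{Refined estimates} with Lemma \ref{estimate for vx}, and exploiting the linear scaling in the second argument so that $2\mathfrak{T}^{\mathfrak{y}}_{0}(n,\tfrac{3}{4},\beta)=\mathfrak{T}^{\mathfrak{y}}_{0}(n,\tfrac{3}{2},\beta)$, this term is bounded by $C(T)(1+\mathfrak{Y}^{\alpha})\mathfrak{Z}^{3/4}$ with $\alpha=\mathfrak{T}^{\mathfrak{y}}_{0}(n,\tfrac{3}{2},\beta)+\mathfrak{T}^{\mathfrak{y}}_{1}(n,\beta)$. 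An application of Young's inequality with conjugate exponents $1/\alpha$ and $1/(1-\alpha)$ then gives
$$
\mathfrak{Y}^{\alpha}\mathfrak{Z}^{3/4}\leq C\mathfrak{Y}+C\mathfrak{Z}^{\frac{3/4}{1-\alpha}}=C\mathfrak{Y}+C\mathfrak{Z}^{\mathfrak{U}^{\mathfrak{z}}_{2}(n,\beta)},
$$
which is precisely the stated form of $\mathfrak{U}^{\mathfrak{z}}_{2}$. Collecting the four bounds and taking the maximum over $s\in[0,t]$ yields \eqref{estimate-Z(t)}.

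The main obstacle in this scheme is not the algebra but the constraint that the exponents $\mathfrak{U}^{\mathfrak{z}}_{2}$ and $\mathfrak{U}^{\mathfrak{z}}_{11}$ on $\mathfrak{Z}$ remain strictly less than $1$, since only then will the bootstrap argument in the subsequent step be able to close by absorbing these terms back into the left-hand side. Requiring $\alpha<1/4$ for $\mathfrak{U}^{\mathfrak{z}}_{2}<1$ and a similar smallness of $\mathfrak{T}^{\mathfrak{y}}_{0}(n,\tfrac{5}{2},\beta)$ for $\mathfrak{U}^{\mathfrak{z}}_{11}<1$ translates, via the explicit forms of $\mathfrak{T}^{\mathfrak{y}}_{0}$ and $\mathfrak{T}^{\mathfrak{y}}_{1}$, into the hypothesis $\beta>157/17$ of Theorem \ref{main theorem}, together with an appropriate choice of the auxiliary parameter $n$.
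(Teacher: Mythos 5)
Your proposal is correct and follows essentially the same route as the paper: solve the momentum equation for $u_{xx}/v$, bound the resulting four terms $\|u_t\|^2$, $\|\theta_x/v\|^2$, $\|\theta v_x/v^2\|^2$, $\|u_xv_x/v^2\|^2$ via Lemmas \ref{esitmate for ut uxt}, \ref{crude estimate for theta}, \ref{Refined estimates}, and \ref{estimate for vx}, together with the lower bound on $v$ and $\kappa\geq\kappa_1$. The only difference is cosmetic: you spell out the Young's inequality step that converts $\mathfrak{Y}^{\alpha}\mathfrak{Z}^{3/4}$ into $\mathfrak{Y}+\mathfrak{Z}^{\mathfrak{U}^\mathfrak{z}_2(n,\beta)}$, which the paper leaves implicit.
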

\begin{proof}
By the equation \eqref{Radiation-NS}$_2$ for the conservation of momentum, we have
  \begin{equation*}
    \frac{u_{xx}}{v}=\frac{1}{\mu}\left(u_t+R\frac{\theta_x}{v}-R\frac{\theta v_x}{v^2}+\mu\frac{u_xv_x}{v^2}\right),
  \end{equation*}
from which one can deduce that
  \begin{equation*}
    \left\|\frac{u_{xx}(t)}{v(t)}\right\|^2\leq C\left(\|u_t(t)\|^2 +\left\|\frac{\theta_x(t)}{v(t)}\right\|^2+\left\|\frac{\theta(t) v_x(t)}{v^2(t)}\right\|^2+\left\|\frac{u_x(t)v_x(t)}{v^2(t)}\right\|^2\right).
  \end{equation*}

By using Holder inequality, the estimates obtained in Lemma \ref{crude estimate for theta}, Lemma \ref{Refined estimates}, and Lemma \ref{estimate for vx}, we find that
  \begin{align*}
    \left\|\frac{\theta(t) v_x(t)}{v^2(t)}\right\|^2\leq& C(T)\|\theta(t)\|_{L^\infty}^2\|v_x(t)\|^2\\
    \leq&C(T)\left(1+\mathfrak{Y}(t)^{2\mathfrak{r}}\right)\left(1+\mathfrak{Y}(t)^{\mathfrak{R}_1+\mathfrak{R}(1)}\right)\\
    \leq&C(T)\left(1+\mathfrak{Y}(t)^{2\mathfrak{r}+\mathfrak{R}_1+\mathfrak{R}(1)}\right)
  \end{align*}
and
  \begin{align*}
    \left\|\frac{u_x(t)v_x(t)}{v^2(t)}\right\|^2\leq& C(T)\|u_x(t)\|_{L^\infty}^2\|v_x(t)\|^2\\
    \leq&C(T)\left(1+\mathfrak{Y}(t)^{\mathfrak{R}(\frac{3}{2})}\right)\mathfrak{Z}(t)^\frac{3}{4}\left(1+\mathfrak{Y}(t)^{\mathfrak{R}_1+\mathfrak{R}(1)}\right)\\
    \leq&C(T)\left(1+\mathfrak{Y}(t)^{\mathfrak{R}(\frac{5}{2}) +\mathfrak{R}_1}\right)\mathfrak{Z}(t)^\frac{3}{4},
  \end{align*}
from which we can then deduce \eqref{estimate-Z(t)} and the proof of Lemma \ref{estimate for Z} is complete.
\end{proof}

The next lemma shows that $\mathfrak{X}(t)$ and $\mathfrak{Y}(t)$ can be estimated by $\mathfrak{Z}(t)$.
\begin{lemma}\label{estimate for X Y}
Under the conditions listed in Lemma \ref{a priori estimate}, we can deduce for arbitrary $n\geq 8$ and $0\leq t\leq T$ that there exists a positive constant $C(T)$ depending only on the initial data $(v_0(x), u_0(x), \theta_0(x))$ and $T$ such that
  \begin{equation}\label{estimate-X(t)+Y(t)}
    \mathfrak{X}(t)+\mathfrak{Y}(t)\leq C(T)\left(1+\mathfrak{Y}(t)^{\mathfrak{R}_4}+\mathfrak{Z}(t)^{\mathfrak{R}_5}\right),
  \end{equation}
where
   \begin{equation}
  \begin{split}
    \mathfrak{R}_4=\max\Bigg\{&{(2\beta+2)\mathfrak{r}+\mathfrak{R}(2)}, {(\beta+1)\mathfrak{r}+\mathfrak{R}(1)+\frac{1}{2}}, {(\beta+3)\mathfrak{r}+\mathfrak{R}(2)},  {\mathfrak{r}+\mathfrak{R}\left(\frac{9}{2}\right)},\\
    &{\beta\mathfrak{r}+\mathfrak{R}(2)+3\mathfrak{R}_1}, {\frac{\beta+2}{2}\mathfrak{r}+\mathfrak{R}(1)+\frac{3\mathfrak{R}_1}{2}}, {\frac{\beta}{2}\mathfrak{r}+\mathfrak{R}\left(\frac{11}{2}\right)+\frac{3\mathfrak{R}_1}{2}}\Bigg\},
  \end{split}
  \end{equation}
and
  \begin{equation}
  \begin{split}
    \mathfrak{R}_5=\max\Bigg\{&\frac{1}{2}\frac{1}{1-\mathfrak{r}-\mathfrak{R}(\frac{1}{2})}, \frac{1}{2}\frac{1}{1-\frac{\beta+2}{2}\mathfrak{r}-\mathfrak{R}(\frac{1}{2})}, {\frac{\mathfrak{R}_2}{2}}\frac{1}{1-{(\beta+1)\mathfrak{r}-\mathfrak{R}(1)}}, \frac{3}{8}\frac{1}{1-{(\beta+1)\mathfrak{r}-\mathfrak{R}(\frac{7}{4})}},\\
    &\frac{3}{8}\frac{1}{1-\mathfrak{r}-\mathfrak{R}(\frac{7}{4})}, \frac{3}{8}\frac{1}{1-{\mathfrak{R}(\frac{3}{4})-\mathfrak{R}_1}}, \frac{3}{8}\frac{1}{1-\frac{\beta}{2}\mathfrak{r}-{\mathfrak{R}(\frac{7}{4})-\frac{3\mathfrak{R}_1}{2}}}, \frac{1}{2}\frac{1}{1-{\frac{\beta}{2}\mathfrak{r}-\mathfrak{R}(\frac{9}{2})}}\Bigg\}.
  \end{split}
  \end{equation}
\end{lemma}
\begin{proof}
As in \cite{Kawohl-JDE-1985}, we set
  \begin{equation*}
    K(v,\theta)=\int_{0}^{\theta}\frac{\kappa(v,\xi)}{v}d\xi=\frac{\kappa_1\theta}{v}+\frac{\kappa_2\theta^{\beta+1}}{\beta+1},
  \end{equation*}
  then
  \begin{align*}
    K_t(v,\theta)=&K_v(v,\theta)u_x+\frac{\kappa(v,\theta)\theta_t}{v},\\
    K_{xt}(v,\theta)=&\left(\frac{\kappa(v,\theta)\theta_x}{v}\right)_t+K_v(v,\theta)u_{xx}+K_{vv}(v,\theta)v_xu_x+\left(\frac{\kappa(v,\theta)}{v}\right)_vv_x\theta_t,\\
    K_v(v,\theta)=&-\frac{\kappa_1\theta}{v^2},\\
    K_{vv}(v,\theta)=&\frac{2\kappa_1\theta}{v^3}.
  \end{align*}

Multiplying the equation \eqref{Radiation-NS}$_3$ for the conservation of energy by $K_t$ and integrating the resulting identity over $(0,t)\times\mathbb{T}$, we have
  \begin{align*}
    &\int_{0}^{t}\int_{\mathbb{T}}\left(\frac{R}{\gamma-1}\theta_t+R\frac{\theta}{v}u_x-\mu\frac{u_x^2}{v}\right)K_t(v,\theta)dxds +\int_{0}^{t}\int_{\mathbb{T}}\frac{\kappa(v,\theta)}{v}\theta_xK_{xt}(v,\theta)dxds \\
    =&-\int_{0}^{t}\int_{\mathbb{T}}q_xK_t(v,\theta)dxds.
  \end{align*}

The above identity together with the definition of $K(v,\theta)$ yield
\begin{align}\label{estimate-X+Y}
    &\int_{\mathbb{T}}\frac{1}{2}\frac{\kappa^2(v,\theta)\theta_x^2}{v^2}dx +\int_{0}^{t}\int_{\mathbb{T}}\frac{R}{\gamma-1}\frac{\kappa(v,\theta)\theta_t^2}{v}dxds\nonumber\\
    =&\int_{\mathbb{T}}\frac{1}{2}\frac{\kappa(v_0,\theta_0)\theta_{0x}^2}{v_0^2}dx -\int_{\mathbb{T}}\mu\frac{K(v_0,\theta_0)u_{0x}^2}{v_0}dx +\frac{R}{\gamma-1}\underbrace{\int_{0}^{t}\int_{\mathbb{T}}K_v(v,\theta)u_x\theta_tdxds}_{K_1}\nonumber\\
    &+R\underbrace{\int_{0}^{t}\int_{\mathbb{T}}\frac{K_v(v,\theta)\theta u_x^2}{v}dxds}_{K_2}+R\underbrace{\int_{0}^{t}\int_{\mathbb{T}}\frac{\kappa(v,\theta)\theta\theta_tu_x}{v^2}dxds}_{K_3} +\mu\underbrace{\int_{\mathbb{T}}\frac{K(v,\theta)u_x^2}{v}dx}_{K_4}\nonumber\\
    &-\mu\underbrace{\int_{0}^{t}\int_{\mathbb{T}}\frac{2K(v,\theta)u_xu_{xt}}{v}dxds}_{K_5} +\mu\underbrace{\int_{0}^{t}\int_{\mathbb{T}}\frac{K(v,\theta)u_x^3}{v^2}dxds}_{K_6}\\
    &+\frac{R}{\gamma-1}\underbrace{\int_{0}^{t}\int_{\mathbb{T}}K_v(v,\theta)u_{x}\theta_tdxds}_{K_7} +R\underbrace{\int_{0}^{t}\int_{\mathbb{T}}K_v(v,\theta)\frac{\theta u_x^2}{v}dxds}_{K_8} +\underbrace{\int_{0}^{t}\int_{\mathbb{T}}K_v(v,\theta)u_xq_xdxds}_{K_9}\nonumber\\
    &-\mu\underbrace{\int_{0}^{t}\int_{\mathbb{T}}K_v(v,\theta)\frac{u_x^3}{v}dxds}_{K_{10}} +\underbrace{\int_{0}^{t}\int_{\mathbb{T}}\left(\frac{\kappa(v,\theta)}{v}\right)_v \frac{\kappa(v,\theta)u_x\theta_x^2}{v}dxds}_{K_{11}}\nonumber\\
    &-\underbrace{\int_{0}^{t}\int_{\mathbb{T}}\left(\frac{\kappa(v,\theta)}{v}\right)_v\frac{\kappa(v,\theta)v_x\theta_x \theta_t}{v}dxds}_{K_{12}} -\underbrace{\int_{0}^{t}\int_{\mathbb{T}}K_v(v,\theta)u_xq_xdxds}_{K_{13}} -\underbrace{\int_{0}^{t}\int_{\mathbb{T}}\frac{\kappa(v,\theta)\theta_tq_x}{v}dxds}_{K_{14}}.\nonumber
  \end{align}
The terms $K_i(i=1,\cdots,14)$ can be controlled as follows by using H\"{o}lder's inequality, Sobolev's inequalities, and the estimates obtained in Lemma \ref{Refined estimates} to Lemma \ref{esitmate for ut uxt}:
  \begin{align*}
    |K_1|\leq&C(T)\|\theta\|_{L_{t,x}^\infty}\left(\int_{0}^{t}\left\|\frac{u_x(s)}{\sqrt{v(s)}}\right\|^2ds \right)^\frac{1}{2}\left(\int_{0}^{t}\left\|\frac{\theta_t(s)}{\sqrt{v(s)}}\right\|ds\right)^\frac{1}{2}\\
    \leq&C(T)\left(1+\mathfrak{Y}(t)^{\mathfrak{r}}\right)\left(1+\mathfrak{Y}(t)^{\mathfrak{R}(\frac{1}{2})}\right)\mathfrak{Z}(t)^\frac{1}{2}\\
    \leq&C(T)\left(1+\mathfrak{Y}(t)^{\mathfrak{r}+\mathfrak{R}(\frac{1}{2})}\right)\mathfrak{Z}(t)^\frac{1}{2},
  \end{align*}
  \begin{align*}
    |K_2|\leq&C(T)\|\theta\|_{L_{t,x}^\infty}^2\int_{0}^{t}\left\|\frac{u_x(s)}{\sqrt{v(s)}}\right\|^2ds\\
    \leq&C(T)\left(1+\mathfrak{Y}(t)^{2\mathfrak{r}}\right)\left(1+\mathfrak{Y}(t)^{\mathfrak{R}(1)}\right)\\
    \leq&C(T)\left(1+\mathfrak{Y}(t)^{2\mathfrak{r}+\mathfrak{R}(1)}\right),
  \end{align*}
  \begin{align*}
    |K_3|\leq&C(T)\|\theta\|_{L_{t,x}^\infty}^{1+\frac{\beta}{2}}\left(\int_{0}^{t} \left\|\frac{u_x(s)}{\sqrt{v(s)}}\right\|^2ds\right)^\frac{1}{2} \left(\int_{0}^{t}\left\|\frac{\sqrt{\kappa(v(s),\theta(s))}\theta_t(s)}{\sqrt{v(s)}}\right\|^2ds\right)^\frac{1}{2}\\
    \leq&C(T)\left(1+\mathfrak{Y}(t)^{\frac{\beta+2}{2}\mathfrak{r}}\right)\left(1+\mathfrak{Y}(t)^{\mathfrak{R}(\frac{1}{2})}\right)\mathfrak{Z}(t)^\frac{1}{2}\\
    \leq&C(T)\left(1+\mathfrak{Y}(t)^{\frac{\beta+2}{2}\mathfrak{r}+\mathfrak{R}(\frac{1}{2})}\right)\mathfrak{Z}(t)^\frac{1}{2},
  \end{align*}
  \begin{align*}
    |K_4|\leq&C(T)\|\theta(t)\|_{L^\infty}^{\beta+1}\|u_x(t)\|^2\\
    \leq&C(T)\left(1+\mathfrak{Y}(t)^{(\beta+1)\mathfrak{r}}\right)\left(1+\mathfrak{Y}(t)^{\mathfrak{R}(1)}\right)\mathfrak{Z}(t)^\frac{1}{2}\\
    \leq&C(T)\left(1+\mathfrak{Y}(t)^{(\beta+1)\mathfrak{r}+\mathfrak{R}(1)}\right)\mathfrak{Z}(t)^\frac{1}{2},
  \end{align*}
  \begin{align*}
    |K_5|\leq&C(T)\|\theta\|_{L_{t,x}^\infty}^{\beta+1}\left(\int_{0}^{t} \left\|\frac{u_x(s)}{\sqrt{v(s)}}\right\|ds\right)^\frac{1}{2} \left(\int_{0}^{t}\left\|\frac{u_{xt}(s)}{\sqrt{v(s)}}\right\|ds\right)^\frac{1}{2}\\
    \leq&C(T)\left(1+\mathfrak{Y}(t)^{(\beta+1)\mathfrak{r}}\right)\left(1+\mathfrak{Y}(t)^{\mathfrak{R}(1)}\right)\\
    &\times\left(1+\mathfrak{X}(t)^\frac{1}{2}+\mathfrak{Y}(t)^\frac{1}{2}+\mathfrak{Y}(t)^{2\mathfrak{r}+\mathfrak{R}(1)}+\mathfrak{Z}(t)^{\frac{\mathfrak{R}_2}{2}}\right)\\
    \leq&C(T)\Big(1+\mathfrak{Y}(t)^{(\beta+1)\mathfrak{r}+\mathfrak{R}(1)}\mathfrak{X}(t)^\frac{1}{2}+\mathfrak{Y}(t)^{(\beta+1)\mathfrak{r}+\mathfrak{R}(1)+\frac{1}{2}}\\
    &+\mathfrak{Y}(t)^{(\beta+3)\mathfrak{r}+\mathfrak{R}(2)}+\mathfrak{Y}(t)^{(\beta+1)\mathfrak{r}+\mathfrak{R}(1)}\mathfrak{Z}(t)^{\frac{\mathfrak{R}_2}{2}}\Big),
  \end{align*}
  \begin{align*}
    |K_6|\leq&C(T)\|\theta\|_{L_{t,x}^\infty}^{\beta+1}\|u_x\|_{L_{t,x}^\infty} \int_{0}^{t}\left\|\frac{u_x(s)}{\sqrt{v(s)}}\right\|^2ds\\
    \leq&C(T)\left(1+\mathfrak{Y}(t)^{(\beta+1)\mathfrak{r}}\right)\left(1+\mathfrak{Y}(t)^{\mathfrak{R}(\frac{3}{4})}\right)\mathfrak{Z}(t)^\frac{3}{8}\left(1+\mathfrak{Y}(t)^{\mathfrak{R}(1)}\right)\\
    \leq&C(T)\left(1+\mathfrak{Y}(t)^{(\beta+1)\mathfrak{r}+\mathfrak{R}(\frac{7}{4})}\right)\mathfrak{Z}(t)^\frac{3}{8},
  \end{align*}
\begin{align*}
    |K_7|\leq&C(T)\|\theta\|_{L_{t,x}^\infty}\left(\int_{0}^{t}\left\|\frac{\theta_t(s)}{\sqrt{v(s)}}\right\|^2ds\right)^\frac{1}{2} \left(\int_{0}^{t}\left\|\frac{u_x(s)}{\sqrt{v(s)}}\right\|^2ds\right)^\frac{1}{2}\\
    \leq&C(T)\left(1+\mathfrak{Y}(t)^{\mathfrak{r}}\right)\mathfrak{X}(t)^\frac{1}{2}\left(1+\mathfrak{Y}(t)^{\mathfrak{R}(\frac{1}{2})}\right)\\
    \leq&C(T)\left(1+\mathfrak{Y}(t)^{\mathfrak{r}+\mathfrak{R}(\frac{1}{2})}\right)\mathfrak{X}(t)^\frac{1}{2},
    \end{align*}
  \begin{align*}
    |K_8|\leq&C(T)\|\theta\|_{L_{t,x}^\infty}^2\int_{0}^{t}\left\|\frac{u_x(s)}{\sqrt{v(s)}}\right\|^2ds\\
    \leq&C(T)\left(1+\mathfrak{Y}(t)^{2\mathfrak{r}}\right)\left(1+\mathfrak{Y}(t)^{\mathfrak{R}(1)}\right)\\
    \leq&C(T)\left(1+\mathfrak{Y}(t)^{2\mathfrak{r}+\mathfrak{R}(1)}\right),
  \end{align*}
  \begin{align*}
    |K_9|\leq&C(T)\|\theta\|_{L_{t,x}^\infty}\left(\int_{0}^{t} \left\|\frac{u_x(s)}{\sqrt{v(s)}}\right\|^2ds\right)^\frac{1}{2} \left(\int_{0}^{t}\left\|\frac{q_x(s)}{\sqrt{v(s)}}\right\|^2ds\right)^\frac{1}{2}\\
    \leq&C(T)\left(1+\mathfrak{Y}(t)^{\mathfrak{r}}\right)\left(1+\mathfrak{Y}(t)^{\mathfrak{R}(\frac{1}{2})}\right)\left(1+\mathfrak{Y}(t)^{\mathfrak{R}(4)}\right)\\
    \leq&C(T)\left(1+\mathfrak{Y}(t)^{\mathfrak{r}+\mathfrak{R}(\frac{9}{2})}\right),
  \end{align*}
  \begin{align*}
    |K_{10}|\leq&C(T)\|\theta\|_{L_{t,x}^\infty}\|u_x\|_{L_{t,x}^\infty} \int_{0}^{t}\left\|\frac{u_x(s)}{\sqrt{v(s)}}\right\|^2ds\\
    \leq&C(T)\left(1+\mathfrak{Y}(t)^{\mathfrak{r}}\right)\left(1+\mathfrak{Y}(t)^{\mathfrak{R}(\frac{3}{4})}\right)\mathfrak{Z}(t)^\frac{3}{8}\left(1+\mathfrak{Y}(t)^{\mathfrak{R}(1)}\right)\\
    \leq&C(T)\left(1+\mathfrak{Y}(t)^{\mathfrak{r}+\mathfrak{R}(\frac{7}{4})}\right)\mathfrak{Z}(t)^\frac{3}{8},
  \end{align*}
  \begin{align*}
    |K_{11}|\leq&C(T)\|u_x\|_{L_{t,x}^\infty}\int_{0}^{t} \left\|\frac{\sqrt{\kappa(v(s),\theta(s))}\theta_x(s)}{\sqrt{v(s)}}\right\|^2ds\\
    \leq&C(T)\left(1+\mathfrak{Y}(t)^{\mathfrak{R}(\frac{3}{4})}\right)\mathfrak{Z}(t)^\frac{3}{8}\left(1+\mathfrak{Y}(t)^{\mathfrak{R}_1}\right)\\
    \leq&C(T)\left(1+\mathfrak{Y}(t)^{\mathfrak{R}(\frac{3}{4})+\mathfrak{R}_1}\right)\mathfrak{Z}(t)^\frac{3}{8},
  \end{align*}
  \begin{align*}
    |K_{12}|\leq&C(T)\|v\|_{L_{t,x}^\infty}^\frac{1}{4}\|\theta\|_{L_{t,x}^\infty}^\frac{\beta}{4}\|v_x\|_{L_t^\infty}\left(\int_{0}^{t}\left\|\frac{\theta_t(s)}{\sqrt{v(s)}}\right\|^2ds\right)^\frac{1}{2}\\
    &\times\left(\int_{0}^{t}\left\|\frac{\kappa(v(s),\theta(s))\theta_x(s)}{v(s)}\right\|^2ds\right)^\frac{1}{4} \left(\int_{0}^{t}\left\|\left(\frac{\kappa(v(s),\theta(s))\theta_x(s)}{v(s)}\right)_x\right\|^2ds\right)^\frac{1}{4}\\
    \leq&C(T)\|v\|_{L_{t,x}^\infty}^\frac{1}{4}\|\theta\|_{L_{t,x}^\infty}^\frac{\beta}{4}\|v_x\|_{L_t^\infty}\left(\int_{0}^{t}\left\|\frac{\theta_t(s)}{\sqrt{v(s)}}\right\|^2ds\right)^\frac{1}{2} \left(\int_{0}^{t}\left\|\frac{\sqrt{\kappa(v(s),\theta(s))}\theta_x(s)}{v(s)}\right\|^2ds\right)^\frac{1}{4}\\
    &\times\bigg[\|v\|_{L_{t,x}^\infty}^\frac{1}{4}\left(\int_{0}^{t} \left\|\frac{\theta_t(s)}{\sqrt{v(s)}}\right\|^2ds\right)^\frac{1}{4} +\|\theta\|_{L_{t,x}^\infty}^\frac{1}{2}\left(\int_{0}^{t}\left\| \frac{u_x(s)}{\sqrt{v(s)}}\right\|^2ds\right)^\frac{1}{4}\\
    &+\|v\|_{L_{t,x}^\infty}^\frac{1}{2}\left(\int_{0}^{t}\left\|\frac{q_x(s)}{\sqrt{v(s)}}\right\|^2ds\right)^\frac{1}{4} +\|u_x\|_{L_{t,x}^\infty}^\frac{1}{2}\left(\int_{0}^{t}\left\|\frac{u_x(s)}{\sqrt{v(s)}}\right\|^2ds\right)^\frac{1}{4}\bigg]\\
    \leq&C(T)\left(1+\mathfrak{Y}(t)^{\mathfrak{R}(\frac{1}{4})}\right)\left(1+\mathfrak{Y}(t)^{\frac{\beta}{4}\mathfrak{r}}\right)\left(1+\mathfrak{Y}(t)^{\frac{\mathfrak{R}_1}{2}}\right)\mathfrak{X}(t)^\frac{1}{2}\left(1+\mathfrak{Y}(t)^{\frac{\mathfrak{R}_1}{4}}\right)\\
    &\times\bigg[\left(1+\mathfrak{Y}(t)^{\mathfrak{R}(\frac{1}{4})}\right)\mathfrak{X}(t)^\frac{1}{4}+\left(1+\mathfrak{Y}(t)^{\frac{1}{2}\mathfrak{r}}\right)\left(1+\mathfrak{Y}(t)^{\mathfrak{R}(\frac{1}{4})}\right)\\
    &+\left(1+\mathfrak{Y}(t)^{\mathfrak{R}(\frac{1}{2})}\right)\left(1+\mathfrak{Y}(t)^{\mathfrak{R}(2)}\right)+\left(1+\mathfrak{Y}(t)^{\mathfrak{R}(\frac{3}{8})}\right)\mathfrak{Z}(t)^\frac{3}{16}\left(1+\mathfrak{Y}(t)^{\mathfrak{R}(\frac{1}{4})}\right)\bigg]\\
    \leq&C(T)\bigg(\mathfrak{X}(t)^\frac{3}{4}+\mathfrak{Y}(t)^{\mathfrak{R}(\frac{1}{2})+\frac{\beta}{4}\mathfrak{r}+\frac{3\mathfrak{R}_1}{4}}\mathfrak{X}(t)^\frac{3}{4}\\
    &+\mathfrak{Y}(t)^{\mathfrak{R}(\frac{1}{2})+\frac{\beta+2}{4}\mathfrak{r}+\frac{3\mathfrak{R}_1}{4}}\mathfrak{X}(t)^\frac{1}{2}+\mathfrak{Y}(t)^{\mathfrak{R}(\frac{11}{4})+\frac{\beta}{4}\mathfrak{r}+\frac{3\mathfrak{R}_1}{4}}\mathfrak{X}(t)^\frac{1}{2}\\
    &+\mathfrak{Y}(t)^{\mathfrak{R}(\frac{7}{8})+\frac{\beta}{4}\mathfrak{r}+\frac{3\mathfrak{R}_1}{4}}\mathfrak{X}(t)^\frac{1}{2}\mathfrak{Z}(t)^\frac{3}{16}\bigg),
  \end{align*}
  \begin{align*}
    |K_{13}|\leq&C(T)\|\theta\|_{L_{t,x}^\infty}\left(\int_{0}^{t}\left\|\frac{q_x(s)}{\sqrt{v(s)}}\right\|^2ds\right)^\frac{1}{2} \left(\int_{0}^{t}\left\|\frac{u_x(s)}{\sqrt{v(s)}}\right\|^2ds\right)^\frac{1}{2}\\
    \leq&C(T)\left(1+\mathfrak{Y}(t)^{\mathfrak{r}}\right)\left(1+\mathfrak{Y}(t)^{\mathfrak{R}(4)}\right)\left(1+\mathfrak{Y}(t)^{\mathfrak{R}(\frac{1}{2})}\right)\\
    \leq&C(T)\left(1+\mathfrak{Y}(t)^{\mathfrak{r}+\mathfrak{R}(\frac{9}{2})}\right),
  \end{align*}
and
  \begin{align*}
    |K_{14}|\leq&C(T)\|v\|_{L_{t,x}^\infty}^\frac{1}{2}\|\theta\|_{L_{t,x}^\infty}^\frac{\beta}{2} \left(\int_{0}^{t}\left\|\frac{q_x(s)}{\sqrt{v(s)}}\right\|^2ds\right)^\frac{1}{2} \left(\int_{0}^{t}\left\|\frac{\sqrt{\kappa(v(s),\theta(s))\theta_t(s)}}{\sqrt{v(s)}}\right\|^2ds\right)^\frac{1}{2}\\
    \leq&C(T)\left(1+\mathfrak{Y}(t)^{\mathfrak{R}(\frac{1}{2})}\right)\left(1+\mathfrak{Y}(t)^{\frac{\beta}{2}\mathfrak{r}}\right)\left(1+\mathfrak{Y}(t)^{\mathfrak{R}(4)}\right)\mathfrak{Z}(t)^\frac{1}{2}\\
    \leq&C(T)\left(1+\mathfrak{Y}(t)^{\frac{\beta}{2}\mathfrak{r}+\mathfrak{R}(\frac{9}{2})}\right)\mathfrak{Z}(t)^\frac{1}{2}.
  \end{align*}

Putting the above estimates on $K_i (i=1,2,\cdots, 14)$ into \eqref{estimate-X+Y}, we can deduce \eqref{estimate-X(t)+Y(t)}. This completes the proof of Lemma \ref{estimate for X Y}.
\end{proof}

With the above estimates in hand, we can deduce the desired estimates on the upper bounds of the specific volume $v(t,x)$ and the absolute temperature $\theta(t,x)$.
\begin{lemma}\label{basic energy estimate}
Let $\beta>13$ and assume that the conditions listed in Lemma \ref{a priori estimate} are satisfied, there exists a positive constant $C(T)$ which depends only on the initial data $(v_0(x), u_0(x), \theta_0(x))$ and $T$ such that
  \begin{equation}\label{H1-estimate}
    \|(v(t), u(t), \theta(t), q(t))\|_{1}^2+\left\|u_{xx}(t)\right\|^2+\|u_t(t)\|^2+\int_{0}^{t}\|u_{xt}(s)\|^2ds\leq C(T),\quad 0\leq t\leq T.
  \end{equation}
Furthermore,
  \begin{equation}\label{Upper bound-v-theta}
    v(t,x)\leq C(T),\quad\theta(t,x)\leq C(T)
  \end{equation}
holds for all $(t,x)\in[0,T]\times\mathbb{T}$.
\end{lemma}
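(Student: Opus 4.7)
The plan is to close the {\it a priori} estimate via a Young-inequality bootstrap among the three auxiliary functionals $\mathfrak{X}(t), \mathfrak{Y}(t), \mathfrak{Z}(t)$, using the two inequalities already established in Lemma \ref{estimate for Z} and Lemma \ref{estimate for X Y}. The former bounds $\mathfrak{Z}(t)$ by $\mathfrak{X}(t)+\mathfrak{Y}(t)$ plus fractional powers of $\mathfrak{Y}$ and $\mathfrak{Z}$, while the latter bounds $\mathfrak{X}(t)+\mathfrak{Y}(t)$ by fractional powers of $\mathfrak{Y}$ and $\mathfrak{Z}$. The expectation is that, with $\beta>\frac{157}{17}$ and $n$ chosen large, every fractional exponent appearing in these bounds will be strictly less than $1$, so that two successive applications of Young's inequality produce the closed estimate $\mathfrak{X}(t)+\mathfrak{Y}(t)+\mathfrak{Z}(t)\leq C(T)$.

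Carrying this out, I would first substitute the estimate of Lemma \ref{estimate for X Y} into the right-hand side of Lemma \ref{estimate for Z}, producing an inequality of the form
\begin{equation*}
\mathfrak{Z}(t) \leq C(T)\Bigl(1 + \mathfrak{Y}(t)^{\mathfrak{T}^\mathfrak{y}_2} + \mathfrak{Y}(t)^{\mathfrak{U}^\mathfrak{y}_2} + \mathfrak{Y}(t)^{\mathfrak{U}^\mathfrak{y}_{11}} + \mathfrak{Z}(t)^{\mathfrak{T}^\mathfrak{z}_2} + \mathfrak{Z}(t)^{\mathfrak{U}^\mathfrak{z}_2} + \mathfrak{Z}(t)^{\mathfrak{U}^\mathfrak{z}_{11}}\Bigr).
\end{equation*}
Provided each of $\mathfrak{T}^\mathfrak{z}_2, \mathfrak{U}^\mathfrak{z}_2, \mathfrak{U}^\mathfrak{z}_{11}$ is strictly less than $1$, Young's inequality absorbs the $\mathfrak{Z}$-terms to yield $\mathfrak{Z}(t)\leq C(T)(1 + \mathfrak{Y}(t)^A)$ with $A = \max\{\mathfrak{T}^\mathfrak{y}_2, \mathfrak{U}^\mathfrak{y}_2, \mathfrak{U}^\mathfrak{y}_{11}\}$. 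Re-inserting this into Lemma \ref{estimate for X Y} gives $\mathfrak{Y}(t)\leq C(T)(1+\mathfrak{Y}(t)^{\max\{\mathfrak{T}^\mathfrak{y}_2,\, A\cdot\mathfrak{T}^\mathfrak{z}_2\}})$, and provided this composite exponent is also $<1$, one final Young absorption produces $\mathfrak{Y}(t)\leq C(T)$, whence $\mathfrak{X}(t),\mathfrak{Z}(t)\leq C(T)$ as well. From this, the upper bounds on $v,\theta$ in \eqref{Upper bound-v-theta} follow directly from Lemma \ref{crude estimate for theta} and Lemma \ref{crude estimate for v}, and the remaining bounds in \eqref{H1-estimate} follow by substituting the bounded $\mathfrak{X}, \mathfrak{Y}, \mathfrak{Z}$ into Lemmas \ref{crude unclosed energy estimate for radiation flux}, \ref{Refined estimates}, \ref{estimate for vx}, and \ref{esitmate for ut uxt}, together with the identity $\mu u_{xx}/v = u_t + R\theta_x/v - R\theta v_x/v^2 + \mu u_x v_x/v^2$ rearranged from \eqref{Radiation-NS}$_2$, which controls $\|u_{xx}(t)\|$ via the $L^\infty$-bounds on $v,\theta,u_x$ and the $L^2$-bounds on $u_t,\theta_x,v_x$.

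The main obstacle is the arithmetic verification that the hypothesis $\beta > \frac{157}{17}$ is exactly the threshold needed to make the bootstrap close: one must exhibit an $n\in\mathbb{N}$ for which each of the six exponents $\mathfrak{T}^\mathfrak{y}_2, \mathfrak{T}^\mathfrak{z}_2, \mathfrak{U}^\mathfrak{y}_2, \mathfrak{U}^\mathfrak{z}_2, \mathfrak{U}^\mathfrak{y}_{11}, \mathfrak{U}^\mathfrak{z}_{11}$ is strictly less than $1$, every denominator $1-(\cdots)$ defining $\mathfrak{U}^\mathfrak{z}_2, \mathfrak{U}^\mathfrak{z}_{11}, \mathfrak{T}^\mathfrak{z}_2$ remains strictly positive, and finally the composite exponent $A\cdot\mathfrak{T}^\mathfrak{z}_2$ is also $<1$. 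Taking $n\to\infty$ stabilizes $\mathfrak{T}^\mathfrak{y}_0(n,p,\beta)$ and $\mathfrak{T}^\mathfrak{y}_1(n,\beta)$ at limits depending only on $\beta$, reducing the closure criterion to finitely many explicit inequalities in $\beta$; tracking the worst of these (expected to originate from the $K_{12}$-contribution to $\mathfrak{T}^\mathfrak{y}_2$ in Lemma \ref{estimate for X Y}, which combines $\|v_x\|_{L^2}$ with high powers of $\|\theta\|_{L^\infty}$) should sharpen to precisely $\beta>\frac{157}{17}$. Once this bookkeeping step is carried out, the rest of the argument is routine substitution and Young's inequality.
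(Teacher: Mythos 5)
Your proposal is correct and follows essentially the same route as the paper: a Young-inequality bootstrap among $\mathfrak{X}(t)$, $\mathfrak{Y}(t)$, $\mathfrak{Z}(t)$ based on Lemmas \ref{estimate for Z} and \ref{estimate for X Y}, with the exponent conditions (all six exponents strictly between $0$ and $1$ for a suitable $n$ once $\beta>\frac{157}{17}$) supplied by Lemma \ref{appendix}. The only cosmetic difference is the order of substitutions — the paper first absorbs $\mathfrak{Y}^{\mathfrak{T}^\mathfrak{y}_2}$ into the left-hand side of Lemma \ref{estimate for X Y} to get $\mathfrak{X}+\mathfrak{Y}\leq C(T)(1+\mathfrak{Z}^{\mathfrak{T}^\mathfrak{z}_2})$ and then closes a pure $\mathfrak{Z}$-inequality, whereas you first close a mixed $\mathfrak{Z}$-$\mathfrak{Y}$ inequality and then re-insert into Lemma \ref{estimate for X Y} — but both orderings close under the same exponent constraints.
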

\begin{proof}
Since $\beta>13$, we have by Lemma \ref{appendix} that $0<\mathfrak{R}_i<1(i=2,\cdots,5)$,  $0<2\mathfrak{r}+\mathfrak{R}(1)<1$, $0<2\mathfrak{r}+\mathfrak{R}_1<1$. Such a fact together with Lemma \ref{estimate for Z}, Lemma \ref{estimate for X Y} and Young's inequality imply that
  \begin{equation*}
    \mathfrak{Z}(t)\leq C(T),
  \end{equation*}
and consequently
  \begin{equation*}
    \mathfrak{X}(t)+\mathfrak{Y}(t)\leq C(T).
  \end{equation*}
This completes the proof of Lemma \ref{basic energy estimate}.
\end{proof}

To prove the estimate \eqref{A priori esitmiates-on-H2-norm} stated in Lemma \ref{a priori estimate}, we only need to estimate $\int^t_0\|\theta_{xx}(s)\|^2ds$ and $\|q_{xx}(t)\|$. The next lemma is concerned with $\int^t_0\|\theta_{xx}(s)\|^2ds$.
\begin{lemma}\label{energy estimate for thetaxx}
Under the conditions listed in Lemma \ref{a priori estimate}, there exists a positive constant $C(T)$ which depends only on the initial data $(v_0(x), u_0(x), \theta_0(x))$ and $T$ such that
  \begin{equation}\label{L2L2-estimate-theta-xx}
    \int_{0}^{t}\|\theta_{xx}(s)\|^2ds\leq C(T), \quad 0\leq t\leq T.
  \end{equation}
\end{lemma}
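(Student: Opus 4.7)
The natural starting point is to isolate $\theta_{xx}$ from the energy equation. Rewriting equation \eqref{Equation-theta} and expanding the spatial derivative of $\kappa(v,\theta)\theta_x/v$, one obtains the pointwise identity
\begin{equation*}
\frac{\kappa(v,\theta)}{v}\theta_{xx}=\frac{R}{\gamma-1}\theta_t+q_x-\frac{\mu u_x^2}{v}+\frac{R\theta}{v}u_x-\left(\frac{\kappa(v,\theta)}{v}\right)_x\theta_x.
\end{equation*}
By Lemmas \ref{lower bound for specific volume} and \ref{basic energy estimate}, $v$ has positive upper and lower bounds and $\theta$ has a positive upper bound on $[0,T]\times\mathbb{T}$, so $\kappa(v,\theta)/v\geq \kappa_1/\overline V(T)>0$. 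Hence
\begin{equation*}
|\theta_{xx}|^2\leq C(T)\Big(|\theta_t|^2+|q_x|^2+u_x^4+\theta^2 u_x^2+(v_x^2+\theta_x^2)\theta_x^2\Big),
\end{equation*}
where I used $|(\kappa(v,\theta)/v)_x|\le C(T)(|v_x|+|\theta_x|)$, which follows from the explicit form \eqref{Kappa} together with the already established $L^\infty$ bounds on $v$ and $\theta$.

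The first four contributions are routine given the machinery of the previous subsection: $\int_0^t\|\theta_t\|^2\,ds$ is controlled by $\mathfrak{X}(t)\le C(T)$ (after using the lower bound of $\kappa(v,\theta)/v$); $\int_0^t\|q_x\|^2\,ds$ comes from $\mathfrak{Y}(t)\le C(T)$ combined with Lemma \ref{crude unclosed energy estimate for radiation flux}; $\int_0^t\|\theta u_x\|^2\,ds\le\|\theta\|_{L^\infty_{t,x}}^2\int_0^t\|u_x\|^2\,ds\le C(T)$; and $\int_0^t\int u_x^4\,dx\,ds\le \int_0^t\|u_x\|_{L^\infty}^2\|u_x\|^2\,ds\le C(T)$, where $\|u_x\|_{L^\infty}$ is bounded uniformly in $t$ via Sobolev embedding and the $H^1$ bound on $u_x$ coming from $\mathfrak{Z}(t)\le C(T)$ in Lemma \ref{basic energy estimate}.

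The only subtle contribution is the quadratic-in-derivative cross term $(v_x^2+\theta_x^2)\theta_x^2$. Here I would exploit the periodicity of $\theta$ together with the one-dimensional Sobolev interpolation
\begin{equation*}
\|\theta_x(s)\|_{L^\infty}^2\leq C\bigl(\|\theta_x(s)\|^2+\|\theta_x(s)\|\|\theta_{xx}(s)\|\bigr),
\end{equation*}
which, combined with the already established uniform $L^2$ bound on $v_x$ and $\theta_x$ (from Lemma \ref{estimate for vx} and $\mathfrak{Y}(t)\le C(T)$), yields
\begin{equation*}
\int_0^t\!\!\int_{\mathbb{T}}(v_x^2+\theta_x^2)\theta_x^2\,dx\,ds\leq C(T)\int_0^t\|\theta_x(s)\|_{L^\infty}^2\,ds \leq C(T)+\varepsilon\int_0^t\|\theta_{xx}(s)\|^2\,ds+C_\varepsilon(T)\int_0^t\|\theta_x(s)\|^2\,ds.
\end{equation*}
Since $\int_0^t\|\theta_x(s)\|^2\,ds\le C(T)$ (from the dissipative term in the basic entropy estimate, after using the lower bound on $\kappa(v,\theta)/v$ and the upper bound on $\theta$), the desired bound \eqref{L2L2-estimate-theta-xx} follows by choosing $\varepsilon$ sufficiently small and absorbing the $\theta_{xx}$ term into the left-hand side.

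The main obstacle is indeed this last cross term: without the interpolation inequality and the absorption trick, the product $\|\theta_x\|_{L^\infty}^2\|v_x\|^2$ cannot be handled by the $L^\infty_t L^2_x$ bounds alone. All other ingredients are immediate consequences of the estimates already proved in Lemmas \ref{lower bound for specific volume}--\ref{basic energy estimate}.
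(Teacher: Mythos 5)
Your proof is correct and is essentially the same approach as the paper's, with one cosmetic variation: the paper tests the temperature equation \eqref{Equation-theta} against $-\theta_{xx}$ and integrates by parts in time, so that the $\theta_t\theta_{xx}$ term collapses to $\frac{d}{dt}\tfrac12\|\theta_x\|^2$, whereas you isolate $\theta_{xx}$ pointwise from the same equation and control $\int_0^t\|\theta_t\|^2\,ds$ via $\mathfrak{X}(t)\le C(T)$ from Lemma \ref{basic energy estimate}. Both routes draw on the same already-established bounds ($\mathfrak{X},\mathfrak{Y},\mathfrak{Z}\le C(T)$, lower bound on $v$, upper bounds on $v,\theta$), and both handle the delicate cross term $(\kappa/v)_x\theta_x\theta_{xx}$ by the $1$D Agmon/Gagliardo--Nirenberg interpolation $\|\theta_x\|_{L^\infty}^2\lesssim \|\theta_x\|^2+\|\theta_x\|\|\theta_{xx}\|$ followed by Young's inequality and absorption of the $\|\theta_{xx}\|^2$ contribution into the coercive left-hand side. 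If anything, your accounting of the cross term is slightly more complete: expanding $(\kappa/v)_x$ produces both a $v_x\theta_x\theta_{xx}$ and a $\theta_x^2\theta_{xx}$ contribution, and you keep both explicitly, whereas the paper's displayed decomposition records only the $v_x$ piece (its $L_2$); the $\theta_x^2\theta_{xx}$ piece is treated the same way, so this does not affect the outcome. One small inaccuracy: the bound $\int_0^t\|\theta_x(s)\|^2\,ds\le C(T)$ does not come from ``the dissipative term in the basic entropy estimate'' \eqref{entropy identity} (which, as the paper stresses, does not close for large data); it follows instead from $\sup_{0\le s\le t}\|\theta_x(s)\|^2\le C(T)\,\mathfrak{Y}(t)\le C(T)$ (using $\kappa\ge\kappa_1$ and the bounds on $v$), or equivalently from \eqref{improved-estimate-theta-L2} once $\mathfrak{Y}\le C(T)$ is known.
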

\begin{proof}
Differentiating both sides of the equation \eqref{Radiation-NS}$_2$ for the conservation of momentum, multiplying the resultant equation by $\theta_x(t,x)$, and integrating it over $[0,t]\times\mathbb{T}$, we have
  \begin{align}\label{estimate-L2L2-theta-xx}
    &\frac{R}{\gamma-1}\int_{\mathbb{T}}\frac{1}{2}\theta_x^2dx +\int_{0}^{t}\int_{\mathbb{T}}\frac{\kappa(v,\theta)\theta_{xx}^2}{v}dxds\nonumber\\
    =&\frac{R}{\gamma-1}\int_{\mathbb{T}}\frac{1}{2}\theta_{0x}^2dx +R\underbrace{\int_{0}^{t}\int_{\mathbb{T}}\frac{\theta_{xx}\theta u}{v}dxds}_{L_1} +\underbrace{\int_{0}^{t}\int_{\mathbb{T}}\frac{\kappa(v,\theta)\theta_{xx}\theta_xv_x}{v^2}dxds}_{L_2}\\
    &+\underbrace{\int_{0}^{t}\int_{\mathbb{T}}\theta_{xx}q_xdxds}_{L_3} +\mu\underbrace{\int_{0}^{t}\int_{\mathbb{T}}\frac{\theta_{xx}u_x^2}{v}dxds}_{L_4}.\nonumber
  \end{align}
The last four terms $L_i (i=1,2,3,4)$ can be estimated as follows:
  \begin{align*}
    |L_1|\leq&\varepsilon\int_{0}^{t}\int_{\mathbb{T}}\frac{\theta_{xx}^2}{v}dxds +C_\varepsilon(T)\|\theta(t)\|\|\theta_x(t)\|\|u_x(t)\|^2\\
    \leq&\varepsilon\int_{0}^{t}\int_{\mathbb{T}}\frac{\theta_{xx}^2}{v}dxds+C_\varepsilon(T),
  \end{align*}
  \begin{align*}
    |L_2|\leq&\varepsilon\int_{0}^{t}\int_{\mathbb{T}}\frac{\kappa(v,\theta)\theta_{xx}^2}{v}dxds +C_\varepsilon(T)\|v\|_{L_{t,x}^\infty}^\frac{1}{2}\|\theta\|_{L_{t,x}^\infty}^\frac{\beta}{2} \int_{0}^{t}\|\theta_x(s)\|\|\theta_{xx}(s)\|\|v_x(s)\|^2ds\\
    \leq&\varepsilon\int_{0}^{t}\int_{\mathbb{T}}\frac{\kappa(v,\theta)\theta_{xx}^2}{v}dxds +C_\varepsilon(T)\|v\|_{L_{t,x}^\infty}^\frac{1}{2}\|\theta\|_{L_{t,x}^\infty}^\frac{\beta}{2}\|\theta_x(t)\|^2\|v_x(t)\|^4,
  \end{align*}
  \begin{align*}
    |L_3|\leq&\varepsilon\int_{0}^{t}\int_{\mathbb{T}}\frac{\theta_{xx}^2}{v}dxds +C_\varepsilon(T)\|v\|_{L_{t,x}^\infty}\|q_x(t)\|^2\\
    \leq&\varepsilon\int_{0}^{t}\int_{\mathbb{T}}\frac{\theta_{xx}^2}{v}dxds+C_\varepsilon(T),
  \end{align*}
and
  \begin{align*}
    |L_4|\leq&\varepsilon\int_{0}^{t}\int_{\mathbb{T}}\frac{\theta_{xx}^2}{v}dxds+C_\varepsilon(T)\|u_x(t)\|^3\|u_{xx}(t)\|\\
    \leq&\varepsilon\int_{0}^{t}\int_{\mathbb{T}}\frac{\theta_{xx}^2}{v}dxds+C_\varepsilon(T).
  \end{align*}

Inserting the above estimates into \eqref{estimate-L2L2-theta-xx} and by choosing $\varepsilon>0$ sufficiently small, we can deduce the estimate \eqref{estimate-L2L2-theta-xx} from the estimates obtained in Lemma \ref{basic energy estimate}.
\end{proof}

For the estimate of $\|q_{xx}(t)\|$, we have
\begin{lemma}\label{energy estimate for qxx}
Under the conditions listed in Lemma \ref{a priori estimate}, there exists a positive constant $C(T)$ which depends only on the initial data $(v_0(x), u_0(x), \theta_0(x))$ and $T$ such that
\begin{equation}\label{estimate-qxx-L2}
  \left\|q_{xx}(t)\right\|^2\leq C(T),\quad 0\leq t\leq T.
  \end{equation}
\end{lemma}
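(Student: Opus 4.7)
The plan is to obtain $q_{xx}$ algebraically from the fourth equation of \eqref{Radiation-NS} and then control the resulting expression in $L^2$ by the estimates already established in Lemma \ref{basic energy estimate}, Lemma \ref{estimate for vx}, and Lemma \ref{energy estimate for thetaxx}. Expanding the equation $-(q_x/v)_x + avq + b(\theta^4)_x = 0$ yields
\begin{equation*}
q_{xx} = a v^{2} q \;+\; 4 b v \theta^{3}\theta_{x} \;+\; \frac{v_{x} q_{x}}{v}.
\end{equation*}
Taking $L^2$ of both sides and using the positive lower bound \eqref{Estimate-lower-bound-v} on $v$, together with the upper bounds \eqref{Upper bound-v-theta}, we get
\begin{equation*}
\|q_{xx}(t)\|^{2} \lec \|v(t)\|_{L^{\infty}}^{4}\|q(t)\|^{2} + \|v(t)\|_{L^{\infty}}^{2}\|\theta(t)\|_{L^{\infty}}^{6}\|\theta_{x}(t)\|^{2} + \left\|\frac{v_{x}(t)q_{x}(t)}{v(t)}\right\|^{2}.
\end{equation*}

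First I would dispense with the first two terms: $\|v\|_{L^{\infty}}$, $\|\theta\|_{L^{\infty}}$, $\|q\|$, and $\|\theta_{x}\|$ are all bounded by $C(T)$ via \eqref{H1-estimate}, \eqref{Upper bound-v-theta}. The genuinely nontrivial contribution is the third term, since $v_{x}$ is only controlled in $L^{2}$ through Lemma \ref{estimate for vx}, not in $L^\infty$. To handle it, I would bound it by $\|q_{x}(t)\|_{L^{\infty}}^{2}\|v_{x}(t)/v(t)\|^{2}$ and then estimate $\|q_{x}\|_{L^{\infty}}$ via a Gagliardo–Nirenberg inequality on $\mathbb{T}$:
\begin{equation*}
\|q_{x}(t)\|_{L^{\infty}} \lec \|q_{x}(t)\|^{1/2}\|q_{xx}(t)\|^{1/2} + \|q_{x}(t)\|.
\end{equation*}

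Combining these and invoking the already-proven bounds $\|q_{x}(t)\| \le C(T)$ and $\|v_{x}(t)\| \le C(T)$, I obtain
\begin{equation*}
\|q_{xx}(t)\|^{2} \le C(T)\bigl(1 + \|q_{xx}(t)\|\bigr).
\end{equation*}
Young's inequality then absorbs $\|q_{xx}(t)\|$ into the left-hand side and yields \eqref{estimate-qxx-L2}. The main (modest) obstacle is noticing that $\|v_x\|_{L^\infty}$ is not available and routing the argument through $\|q_x\|_{L^\infty}$ via Gagliardo–Nirenberg instead; no delicate time integration is required here because the algebraic character of the radiation equation makes the estimate pointwise in $t$.
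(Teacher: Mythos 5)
Your proposal is correct and in substance the same as the paper's argument: both obtain the bound pointwise in time from the elliptic equation for $q$, using the lower bound on $v$, the $L^\infty$ bounds on $v,\theta$, the $H^1$ bounds of Lemma \ref{basic energy estimate}, the interpolation $\|q_x\|_{L^\infty}\lesssim \|q_x\|^{1/2}\|q_{xx}\|^{1/2}+\|q_x\|$, and a final Young absorption of $\|q_{xx}\|$. The only cosmetic difference is that you solve the equation algebraically for $q_{xx}$ and take $L^2$ norms, whereas the paper tests the (differentiated) equation against $q_x$ and estimates the resulting terms $M_1,M_2,M_3$ with $\varepsilon$-Young, which amounts to the same estimates.
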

\begin{proof}
  Differentiating both sides of the equation \eqref{Radiation-NS}$_4$ satisfied by the radiation flux $q(t,x)$, multiplying the resultant equation by $q_x(t,x)$, and then integrating it with over $\mathbb{T}$, we have
  \begin{equation}\label{qxx-L2}
    \int_{\mathbb{T}}\frac{q_{xx}^2}{v}dx +\int_{\mathbb{T}}avq_x^2(dx=\underbrace{\int_{\mathbb{T}}\frac{v_xq_xq_{xx}}{v^2}dx}_{M_1} +a\underbrace{\int_{\mathbb{T}}v_xqq_xdx}_{M_2}+4b\underbrace{\int_{\mathbb{T}}q_{xx}\theta^3\theta_xdx}_{M_3}.
  \end{equation}
  Since
  \begin{align*}
    |M_1|\leq&\varepsilon\int_{\mathbb{T}}\frac{q_{xx}^2}{v}dx+C_\varepsilon(T)\|v_x(t)\|^2\|q_x(t)\|\|q_{xx}(t)\|\\
    \leq&\varepsilon\int_{\mathbb{T}}\frac{q_{xx}^2}{v}dx+C_\varepsilon(T)\|v_x(t)\|^4\|q_x(t)\|^2\\
    \leq&\varepsilon\int_{\mathbb{T}}\frac{q_{xx}^2}{v}dx+C_\varepsilon(T),
  \end{align*}
  \begin{align*}
    |M_2|\leq&\|q_x(t)\|^2+\|q(t)\|\|q_x(t)\|\|v_x(t)\|^2\\
    \leq&C(T),
  \end{align*}
and
  \begin{align*}
    |M_3|\leq&\varepsilon\int_{\mathbb{T}}\frac{q_{xx}^2}{v}dx +C_\varepsilon(T)\|\theta(t)\|_{L_{t,x}^\infty}^6\|\theta_x(t)\|^2\\
    \leq&\varepsilon\int_{\mathbb{T}}\frac{q_{xx}^2}{v}dx+C_\varepsilon(T),
  \end{align*}
we can deduce \eqref{estimate-qxx-L2} by putting the above estimate into \eqref{qxx-L2}, by choosing $\varepsilon>0$ sufficiently small, and from the estimates obtained in Lemma \ref{basic energy estimate}.
\end{proof}

From the estimates \eqref{Estimate-lower-bound-v} obtained in Lemma \ref{lower bound for specific volume}, \eqref{H1-estimate} and \eqref{Upper bound-v-theta} obtained in Lemma \ref{basic energy estimate}, \eqref{L2L2-estimate-theta-xx} established in Lemma \ref{energy estimate for thetaxx}, and \eqref{estimate-qxx-L2} derived in Lemma \ref{energy estimate for qxx}, we can deduce that to complete the proof of the {\it a priori} estimates stated in Lemma \ref{a priori estimate}, we only need to
get the positive lower bound of the absolute temperature $\theta(t,x)$ and this the main content of the next subsection.

\subsection{Estimate on the lower bound of the absolute temperature}
Now we devote ourselves to deriving the positive lower bound for the absolute temperature. Our main idea, as stated in the introduction, is to derive a pointwise estimate between $q_x(t,x)$, $v(t,x)$, and $\theta(t,x)$. To this end, we first recall the Fourier transform on one-dimensional torus $\mathbb{T}$.
\begin{definition}
For a complex-valued function $f(x)\in L^{1}\left(\mathbb{T}\right)$ and $m\in \mathbb{Z}$, we define
\begin{equation}
  \widehat{f}(m)=\int_{\mathbb{T}} f(x) e^{-2 \pi i m x} d x,
\end{equation}
where $i$ denotes the unit imaginary number. We call $\widehat{f}(m)$ the $m-$th Fourier coefficient of $f(x)$. The Fourier series of $f(x)$ at $x \in \mathbb{T}$ is the series
\begin{equation}
  \sum_{m \in \mathbb{Z}} \widehat{f}(m) e^{2 \pi i m x} .
\end{equation}
\end{definition}
For more detailed discussions and properties of Fourier transform on one-dimensional torus $\mathbb{T}$, we refer to the classical textbook \cite{Grafakos-2014} and the references cited therein.

To derive the lower bound for the absolute temperature, it is more convenient to consider the radiation hydrodynamics system with viscosity and thermal conductivity \eqref{Radiation-NS} in Euler coordinates. Denoting
\begin{equation}\label{Euler to Lagrange}
  \Psi:(t,x)\mapsto(t,y):=\left(t,\int_{\left(0,-\frac 12\right)}^{(t,x)}(udt+vdx)\right)
\end{equation}
then noticing that $\int_{\mathbb{T}}v(t,x)dx=\int_{\mathbb{T}}v_0(x)dx=1$ and by using the inverse transformation $(\Psi)^{-1}$, we can rewrite the radiation hydrodynamics system with viscosity and thermal conductivity \eqref{Radiation-NS} in Eulerian coordinates as
\begin{eqnarray}\label{radiation hydrodynamics system in Euler coordinates}
\rho_t-(\rho u)_y&=&0,\nonumber\\
(\rho u)_{t}+(\rho u+p)_{y}&=&\left(\mu u_{y}\right)_{y},\\
\left(\rho\left(e+\frac{u^{2}}{2}\right)\right)_{t}+\left(\rho u\left(e+\frac{1}{2}u^2\right)+p u\right)_{y}+q_y&=&\left(\mu uu_{y}\right)_{y}+\left(\kappa(\rho,\theta)\theta_y\right)_{y},\nonumber\\
-q_{yy}+aq+b(\theta^4)_y&=&0.\nonumber
\end{eqnarray}
Here $(t,y)\in \mathbb{R}^+\times\mathbb{T}$.

We can get the following pointwise estimate between $q_y(t,y)$ and $\theta(t,y)$.
\begin{lemma}\label{pointwise estimate for radiative flux}
Under the assumptions listed in Lemma \ref{a priori estimate}, we can deduce that
\begin{equation}\label{pointwise-qx-theta}
  \frac{\partial q(t,y)}{\partial y}\leq b|\theta(t,y)|^4,\quad (t,y)\in[0,T]\times\mathbb{T}.
\end{equation}
\end{lemma}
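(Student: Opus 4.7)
The plan is to exploit the Fourier transform on $\mathbb{T}$ just introduced in the paper to solve the linear ODE
\[
-q_{yy}+aq+b(\theta^{4})_{y}=0
\]
explicitly for $q_{y}$ in terms of $\theta^{4}$, and then read off the pointwise bound. First I would expand $q(t,\cdot)$ and $\theta^{4}(t,\cdot)$ in Fourier series on $\mathbb{T}$. Matching the $m$-th Fourier coefficients and using $\widehat{q_{yy}}(t,m)=-4\pi^{2}m^{2}\widehat{q}(t,m)$ together with $\widehat{(\theta^{4})_{y}}(t,m)=2\pi i m\,\widehat{\theta^{4}}(t,m)$ gives
\[
\widehat{q}(t,m)=-\frac{2\pi i m\,b}{a+4\pi^{2}m^{2}}\widehat{\theta^{4}}(t,m),\qquad
\widehat{q_{y}}(t,m)=\frac{4\pi^{2}m^{2}b}{a+4\pi^{2}m^{2}}\widehat{\theta^{4}}(t,m).
\]
Rewriting the symbol as $\frac{4\pi^{2}m^{2}}{a+4\pi^{2}m^{2}}=1-\frac{a}{a+4\pi^{2}m^{2}}$ and inverting the Fourier transform yields the convolution representation
\[
q_{y}(t,y)=b\,\theta^{4}(t,y)-ab\int_{\mathbb{T}}G_{a}(y-z)\,\theta^{4}(t,z)\,dz,
\]
where $G_{a}(y):=\sum_{m\in\mathbb{Z}}\frac{1}{a+4\pi^{2}m^{2}}e^{2\pi i m y}$ is the Green's function of $-\partial_{y}^{2}+a$ on $\mathbb{T}$.

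Second, I would verify that $G_{a}$ is pointwise nonnegative, which, together with $\theta^{4}\geq 0$, makes the integral in the representation nonnegative and so gives $q_{y}\leq b\theta^{4}$ at once. Solving the distributional equation $-G_{a}''+aG_{a}=\delta_{0}$ on $\bigl[-\tfrac{1}{2},\tfrac{1}{2}\bigr]$ with periodic boundary conditions produces the explicit formula
\[
G_{a}(y)=\frac{\cosh\!\bigl(\sqrt{a}(|y|-1/2)\bigr)}{2\sqrt{a}\,\sinh(\sqrt{a}/2)},\qquad y\in\left[-\tfrac{1}{2},\tfrac{1}{2}\right],
\]
which is strictly positive on $\mathbb{T}$, as required.

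The only delicate point is the positivity of $G_{a}$: the Fourier-side analysis by itself only controls the zeroth coefficient, so one genuinely needs the explicit kernel to upgrade this to a pointwise inequality. A safety net that bypasses the Green's function entirely is the following maximum-principle argument, which I would include if the explicit formula feels cumbersome. Set $F(t,y):=b\theta^{4}(t,y)-q_{y}(t,y)$ and differentiate \eqref{radiation hydrodynamics system in Euler coordinates}$_{4}$ once in $y$; after subtracting $a$ times the original equation, one finds
\[
-F_{yy}+aF=ab\,\theta^{4}\geq 0\qquad \text{on } \mathbb{T}.
\]
At any point $y_{\ast}$ where $F(t,\cdot)$ attains its minimum on $\mathbb{T}$ one has $F_{yy}(t,y_{\ast})\geq 0$, so $aF(t,y_{\ast})\geq ab\,\theta^{4}(t,y_{\ast})\geq 0$, hence $F\geq 0$ everywhere and \eqref{pointwise-qx-theta} follows.
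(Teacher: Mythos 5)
Your main argument is correct and is essentially the paper's proof: the paper also takes the Fourier transform of the radiation equation in Eulerian coordinates, writes $q_y-b\theta^4$ as the convolution of $\theta^4$ with the kernel $K(z)=\sum_{k}\frac{-ab}{4\pi^2k^2+a}e^{2\pi ikz}$ (which is exactly $-ab\,G_a$ in your notation), and concludes by showing this kernel has a sign; the only cosmetic difference is that the paper obtains the kernel by solving a two-point boundary value problem with Neumann data and writes it with exponentials and the Heaviside function, whereas you write the periodic Green's function in the equivalent $\cosh$ form, which is a slightly cleaner way to see positivity. Your ``safety net'' is a genuinely different and more elementary route: the identity $-F_{yy}+aF=ab\,\theta^4\ge 0$ for $F=b\theta^4-q_y$, combined with the minimum principle on the compact torus (note $F_{yy}=aF-ab\theta^4$ is continuous for the smooth solutions at hand, so the classical pointwise argument applies), gives the bound without computing any kernel at all, and would in fact shorten the paper's proof.
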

\begin{proof}
  Applying Fourier transform to the equation \eqref{radiation hydrodynamics system in Euler coordinates}$_4$ satisfied by the radiation flux $q(t,y)$, we can get that
  \begin{equation*}
    \hat{q}(t,k)=-\frac{2\pi bki}{4\pi^2k^2+a}\widehat{\theta^4}(t,k)
  \end{equation*}
and consequently
\begin{eqnarray}\label{Expression-qx-theta}
  \frac{\partial q(t,y)}{\partial y}-b|\theta(t,y)|^4 &=&\sum_{k=-\infty}^{\infty}\frac{-ab}{4\pi^2k^2+a}\widehat{\theta^4}(t,k)e^{2\pi iky}\\
&=&\int_{\mathbb{T}}|\theta(t,y-z)|^4\left(\sum_{k=-\infty}^{\infty}\frac{-ab}{4\pi^2k^2+a}e^{2\pi ikz}\right)dz.\nonumber
\end{eqnarray}

Setting
\begin{equation}\label{definition of K}
  K(z)\equiv\sum_{k=-\infty}^{\infty}\frac{-ab}{4\pi^2k^2+a}e^{2\pi ikz},
\end{equation}
it is easy to see that if we can prove that
$$
K(z)\leq 0
$$
holds for all $z\in\mathbb{T}$, then we can deduce that the estimate \eqref{pointwise-qx-theta} holds.

Recall that $\mathbb{T}=\left[-\frac{1}{2},\frac{1}{2}\right]$, we can get by direct computation that
\begin{equation*}
  \frac{d^2 K(z)}{dz^2}=ab\sum_{k=-\infty}^{\infty}e^{2\pi ikz}+aK(z),\quad z\in\mathbb{T}=\left[-\frac{1}{2},\frac{1}{2}\right].
\end{equation*}
Noticing also that
\begin{equation*}
  \delta(z)=\sum_{k=-\infty}^{\infty}e^{2\pi ikz}\quad z\in\mathbb{T}=\left[-\frac{1}{2},\frac{1}{2}\right]
\end{equation*}
holds for the Dirac function $\delta(z)$, we arrive at
\begin{equation}\label{differential equation for K}
  \frac{d^2 K(z)}{dz^2}=ab\delta(z)+aK(z),\quad z\in\mathbb{T}=\left[-\frac{1}{2},\frac{1}{2}\right].
\end{equation}
On the other hand, it is easy to see that
\begin{equation*}
  K'\left(\frac{1}{2}\right)=K'\left(-\frac{1}{2}\right)=2\pi iab\sum_{k=-\infty}^{\infty}\frac{(-1)^{k+1}k}{4\pi^2k^2+a}.
\end{equation*}
Since
\begin{align*}
  2\pi iab\sum_{k=-\infty}^{\infty}\frac{(-1)^{k+1}k}{4\pi^2k^2+a}=&2\pi iab\sum_{k=0}^{\infty}\frac{(-1)^{k+1}k}{4\pi^2k^2+a}+2\pi iab\sum_{k=-\infty}^{0}\frac{(-1)^{k+1}k}{4\pi^2k^2+a}\\
  =&2\pi iab\sum_{k=0}^{\infty}\frac{(-1)^{k+1}k}{4\pi^2k^2+a}-2\pi iab\sum_{k=0}^{\infty}\frac{(-1)^{-k+1}k}{4\pi^2k^2+a}\\
  =&0,
\end{align*}
where we have used the fact that each of the above series is convergent, we can deduce that $K(z)$ satisfies the following two point boundary value problem
\begin{eqnarray}\label{BP-K(z)}
  &&\frac{d^2 K(z)}{dz^2}=ab\delta(z)+aK(z),\quad z\in\mathbb{T}=\left[-\frac{1}{2},\frac{1}{2}\right],\nonumber\\
  &&K'\left(-\frac{1}{2}\right)=K'\left(\frac{1}{2}\right)=0.
\end{eqnarray}

Solving the two point boundary problem \eqref{BP-K(z)}, one gets that
\begin{equation}\label{K(z)}
  K(z)=e^{-\sqrt{a} z} \left(\frac{1}{2} \sqrt{a} b H (-z)-\frac{\sqrt{a} e^{\sqrt{a}} b}{2 \left(e^{\sqrt{a}}-1\right)}\right)+e^{\sqrt{a} z} \left(\frac{1}{2} \sqrt{a} b H (z)-\frac{\sqrt{a} e^{\sqrt{a}} b}{2 \left(e^{\sqrt{a}}-1\right)}\right),
\end{equation}
where $H(z)$ denotes the Heaviside function.

It is easy to check that $K(z)\leq0$ holds for all $z\in \mathbb{T}=\left[-\frac{1}{2},\frac{1}{2}\right]$. Thus we complete the proof of Lemma \ref{pointwise estimate for radiative flux}.
\begin{figure}[htbp]
\centering
\includegraphics[width=0.5\textwidth]{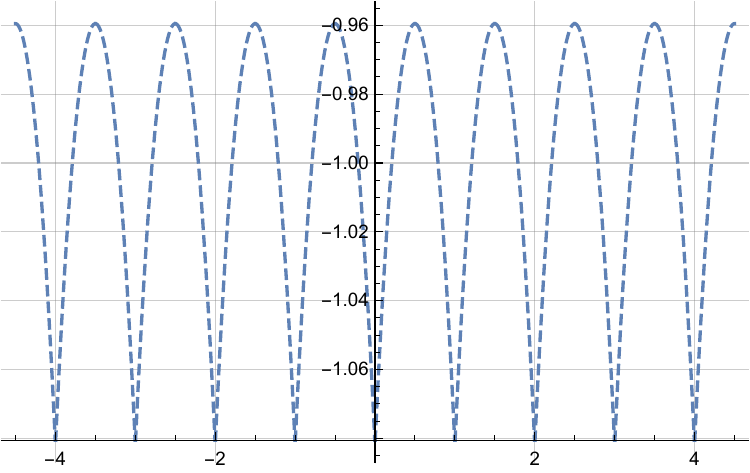}
\caption{the graph for $K(x)$ ($a=1$, $b=1$).}
\end{figure}
\end{proof}

\begin{remark}
  Using the transformation $\Psi$, we find that in the Lagrangian coordinates, $q(t,x)$ satisfies the following pointwise estimate
  \begin{equation}
  \frac{\partial q(t,x)}{\partial x}\leq bv(t,x)|\theta(t,x)|^4,\quad (t,x)\in[0,T]\times \mathbb{T}.
\end{equation}
\end{remark}

Using the above pointwise estimate, we can derive the desired positive lower bound for the absolute temperature by using the minimum principle for the second-order parabolic equations.
\begin{lemma}\label{Lower-bound-theta}
Under the assumptions listed in Lemma \ref{a priori estimate}, then for any $0\leq t\leq T$, there exists a positive constant $C(T)$ which depends only on the initial data $(v_0(x), u_0(x),\theta_0(x))$ and $T$ such that
  \begin{equation}
    \theta(t,x)\geq C(T),\quad (t,x)\in[0,T]\times\mathbb{T}.
  \end{equation}
\end{lemma}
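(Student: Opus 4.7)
The plan is to apply the maximum (comparison) principle to the equation satisfied by $1/\theta$, namely identity \eqref{Identity-temperature-radiative-NS}. Setting $w(t,x):=1/\theta(t,x)$, that identity can be rewritten in the form
$$
c_v w_t-\left(\frac{\kappa(v,\theta)}{v}w_x\right)_x+\frac{2\theta\kappa(v,\theta)}{v}w_x^2+\frac{\mu}{v\theta^2}\left(u_x-\frac{R\theta}{2\mu}\right)^2=\frac{R^2}{4\mu v}+\frac{q_x}{\theta^2}.
$$
The last two terms on the left are pointwise nonnegative, so $w$ is a subsolution of the linear parabolic operator $c_v\partial_t-\partial_x[(\kappa/v)\partial_x]$ with source equal to the right-hand side. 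Hence it suffices to show that the right-hand side is bounded by $C(T)$; then a comparison with the spatially constant supersolution $\phi(t):=\max_{x\in\mathbb{T}}(1/\theta_0(x))+(C(T)/c_v)\,t$ on the periodic strip $[0,T]\times\mathbb{T}$ yields $w(t,x)\leq\phi(T)\leq C(T)$ and therefore the desired lower bound $\theta(t,x)\geq 1/C(T)=:\underline{\Theta}(T)$.

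For the first term $R^2/(4\mu v)$, the positive lower bound on the specific volume obtained in Lemma \ref{lower bound for specific volume} gives $R^2/(4\mu v)\leq R^2/(4\mu\underline{V}(T))\leq C(T)$ directly. The central step is to control $q_x/\theta^2$, and this is where the pointwise estimate proved in Lemma \ref{pointwise estimate for radiative flux}, translated back into the Lagrangian coordinates via the remark following that lemma, becomes the key input: it provides
$$
q_x(t,x)\leq bv(t,x)|\theta(t,x)|^4,\qquad (t,x)\in[0,T]\times\mathbb{T}.
$$
Combined with the upper bounds $v\leq\overline{V}(T)$ and $\theta\leq\overline{\Theta}(T)$ already established in Lemma \ref{basic energy estimate}, this yields
$$
\frac{q_x}{\theta^2}\leq bv\theta^2\leq b\overline{V}(T)\overline{\Theta}(T)^2\leq C(T),
$$
so the right-hand side of the $w$-equation is indeed bounded by a constant $C(T)$ on $[0,T]\times\mathbb{T}$.

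To execute the comparison principle cleanly one may argue directly at an interior maximum point $(t_0,x_0)$ of $w$ on $[0,T]\times\mathbb{T}$: there $w_x=0$, $w_{xx}\leq 0$, and $w_t\geq 0$, so $((\kappa/v)w_x)_x=(\kappa/v)w_{xx}\leq 0$ and the two nonnegative terms on the left vanish or stay nonnegative, giving $c_v w_t(t_0,x_0)\leq C(T)$. Letting $W(t):=\max_{x\in\mathbb{T}}w(t,x)$ (which is Lipschitz in $t$ thanks to the regularity contained in Lemma \ref{local existence}), this pointwise inequality translates into $c_v W'(t)\leq C(T)$ a.e., hence $W(t)\leq W(0)+C(T)t/c_v\leq C(T)$ on $[0,T]$.

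The main conceptual obstacle of the whole section was not this final step but the preparation of the pointwise estimate on $q_x$ in Lemma \ref{pointwise estimate for radiative flux}, which required the explicit Fourier-series evaluation of the kernel $K(z)$ in \eqref{K(z)} and the verification of its sign on $\mathbb{T}$. Once that estimate and the upper bounds on $v$ and $\theta$ from Lemma \ref{basic energy estimate} and the lower bound on $v$ from Lemma \ref{lower bound for specific volume} are all at our disposal, the present lemma reduces to a transparent maximum-principle application to the temperature equation \eqref{Identity-temperature-radiative-NS}.
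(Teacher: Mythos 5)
Your proof is correct and follows essentially the same route as the paper: both rest on the pointwise bound $q_x\le bv|\theta|^4$ from Lemma \ref{pointwise estimate for radiative flux} (via the remark in Lagrangian coordinates), the upper bounds on $v,\theta$ from Lemma \ref{basic energy estimate}, the lower bound on $v$ from Lemma \ref{lower bound for specific volume}, and a parabolic minimum/maximum principle. The only cosmetic difference is that the paper works with $\theta$ itself, writing $-q_x+\frac{\mu u_x^2}{v}-\frac{R\theta}{v}u_x\ge -C(T)\theta$ and applying the minimum principle to $\theta e^{C(T)t}$, whereas you apply the comparison argument to $1/\theta$ through \eqref{Identity-temperature-radiative-NS} with a constant source, obtaining a reciprocal-linear rather than exponential-in-$T$ lower bound; both give a positive $\underline{\Theta}(T)$.
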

\begin{proof}
  From the equation \eqref{Radiation-NS}$_3$ for the conservation of energy, $\theta(t,x)$ satisfies
  \begin{eqnarray*}
    \frac{R}{\gamma-1}\theta_t-\left(\frac{\kappa(v,\theta)\theta_x}{v}\right)_x&=&-q_x +\frac{\mu u_x^2}{v}-\frac{R\theta}{v}u_x\\
    &\geq& -bv\theta^4-\frac{R^2\theta^2}{4\mu v}\\
    &\geq& -C(T)\frac{R\theta}{\gamma-1}.
  \end{eqnarray*}

If we set $\overline{\theta}(t,x)=\theta(t,x) e^{C(T)t}$, then
  \begin{equation*}
    \frac{R}{\gamma-1}\overline{\theta}_t-\left(\frac{\kappa(v,\theta)\overline{\theta}_x}{v}\right)_x\geq0.
  \end{equation*}

By the minimum principle of the second-order parabolic equations, we find that
  \begin{equation*}
    \inf_{(t,x)\in\mathbb{T}\times[0,T]}\overline{\theta}(t,x)\geq\inf_{x\in\mathbb{T}}\overline{\theta}(0,x) \geq\inf_{x\in\mathbb{T}}\theta_0(x).
  \end{equation*}
  Therefore,
  \begin{equation*}
    \inf_{(t,x)\in\mathbb{T}\times[0,T]}\theta(t,x)\geq \inf_{(t,x)\in\mathbb{T}\times[0,T]}\overline{\theta}(t,x)e^{-C(T)t}\geq e^{-C(T)T}\inf_{x\in\mathbb{T}}\theta_0(x).
  \end{equation*}
The proof of Lemma \ref{Lower-bound-theta} is complete.
\end{proof}

Having obtained Lemma \ref{Lower-bound-theta}, we then finish the proof of Lemma \ref{a priori estimate}.

\section{Appendix}
In this section, we show that
\begin{lemma}\label{appendix}
For arbitrary $\beta>13$, we have $0<\mathfrak{R}_i<1(i=2,\cdots,5)$, $0<2\mathfrak{r}+\mathfrak{R}(1)<1$, $0<2\mathfrak{r}+\mathfrak{R}_1<1$.
\end{lemma}
\begin{proof} It is easy to see that
\begin{eqnarray*}
0<\mathfrak{R}_2<1&\iff&\beta>\frac{1}{4}\left(-9+\sqrt{649}\right),\\
0<\mathfrak{R}_3<1&\iff&\beta>\frac{1}{4}\left(-9+\sqrt{2697}\right),\\
0<\mathfrak{R}_4<1&\iff&\beta>13,\\
0<\mathfrak{R}_5<1&\iff&\beta>\frac{1}{12}\left(-33+\sqrt{21129}\right),\\
0<2\mathfrak{r}+\mathfrak{R}(1)<1&\iff&\beta>\frac{1}{4}\left(-7+\sqrt{89}\right),\\
0<2\mathfrak{r}+\mathfrak{R}_1<1&\iff&\beta>\frac{1}{4}\left(-7+\sqrt{537}\right).
\end{eqnarray*}
Therefore for arbitrary $\beta>13$, $0<\mathfrak{R}_i<1(i=2,\cdots,5)$, $0<2\mathfrak{r}+\mathfrak{R}(1)<1$, $0<2\mathfrak{r}+\mathfrak{R}_1<1$.
This completes the proof of Lemma \ref{appendix}.
\end{proof}

\section{Acknowledgment}
The research of Huijiang Zhao is supported in part by National Natural Science Foundation of China (Grant No. 11731008). This work is also partially supported by a grant from Science and Technology Department of Hubei Province (Grant No. 2020DFH002).

 \end{document}